\newcommand{\nc}{\newcommand}
\nc{\rnc}{\renewcommand}
\nc{\nt}{\newtheorem}
\nc{\dmo}{\DeclareMathOperator}
\dmo{\diam}{diam}
\dmo{\Mod}{Mod}
\dmo{\PMF}{PMF}
\dmo{\MF}{MF}
\nc{\PL}{P\!\mathscr{L}}
\dmo{\CAT}{CAT}
\nc{\ga}{\gamma}
\nc{\de}{\delta}
\nc{\C}{\mathcal{C}} 
\nc{\F}{\mathcal{F}} 
\nc{\A}{\mathcal{A}} 
\nc{\G}{\mathcal{G}} 
\dmo{\D}{D} 
\dmo{\B}{B} 
\nc{\HH}{\mathbb{H}} 
\nc{\R}{\mathbb{R}} 
\nc{\Z}{\mathbb{Z}} 
\nc{\Qcurve}{Q_\mathrm{curve}}
\nc{\Qtrack}{Q_\mathrm{track}}
\nc{\Qforce}{Q_\mathrm{force}}
\nc{\Qfit}{Q_\mathrm{fit}}
\nc{\Qdt}{Q_\mathrm{DT}}
\nc{\Qrect}{Q_\mathrm{rect}}
\renewcommand{\P}{\mathcal{P}} 
\renewcommand{\H}{\mathcal{H}} 
\dmo{\flex}{flex}
\dmo{\slope}{slope}
\renewcommand{\int}{\operatorname{int}}
\renewcommand\split{\rightharpoonup}
\nc\fold{\mapsto}
\nc{\transverse}{\pitchfork}
\nc{\carried}{\prec}
\nc{\fullycarried}{\mathrlap{\, \, \, \,\,\cdot}\carried}
\dmo{\hull}{\mathbf{r}}
\nc{\wt}{\widetilde}
\nc{\wh}{\widehat}
\dmo{\width}{width}
\dmo{\height}{height}
\Crefname{theorem}{Theorem}{Theorems}
\newtheorem*{main}{Main Theorem}
\Crefname{lemma}{Lemma}{Lemmas}
\Crefname{corollary}{Corollary}{Corollaries}
\Crefname{proposition}{Proposition}{Propositions}
\nc{\p}[1]{\medskip\paragraph{{\em #1}}}
\nc{\margin}[1]{\marginpar{\scriptsize #1}}
\author{Dan Margalit, Bal\'azs Strenner, Samuel J. Taylor, S. \"Oyk\"u Yurtta\c{s}}
\thanks{This material is based upon work supported by the the National Science Foundation under Grant Nos. DMS-1057874, DMS-1811941 DMS-2102018, and DMS-2203431.  We also acknowledge support through a Sloan Research Fellowship and a Fulbright Senior Scholar Award.}
\title{Quadratic-time computations for pseudo-Anosov mapping classes}
\begin{document}

\maketitle

\begin{abstract}
We give a quadratic-time algorithm to compute the stretch factor and the invariant measured foliations for a pseudo-Anosov element of the mapping class group.  As input, the algorithm accepts a word (in any given finite generating set for the mapping class group) representing a pseudo-Anosov mapping class, and the length of the word is our measure of complexity for the input.  The output is a train track and an integer matrix where the stretch factor is the largest real eigenvalue and the unstable foliation is given by the corresponding eigenvector.  This is the first algorithm to compute stretch factors and measured foliations that is known to terminate in sub-exponential time.   
\end{abstract}



\section{Introduction}

In his famous survey paper from 1982, ``Three dimensional manifolds, Kleinian groups and hyperbolic geometry,'' William Thurston laid out a series of 24 problems that would shape the field of low-dimensional topology for the ensuing decades \cite{Thurston}.  Problem~20 is:
\begin{quote}
\emph{Develop a computer program to calculate the canonical form for a general diffeomorphism of a surface, and to calculate the action of the group of diffeomorphisms on $\PL_0(S)$. Use this to implement an algorithm for the word problem and the conjugacy problem in the group of isotopy classes of diffeomorphisms of a surface.}
\end{quote}
Here, $\PL_0(S)$ is the space of (compactly supported) projective measured laminations on a surface $S$.  One reason for Thurston's interest in this question is revealed by his Problem~21: \emph{Develop a computer program to calculate hyperbolic structures on 3-manifolds.} Because of the central place of the theory of surfaces in low-dimensional topology, Problem~20 has importance in its own right.

Significant progress has been made on Thurston's problem (see below), but it has not been completely solved; there are algorithms to compute the canonical form in general, but not computer programs that work for all compact surfaces.  In this paper we give, for any compact surface, an easily-implementable and efficient algorithm for computing the canonical form in the case where the diffeomorphism is assumed to be of a certain type (namely, pseudo-Anosov).

To state our main result requires some setup.  Let $S=S_{g,p}$ be the surface obtained from the connected sum of $g$ tori by deleting $p$ points; the complexity of $S$ is $\xi(S) = 3g+p-3$. The Nielsen--Thurston classification theorem says that the elements of the mapping class group $\Mod(S)$ fall into three categories: periodic, reducible, and pseudo-Anosov, and that the last category is exclusive from the other two.  To a pseudo-Anosov element $f$ there are two pieces of data associated: a pair of measured foliations on $S$ and a real number $\lambda > 1$.  These are called the stable and unstable foliations of $f$ and the stretch factor.  Any foliation in $S$ can be represented by a train track $\tau$ in $S$ along with a vector in the vector space $V(\tau)$ spanned by the branches of $\tau$.  

A \emph{Nielsen--Thurston package} for $f \in \Mod(S)$ is a pair $(\tau,D)$, where $\tau$ is train track in $S$ and $D$ is a rational matrix that represents a linear map of $V(\tau)$, that has the stretch factor of $f$ as its largest real eigenvalue, and that has the unstable foliation of $f$ as a corresponding positive eigenvector.  

\begin{main}
Fix a surface $S=S_{g,p}$ and let $\Gamma$ be any finite generating set for $\Mod(S)$.  There is an explicit algorithm whose input is any word $w$ in $\Gamma$ that represents a pseudo-Anosov element of $\Mod(S)$, whose output is a Nielsen--Thurston package $(\tau,D)$  for that element, and whose running time is quadratic in the length of $w$ and polynomial time in the complexity $\xi(S)$.
\end{main}

This main theorem is stated more explicitly as Corollary~\ref{cor:main} below. We consider this result to be a step towards solving Thurston's problem in general.  Our algorithm is in the spirit of Thurston's problem: we directly compute (part of) the action of a mapping class on the space of measured foliations on a surface $S$, which is canonically isomorphic to $\PL_0(S)$.

There is no other algorithm for computing stretch factors and foliations whose running time is known to be faster than exponential.  Our paper does not address the issue of computing the eigendata for the matrix $D$, since there are many well-known (and fast) algorithms for this.

We can imagine many possible applications for our algorithm, for instance in determining the smallest stretch factor in $\Mod(S_g)$ for small $g$.  Equally important are the various tools we develop in the paper.  For example, with our theory of slopes (Section~\ref{section:slope}) and our forcing lemma (Proposition~\ref{prop:forcing}) we give a new criterion for a curve to be carried by a train track.  We also expect that our work can be used toward a fast algorithm for the conjugacy problem in the mapping class group.  

\p{Computing model and running time.} To say that an algorithm runs in quadratic time (as opposed to polynomial time) requires us to specify our model. We work in the bit model, that is, we count bit operations on a deterministic multitape Turing machine (and we refer to the number of bit operations as the run time).  Arithmetic is charged at bit cost: adding two \(t\)-bit integers costs \(O(t)\) bit operations, and multiplying a \(t\)-bit integer by an \(s\)-bit integer costs \(O(ts)\) bit operations (schoolbook multiplication).  No unit-cost arithmetic is assumed.

Let $n$ be the length of the word $w$ in the main theorem and let $\xi = \xi(S)$. The heart of our algorithm is to multiply together $O\!\bigl(\xi^2n\bigr)$ square matrices. The size of each matrix is $O\!\bigl(\xi\bigr)$ and the entries have fixed bit size. This takes $O\!\bigl(n^2 \xi^d \bigr)$ bit operations for some $d \leq 8$. The algorithm also uses $O\!\bigl(\xi\bigr)$ applications of the Zassenhaus algorithm whose running time is $O\!\bigl(n^2\xi^d \bigr)$ for some $d \leq 9$.

%
%

\p{The idea of our algorithm} The group $\Mod(S)$ acts in a piecewise-linear fashion on $\MF(S)$, the space of measured foliations on $S$.  Our main technical theorem, Theorem~\ref{thm:main}, says that there is a number $Q=Q(S)=O\!\bigl(\xi^2\bigr)$, a cell structure on $\MF(S)$, and a point $c$ in $\MF(S)$ so that for any pseudo-Anosov $f \in \Mod(S)$ the point $f^Q(c)$ lies in a region of $\MF(S)$ that has two properties: (1) it contains the unstable foliation for $f$ and (2) the action of $f$ on the region is linear.  From these two properties we may readily extract the desired Nielsen--Thurston package, as the unstable foliation is an eigenvector of the linear map from item (2).  We can compute $f^Q(c)$ and the action of $f$ at $f^Q(c)$ in quadratic time; as above, this essentially reduces to the problem of multiplying together $Qn$ matrices, where $n$ is the word length of $f$, and the entries of the matrices are of bounded size.    

\p{Prior work} Our algorithm has its origins in work of Thurston and Mosher (as exposited by Casson--Bleiler \cite[p. 146]{CassonBleilerNotes}), who gave a method for explicitly computing the action of $\Mod(S)$ on $\MF(S)$.  In his famous research announcement, Thurston suggested that under iteration by a pseudo-Anosov mapping class, points in $\MF(S)$ converge to the unstable foliation ``rather quickly.''  Our Theorem~\ref{thm:main} is an affirmation of this. 

Bell--Schleimer proved a result that, in their words, shows Thurston's suggestion to be false \cite{BellSchleimer}.  Their theorem says that, within the linear region described above, the convergence rate to the ray of unstable foliations is arbitrarily slow.  In other words, for the corresponding matrices, the absolute value of the ratio of the two eigenvalues of largest absolute value can be arbitrarily close to 1.  There is no contradiction: our result says that the sequence $c, f(c), f^2(c),\dots$ quickly arrives at the linear region containing the unstable foliation, and the Bell--Schleimer result says that, once there, the convergence of the sequence to the ray of unstable foliations can be slow.  

Other works have taken advantage of the fact that the sequence $f^k(c)$ eventually---and quickly---lands in the ``right'' linear region for $f$; see the work of Menzel--Parker \cite{MenzelParker}, Moussafir \cite{Moussafir06}, Finn--Thiffeault \cite{FinnThiffeault}, Hall (who wrote the computer program \href{http://pcwww.liv.ac.uk/maths/tobyhall/software/}{Dynn}) \cite{dynn}, the fourth-named author with Hall \cite{HallYurttas09}, and Bell (who wrote the computer programs \href{http://markcbell.co.uk/software.html#flipper-software}{flipper} and \href{http://markcbell.co.uk/software.html#curver-software}{curver}) \cite{BellThesis}.  Our Theorem~\ref{thm:main} is the first result to quantify how quickly the sequence $f^k(c)$ lands in the right linear region. 

There are also algorithms to compute stretch factors and foliations that do not use the action on $\MF(S)$ directly.  Most notably, Bestvina--Handel gave an algorithm that uses train tracks and is based on the Stallings theory of folding \cite{BH}.  The expectation is that this algorithm has exponential time complexity.  Brinkmann \cite{Brinkmann} implemented the algorithm for once-punctured surfaces in the computer program \href{https://github.com/nettoyeurny/xtrain}{Xtrain} and Hall implemented it for punctured disks in the program \href{http://pcwww.liv.ac.uk/maths/tobyhall/software/}{Trains}. 


\subsection{The main technical result} In order to state our main technical result, Theorem~\ref{thm:main}, we require some notation and setup.  As above, the surfaces in this paper are all of the form $S = S_{g,p}$.  Before we start in earnest, we give an abstract topological definition that will be used in the discussion of the space of measured foliations.  

\p{Integral cone complexes} In this paper, an integral cone complex is a sort of cell complex where a cell is a positive cone in some $\R^d$ defined as the set of solutions to a finite set of integer linear equations and integer inequalities; we will refer to such a cone as an integral linear cone.  Such a cell has a finite set of faces, each being a cell in the same sense.  Cells are glued along their faces, the gluing maps being linear.

\begin{center}
\begin{figure}[hbt!]
 \includegraphics[width=0.4\textwidth]{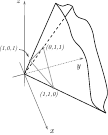}
\caption{A geometric realization of the integral cone complex $Y_\Delta$}\label{fig:ydelta}
\end{figure}
\end{center}

We now present a basic example of an integral cone complex $Y_\Delta$ that we will treat as a running example.  Let $V_1$, $V_2$, and $V_3$ each be the first quadrant in $\R^2$.  We glue the face of $V_i$ on the $x$-axis to the face of $V_{i+1}$ on the $y$-axis  (subscripts  considered modulo 3) by the map $(t,0) \mapsto (0,t)$.  The complex $Y_\Delta$ is homeomorphic to $\R^2$. 

The integral cone complex $Y_\Delta$ can be geometrically realized in $\R^3$ as the boundary of the first octant (by a geometric realization, we mean an embedding into $\R^N$ mapping cells to cells).  The complex $Y_\Delta$ is also geometrically realized as the set of points in the first octant of $\R^3$ whose coordinates satisfy a degenerate triangle inequality, meaning that one coordinate is the sum of the other two. See Figure~\ref{fig:ydelta} for an illustration.

With the definition of an integral cone complex in hand, we have several further definitions to make:
\begin{enumerate}[label=(\alph*)]
\item integral linear maps, integral cone complex maps, and linear regions,
\item the projectivized complex, vertex rays, and induced maps,
\item the radius,
\item subdivisions for generating sets,
\item PL-eigenvectors, 
\item PL-eigenregions,
\item projectively source-sink integral cone complex maps, and
\item extended dynamical maps and sink packages.
\end{enumerate}
After making these definitions, we explain how to compute PL-eigenvectors explicitly.

\bigskip \noindent (a) In the category of integral cone complexes, the natural maps $f : Y \to Y$ are integral cone complex maps, which we now define.  First, suppose $U$ is a subset of some cell $V$ of $Y$; we say that a map $U \to Y$ is integral linear if the image is contained in a cell $W$ and---using the $V$- and $W$-coordinates---the map is given by the restriction of an integral matrix.  

With this definition in hand, a map $f : Y \to Y$ is an integral cone complex map if there is an integral cone complex subdivision $Y_f$ of $Y$ so that the restriction of $f$ to each cell of $Y_f$ is an integral linear map.  If $V_i$ is a cell of $Y$ we denote the cells of such a subdivision of $V_i$ by $V_{i,k}^f$.  We will refer to the cells of any such subdivision as linear pieces for $f$. These pieces are not canonical; the important feature for us is that each $V_{i,k}^f$ is determined by finitely many integer linear inequalities in the $V_i$-coordinates.

If $U$ is any integral linear cone contained in a cell $V$ and $U$ is also $f$-linear, then $U$ can be extended to a subdivision $Y_f$ as above.  Therefore, it makes sense to refer to any such $U$ as a linear piece for $f$.  

We may describe the action of $f$ on an $f$-linear piece $U$ by a 4-tuple $(A,V_i,V_j,U)$, where $V_i$ and $V_j$ are cells of $Y$, the domain $U$ is an integral linear cone contained in $V_i$, the image $f(U)$ is contained in $V_j$, and $A$ is the matrix describing the action of $f$ on the linear piece $U$ in the given coordinates of $V_i$ and $V_j$.

In our algorithms, it will suffice to record only the triple $(A,V_i,V_j)$, omitting the domain.  Indeed, by multiplying the matrices from these triples, we can obtain a matrix for the product of mapping classes without knowing the domain.  In the case where $V_i$ is equal to $V_j$, we may write $(A,V_i)$ instead of $(A,V_i,V_j)$. 

To illustrate, we return to our running example $Y_\Delta$.  Let
\begin{align*}
V_{1,1} &=  \{ (x,y) \in V_1 \mid y \leq x \}, \text{and} \\
V_{1,2} &=  \{ (x,y) \in V_1 \mid x \leq y \}.
\end{align*}
There is an integral cone complex map $f_1$ given by the four 4-tuples
\[
\left( A_{1,1},V_1,V_1,V_{1,1}\right), \left(A_{1,2},V_1,V_2,V_{1,2}\right), \left(A_2,V_2,V_3,V_2\right), \text{ and } \left(A_3,V_3,V_3,V_3\right),
\]
where
\[
A_{1,1} = \left(\begin{smallmatrix*}[r] 1 & -1 \\ 0 & 1 \end{smallmatrix*}\right), \ \ 
A_{1,2} = \left(\begin{smallmatrix*}[r] 1 & 0 \\ -1 & 1 \end{smallmatrix*}\right), \ \ 
A_2 = \left(\begin{smallmatrix*}[r] 1 & 1 \\ 0 & 1 \end{smallmatrix*}\right), \ \ 
A_3 = \left(\begin{smallmatrix*}[r] 1 & 0 \\ 1 & 1 \end{smallmatrix*}\right), \ \ 
\]
There is another cone complex map $r$ given by permuting the $V_1$ cyclically, mapping each to the next by the identity matrix.

\bigskip \noindent (b) If $Y$ is an integral cone complex, there is a projectivized version of $Y$, denoted $\P(Y)$, which is a cell complex in the usual sense.  For instance the projectivized complex for $\MF(S)$ is $\P(\MF(S)) = \PMF(S)$ (Thurston  described the $\Mod(S)$-action on $\PMF(S)$ as opposed to $\MF(S)$).  If $f : Y \to Y$ is an integral cone complex map, then there is an induced cellular map $\P(Y_1) \to \P(Y_1)$, where $Y_1$ is an integral cone complex subdivision of $Y$.  We also define a vertex ray of an integral cone complex $Y$ to be the ray corresponding to a vertex of $\P(Y)$.

For our running example $Y_\Delta$, the cell complex $\P(Y_\Delta)$ is a triangle.  Under the geometric realization given by Figure~\ref{fig:ydelta}, $\P(Y_\Delta)$ is identified with the triangle shown in the picture.  There are three vertex rays in $Y_\Delta$.  The subdivision associated to the map $f_1$ is given by adding a vertex to $\P(Y_\Delta)$ corresponding to the ray spanned by $(1,1)$ in $V_1$; this is the midpoint of the $V_1$-edge of $\P(Y_\Delta)$.  

\bigskip \noindent (c) We now set out to define the radius of an integral cone complex $Y$---which we also refer to as the radius of $\P(Y)$.  First, we define a graph $\G_Y$ whose vertices are the top-dimensional cells of $\P(Y)$ and whose edges connect cells of $\P(Y)$ with nonempty intersection.  For a set of vertices $W$ of $\G_Y$, we define the radius of $Y$ at $W$ to be the supremal distance in $\G_Y$ from a vertex of $\G_Y$ to $W$.  For a vertex $c$ of $\P(Y)$, we define the radius of $Y$ at $c$ to be the radius of $Y$ at $W_c$, the set of top-dimensional cells of $\P(Y)$ containing $c$.

For the running example $Y_\Delta$, the graph $\G_Y$ is a triangle (dual to $\P(Y_\Delta)$), and so the radius with respect to any $W$ is at most 1.  As another example, if $Y$ is any triangulation of $S^2$ given by the equator and any number of longitudes, then the radius of $Y$ with respect to the vertex at the north pole is 1.  As we will see, this example mirrors the situation for $\MF(S)$.  

\bigskip \noindent (d) Suppose that $G$ is a group that acts on the integral cone complex $Y$ by integral cone complex maps, and say that $G$ is generated by $\Gamma = \{h_\ell\}$.  We denote by $Y_\Gamma$ the collection of integral cone complex subdivisions $\{ Y_{h_\ell} \}$ (so there is one subdivision for each generator).  As above, we denote the cells of the $h_\ell$-subdivision of $V_i$ by $V_{i,k}^{h_\ell}$.  

If $\Gamma$ is finite, then there is a single subdivision (namely, the common refinement of the $Y_{h_\ell}$)  with the property that each cell of the subdivision is a linear piece for each $h_\ell$ and $h_\ell^{-1}$.  In practice, it is convenient to use a different subdivision for each generator since this reduces the number of calculations required.  

\bigskip \noindent (e) Let $Y$ be an integral cone complex and let $f : Y \to Y$ be an integral cone complex map.  For a point $v$ in $Y$ and a $\lambda \geq 0$, the point $\lambda v$ is a well-defined point of $Y$.  With this in mind, a PL-eigenvector for $f$ is a point $v \in Y$ so that $f(v) = \lambda v$ for some $\lambda \geq 0$.  We emphasize that if $A$ represents $f$ on a cone $V$ of $Y$, a real eigenvector $v$ for $A$ is not necessarily a PL-eigenvector for $f$.  This is because $v$ might not lie in the cone of $\R^d$ corresponding to $V$.

In our running example, the point $(1,0) \in V_1$ is a PL-eigenvector of the integral cone complex map $f_1$ described above, since it is an eigenvector of the matrix $A_{1,1}$ and since the domain of this linear piece is contained in the codomain.  By the same token, $(0,1) \in V_1$ is not a PL-eigenvector of $f_1$: it is an eigenvector of $A_{1,2}$ but its image is $(0,1) \in V_2$, which is not identified with (any multiple of) $(0,1) \in V_1$.

\bigskip \noindent (f) Next, a PL-eigenregion for the integral cone complex map $f$ is a linear piece $U$ for $f$ that contains a PL-eigenvector $v$.  The linear transformation associated to $f|U$ will recognize $v$ as an eigenvector.  More specifically, if $f|U$ has the triple $(A,V_i,V_j)$ with $i=j$ then $v$ will be a (classical) eigenvector for $A$.  When $i \neq j$, we can choose coordinates on $V_i$ and $V_j$ that agree on the cell $V_{ij} = V_i \cap V_j$.  In these coordinates, we again have that $v$ is an eigenvector for $A$.

As part of our algorithm, we will find a point $w$ with the property that it is contained in the interior of a union of PL-eigenregions for $v$.  In such a case, there may be several different triples associated to the action of $f$ at $w$, and hence several different matrices.  However, all of these matrices will have $v$ as an eigenvector.  

In the running example, the map $f_1$ two PL-eigenregions for the PL-eigenvector $(1,0) \in V_1$, namely, $V_{1,1}$ and $V_3$.  We already discussed the former region. Similarly, $V_3$ is a PL-eigenregion for $f_1$ since $(0,1)$ is an eigenvector for $A_3$ and the third piece of $f_1$ has the same domain and codomain, which contains $(0,1)$.  

\bigskip \noindent (g) Next, an integral cone complex map $f : Y \to Y$ is projectively source-sink if the induced cellular map $\P(Y) \to \P(Y)$ has source-sink dynamics.  This means that the induced map has exactly two fixed points, the source and the sink, and that the $f$-iterates of all points outside of the source converge to the sink.  In particular, the only two PL-eigenvectors for $f$ (up to scale) are representatives of the source and the sink.  

In the running example, if we take $f_2 = r \circ f_1 \circ r^{-1}$, then the integral cone complex map
\[
f_2^{-1} \circ f_1
\]
is projectively source-sink.  The sink and source correspond to the vectors $(\varphi,1) \in V_3$ and $(1,\varphi) \in V_1$, where $\varphi$ is the golden mean.  The corresponding triples are
\[
(D,V_3,V_3) \text{ and } (E,V_1,V_1)
\]
where
\[
D = \left(\begin{smallmatrix*}[r] 2 & 1 \\ 1 & 1 \end{smallmatrix*}\right) \text{ and }
E = \left(\begin{smallmatrix*}[r] 2 & -1 \\ -1 & 1 \end{smallmatrix*}\right).
\]

\bigskip \noindent (h) Let $Y$ be an integral cone complex and $f : Y \to Y$ a projectively source-sink integral cone complex map.  Let $U$ be a PL-eigenregion for the sink of $f$ with triple $(A,V_i,V_j)$.  There exists a maximal subcone $W$ of $V_i \cap V_j$ invariant under $f$, and the map $f|W$ is a dynamical map in the sense that $f$ maps $W$ to itself.  The cone $W$ is nontrivial since it contains the ray corresponding to the sink.  In Section~\ref{sec:alg} we give an algorithmic procedure to find $W$ and to compute the linear map associated to $f|W$.  We also show that, in any $\R^d$-coordinates on $V_i$, the cone $W$ is the intersection of $V_i$ with a subspace of $\R^d$.  Hence it makes sense to project $V_i$ onto $W$.

We prove in Lemma~\ref{lem:specrad} that (under a certain condition about the location of the sink) the largest real eigenvalue of $f|W$ is the eigenvalue for the sink, and any corresponding positive eigenvector is the sink.  If we precompose $f|W$ with a projection map $V_i \to W$ (really a projection of the corresponding subspaces of $\R^d$), we obtain a linear map $V_i \to W$, which we call an extended dynamical map for the sink; say that $D$ is a matrix for this map.  The nonzero real eigenvalues of $D$ are the same as that of $f|W$.  As such, the pair $(V_i,D)$ contains the data of the sink; we refer to any such pair as a sink package.

In the running example, we have $i=j$  (this is the generic scenario); specifically $W = V_i = V_j = V_3$.  As such, the sink package for the map $f_2^{-1} \circ f_1$ is $(V_3,D)$, where $D$ is the matrix given above.

\p{The space of measured foliations...}  Turning back now to the mapping class group, we denote by $\MF(S)$ the space of measured foliations on $S$, again up to Whitehead moves and isotopy.  See the book by Fathi--Laudenbach--Po\'enaru \cite{FLP} for background on $\MF(S)$.

Let $S=S_{g,p}$ and let $\xi(S)=3g-3+p$.  Thurston proved that $\MF(S)$ is homeomorphic to $\R^{2\xi(S)}$.  He further proved that $\MF(S)$ has the structure of an integral cone complex \cite{Th}.   Thurston actually described the projectivized complex for $\PMF(S)$, which can be described in terms of train track coordinates as follows.

A train track in $S$ is a certain type of embedded finite graph in $S$ with a well-defined tangent line at each point; see the book of Penner--Harer \cite{PennerHarer92} for details.  Thurston's cells in the cell structure for $\PMF(S)$---equivalently, the projectivized integral cone complex structure for $\MF(S)$---correspond naturally to train tracks in $S$.  
Conversely, every train track gives a cone in $\MF(S)$ by considering the measured foliations that are carried by $\tau$.  Given a train track $\tau$, the associated cone will be denoted $V(\tau)$.  For any given $S$, we may choose once and for all a cell decomposition $Y$ of $\MF(S)$ with cells $V(\tau_1),\dots,V(\tau_N)$. 

There are infinitely many choices for the set of $V(\tau_i)$, that is, infinitely many choices of structures as an integral cone complex where the cells correspond to train tracks.  But they all give the same PL structure.  While there are integral cone complex structures on $\MF(S)$ that do not arise from train tracks, we will only make use of the ones that do; in what follows we refer to these as \emph{train track cell structures}.

As an example, let $S$ be $S_{0,4}$, or equivalently the thrice-punctured plane.  The train tracks $\tau_1$, $\tau_2$, and $\tau_3$ shown in Figure~\ref{fig:coords} give a cell structure on $\MF(S_{0,4})$.  The weights $x_i$ and $y_i$ on $\tau_i$ given an identification of each cell $V(\tau_i)$ with the first quadrant in $\R^2$.  The resulting integral cone complex structure on $\MF(S_{0,4})$ is isomorphic to our running example $Y_\Delta$.  The geometric realization given in Figure~\ref{fig:ydelta} has a geometric description, namely, it is given by the geometric intersection numbers of a weighted train track with three rays to infinity that emanate from the three marked points.  

\begin{figure}[hbt]
\labellist
\small\hair 2pt
\pinlabel {$\tau_1$} [ ] at 200 460
\pinlabel {$x_1$} [ ] at 185 525
\pinlabel {$y_1$} [ ] at 340 525
\pinlabel {$\tau_2$} [ ] at 40 180
\pinlabel {$x_2$} [ ] at 140 145
\pinlabel {$y_2$} [ ] at 195 240
\pinlabel {$\tau_3$} [ ] at 500 200
\pinlabel {$x_3$} [ ] at 355 225
\pinlabel {$y_3$} [ ] at 400 145
\pinlabel {$a$} [ ] at -10 460
\pinlabel {$b$} [ ] at 205 60
\endlabellist  
\centering
 \includegraphics[width=.5\textwidth]{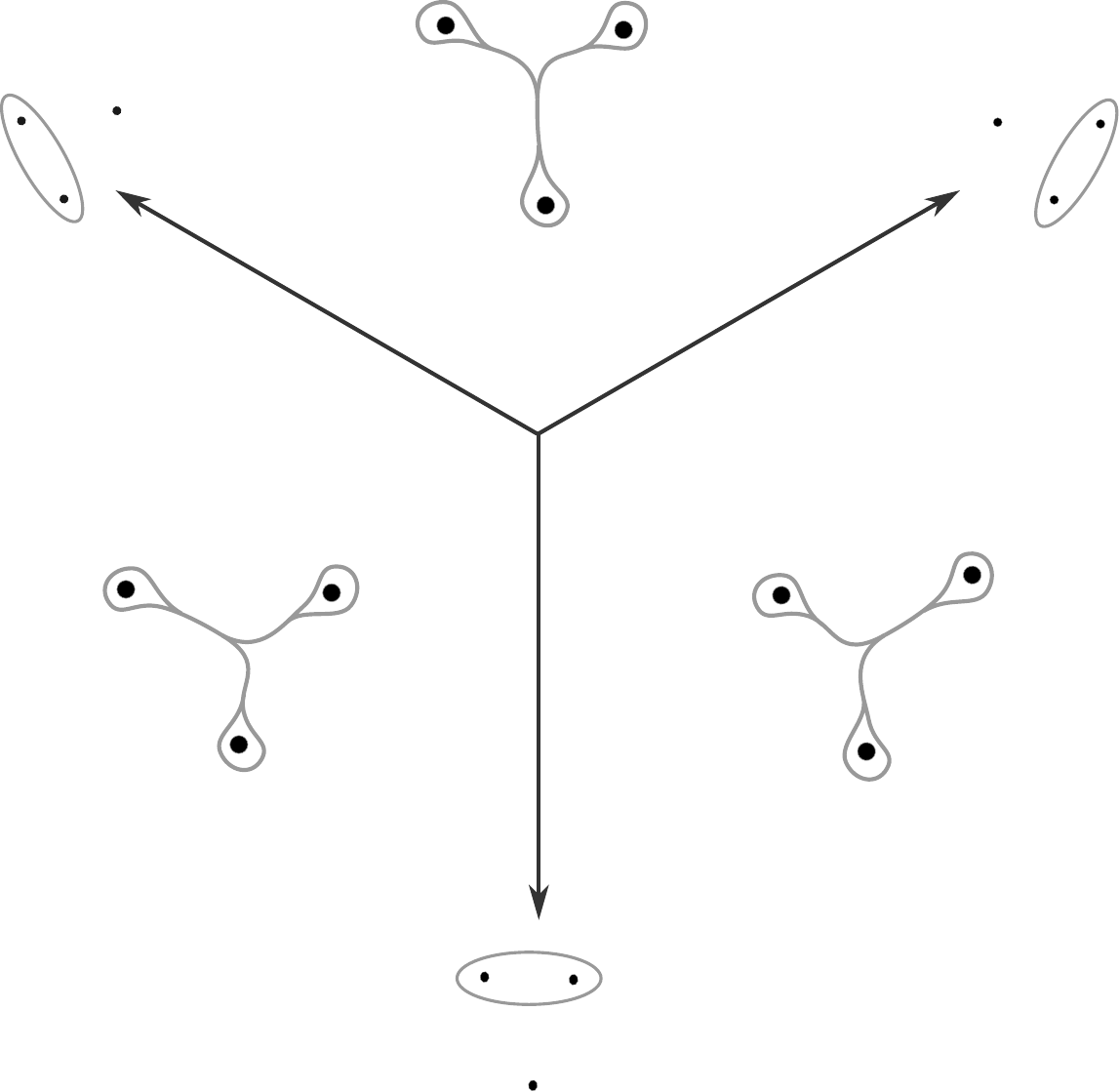}
\caption{Train tracks giving a cell structure on $\MF(S_{0,4})$}\label{fig:coords}
\end{figure}

There is a convenient class of train track cell structures on $\MF(S)$ that applies to an arbitrary $S$, namely, the ones arising from Dehn--Thurston train tracks.  These are defined as follows.  First, we choose a pants decomposition of $S$; we will refer to the curves of the pants decomposition as base curves.  From there, we decompose $S$ into a collection of annuli and pairs of pants.  The core curves of the annuli are the base curves and the pairs of pants are the closures of the complementary regions.  Each pair of pants has between one and three boundary components, with the other holes accounted for by punctures.  As in the book by Penner--Harer \cite[{\S}2.6]{PennerHarer92}, there are four model (sub)-train tracks on each annulus and at most four on each pair of pants.  By gluing together all possible models, we obtain the collection of Dehn--Thurston train tracks.  We prove in Lemma~\ref{lem:rad} that if $Y$ is a Dehn--Thurston cell structure on $\MF(S)$ and $c$ is one of the base curves, then the radius of $Y$ at $c$ is~1.  

\p{...and the mapping class group action} There is a natural action of $\Mod(S)$ on $\MF(S)$; again see \cite{FLP}. As explained by Thurston, the action of $\Mod(S)$ on $\MF(S)$ is by integral cone complex maps (in his setup, piecewise integral projective maps).

Given an integral cone complex structure $Y$ for $\MF(S)$ and a finite generating set $\Gamma$ for $\Mod(S)$, there is (as above) an associated collection of subdivisions $Y_\Gamma$ (as in our general discussion, there are canonical choices for these subdivisions, but in practice we compute subdivisions which may be finer than those). Moreover, each subdivision is a train track cell structure.  If $V(\tau_i)$ is a top-dimensional cell of $Y$ and $h_\ell$ is an element of $\Gamma$, we denote by $V(\tau_{i,1}^{h_\ell}), \dots, V(\tau_{i,N}^{h_\ell})$ the cells of the corresponding subdivision of $V(\tau_i)$.  Each $\tau_{i,j}^{h_\ell}$ is obtained from the corresponding $\tau_i$ by a sequence of splittings; again see \cite{PennerHarer92} for details.

An additional property of the $\Mod(S)$-action is that each pseudo-Anosov element acts on $\MF(S)$ with projective source-sink dynamics.  In particular, such an element has exactly two PL-eigenvectors.  The eigenvalues are $\lambda$ and $1/\lambda$, for the source and sink, respectively.  We will refer to a PL-eigenregion for either of these PL-eigenvectors as a Nielsen--Thurston eigenregion.

From a Nielsen--Thurston eigenregion, we obtain an extended dynamical map, and a corresponding sink package $(V(\tau_i),D)$, or simply $(\tau_i,D)$.  Any such sink package $(\tau_i,D)$ will be called a Nielsen--Thurston package.  This is the desired output of our algorithm.  

In our running example, the left-handed half-twist $H_a$ about the curve $a$ in Figure~\ref{fig:coords} (obtained by a clockwise $\pi$-rotation of the two leftmost marked points) is given by the map $f_1$ described above (using the above identification of $\MF(S_{0,4})$ with $Y_\Delta$), and similarly the half-twist $H_b$ is given by $f_2$.  The mapping class $H_b^{-1} H_a$ is pseudo-Anosov with stretch factor $\varphi^2$.  By the above calculation, the sink package for $H_b^{-1} H_a$ is $(D,V(\tau_3))$.

\p{Statement of the main technical theorem} Let $S=S_{g,p}$.  For the statement of Theorem~\ref{thm:main}, we must define a constant $Q=Q(S)$.  We build up the definition of $Q$ by defining a sequence of preliminary constants.  We do this in order to give a sense of how $Q$ arises, and to make feasible to improve our $Q$ by by improving the intermediate constants (and hence improve the implementation).  In the end, we give a definition of $Q$ using the intermediary constants, and also a formula given directly in terms of $g$ and $p$; the impatient reader is encouraged to skip to the latter definition of $Q$.

We begin with a basic constant related to the topology of $S$: 
\[
K(S) = 4\xi(S) = 4(3g-3+p).
\]
The number $K=K(S)$ is the maximum number of separatrices for a measured foliation on $S$; we refer to $K$ as the separatrix complexity of $S$.  In other words, this is the maximum possible sum of the degrees of the singularities of a foliation on $S$.

Next we have the constants
\[
D(S) = 1\quad \text{and} \quad E(S) = 14.
\]
The number $D=D(S)$ is the radius (as defined above) of any Dehn--Thurston integral cone complex structure on $\MF(S)$, specifically the radius about one of the curves of the underlying pants decomposition; see Lemma~\ref{lem:rad}.  The number $E(S)$ is an upper bound on the diameter in $\C(S)$ of the set of vertex cycles for a train track in $S$.  Aougab \cite[Section 5]{Aougab} proved that for all but finitely many $S$ there is an upper bound of 3 for all train tracks in $S$.   As observed by Tang--Webb \cite[Section 2.3]{TangWebb}, it follows from the work of Bowditch \cite{Bowditch} that there is an upper bound of 14 for all train tracks in all surfaces.  Masur--Minsky \cite{MasurMinsky99} had previously conjectured that the diameter is bounded above by 3 for all train tracks in all surfaces, but this is still an open problem.  

From the above constants, we now define several constants that appear in the statements of the main propositions in the paper or in the proof of Theorem~\ref{thm:main}:
Next, we have 
\begin{align*}
\Qcurve(S) &= 2K^2 \\
\Qtrack(S) &= E \cdot \Qcurve \\
\Qforce(S) &= \Qtrack + (2K+6) \cdot 2K\\
\Qfit(S) &= \Qforce(S) + (E+3) \cdot \Qcurve\\
\Qdt(S) &= (D+1) \Qtrack
\end{align*}
The first three of these constants appear in the statements of Propositions~\ref{prop:boundedslope}, \ref{prop:forcing}, and~\ref{prop:squeeze}, which we call the bounded slope lemma, the forcing lemma, and the fitting lemma.  The number $2K+6$ appears in the statement of Lemma~\ref{lem:covering_rectangles}.

Finally, we define 
\[
Q(S) = \Qforce(S) + \Qfit(S) + \Qdt(S) + 1. 
\]
This gives
\begin{align*}
Q(S_{g,p}) &= 2,464\, \xi^2 + 96\,\xi + 1 \\
&= 2,464(3g-3+p)^2 + 96(3g-3+p) + 1
\end{align*}
and in particular 
\[
Q(S_2) = 22,465 \quad \text{and} \quad Q(S_3) = 89,281.
\]
The coefficient 2,464 arises as $32(5E+7)$.  If one proves the Masur--Minsky conjecture that $E(S)$ can be replaced with 3, then we obtain
\[
Q(S_{g,p}) = 704\, \xi^2 + 96\,\xi + 1, \quad  Q(S_2) = 6,625, \quad \text{and} \quad Q(S_3) = 25,921.
\]
We may now state our main technical result in terms of Dehn--Thurston cell structures, Nielsen--Thurston eigenregions, and the number $Q=Q(S)$ just defined.  

\begin{theorem}
\label{thm:main}
Fix a surface $S=S_{g,p}$, a generating set $\Gamma$ for $\Mod(S)$, a Dehn--Thurston cell structure $Y$ of $\MF(S)$, and a base curve $c \in \MF(S)$ for the Dehn--Thurston coordinates.  Let $Q=Q(S)$.  Let $f \in \Mod(S)$ be pseudo-Anosov with unstable foliation $\F$ and stretch factor $\lambda$.  Then
\begin{enumerate}
\item $f^Q(c)$ lies in the interior of a union of Nielsen--Thurston eigenregions for $\F$, and
\item if $(V(\tau_i),D)$ is a Nielsen--Thurston package for $f$, then $\lambda$ is the largest real eigenvalue of $D$ and any corresponding positive eigenvector is a positive multiple of $\F$.  
\end{enumerate}
\end{theorem}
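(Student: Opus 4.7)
The plan is to decompose the $Q$ iterations of $f$ applied to the base curve $c$ into three successive stages mirroring the summands in $Q = \Qdt + \Qforce + \Qfit + 1$, with each stage controlled by one of the three main propositions of the paper; conclusion (2) will then follow immediately from the spectral radius lemma (Lemma~\ref{lem:specrad}).

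Stage~1 (from Dehn--Thurston to a controlled train track): Since $c$ is a base curve of the Dehn--Thurston structure $Y$, Lemma~\ref{lem:rad} gives that the radius of $Y$ at $c$ is $D=1$, so every top-dimensional cell we might need lies at dual-graph distance at most $1$ from a cell through $c$. I would apply $f^{\Qdt}$ in blocks of $\Qtrack$ iterations, each block invoking the bounded slope lemma (Proposition~\ref{prop:boundedslope}) to transport the iterate in a controlled way; the prefactor $(D+1)$ in $\Qdt = (D+1)\Qtrack$ absorbs the initial hop from $c$ into the interior of a top-dimensional cell. The output is an iterate $f^{\Qdt}(c)$ whose position relative to the unstable foliation $\F$ is controlled well enough to feed into the next stage.

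Stage~2 (forcing) and Stage~3 (fitting): From $f^{\Qdt}(c)$, I would apply $f^{\Qforce}$ and then $f^{\Qfit}$. The forcing lemma (Proposition~\ref{prop:forcing}) ensures that $f^{\Qdt + \Qforce}(c)$ is carried by a train track that also carries $\F$; its constant $\Qforce = \Qtrack + (2K+6)\cdot 2K$ reflects a combination of the track-level bounded slope bound with the rectangle cover of Lemma~\ref{lem:covering_rectangles}, whose constant $2K+6$ appears directly. The fitting lemma (Proposition~\ref{prop:squeeze}) then upgrades ``carried by a train track carrying $\F$'' to ``contained in a union of PL-eigenregions for $\F$''. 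The extra $+1$ guarantees landing in the \emph{interior} of this union rather than on its boundary, by running one more application of $f$ across a codimension-one face. For conclusion (2), any Nielsen--Thurston package $(V(\tau_i),D)$ is by construction a matrix for an extended dynamical map at the sink of $f$, which is exactly the unstable foliation $\F$, and Lemma~\ref{lem:specrad} then identifies $\lambda$ as the largest real eigenvalue of $D$ with $\F$ as the corresponding positive eigenvector (unique up to scale).

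The hard part will be choreographing the three propositions so that the hypotheses needed for each are guaranteed by the conclusions of the previous one: the iterates must traverse the regimes controlled by curve slopes, then by train track carriage, then by PL-eigenregions, without losing the quantitative control already gained. The interlocking arithmetic of $\Qcurve, \Qtrack, \Qforce, \Qfit, \Qdt$ is designed exactly to make this chain close, and the radius-$1$ property of Dehn--Thurston structures is what permits starting from the specified base curve $c$ rather than from a generic interior point of $\MF(S)$.
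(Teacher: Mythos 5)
Your proposal misreads the architecture of the argument: the paper never ``stages'' the $Q$ iterations. The entire mechanism by which iteration helps is the identity $\H_f(f^k(c)) = \H_f(c)+k$, so one applies $f^Q$ once and obtains a single horizontality estimate $\H_f(f^Q(c)) \geq \H_f(\tau_i) + \Qforce + \Qfit + 1$ for the coordinate train tracks $\tau_i$ involved; the summand $\Qdt$ is not an iteration budget but a static bound on $\H_f(\tau_i) - \H_f(c)$, obtained from the radius-$1$ property (Lemma~\ref{lem:rad}) together with the bound $\diam \H_f(\tau) \leq \Qtrack$ for each track (Lemma~\ref{lemma:trackslope1}). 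Your Stage~1 (``blocks of $\Qtrack$ iterations\dots invoking the bounded slope lemma to transport the iterate'') does not correspond to any step that could actually be carried out, and your claim that the $+1$ is ``one more application of $f$ across a codimension-one face'' is wrong: the $+1$ is slack that, via continuity of horizontality, yields an open neighborhood of $f^Q(c)$ on which the forcing lemma still applies.

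The more serious gap is in the second half. The fitting lemma does not ``upgrade carried-by-a-track-carrying-$\F$ to contained-in-a-union-of-PL-eigenregions''; it only produces an $f$-invariant train track $\nu$ carried by $\tau_i$ and $\tau_j$ with controlled horizontality. To get from $\nu$ to an eigenregion one must (i) apply the forcing lemma a \emph{second} time---this is why $\Qforce$ must survive to the end, giving $\H_f(f^Q(c)) > \H_f(\nu)+\Qforce$---to show that every curve $d$ near $f^Q(c)$ is fully carried by a complete diagonal extension $\wt\nu$ of $\nu$; (ii) prove $f(\wt\nu) \carried \tau_j$ via Lemma~\ref{lem:still_carry}, which is what makes $V(\wt\nu)$ an $f$-linear piece and hence a Nielsen--Thurston eigenregion; and (iii) argue that the linear piece $C$ containing $f^Q(c)$ overlaps $V(\wt\nu)$ in an open set, so both lie in a common linear piece which is itself an eigenregion. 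None of this appears in your outline. Finally, for statement (2) you correctly invoke Lemma~\ref{lem:specrad}, but you must also verify its hypothesis that the sink does not lie on a rational ray; the paper does this by observing that the subdivision vertices are rational, hence correspond to multicurves, while the unstable foliation of a pseudo-Anosov is not a multicurve.
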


We may think of the first statement of Theorem~\ref{thm:main} as saying that the Nielsen--Thurston eigenregion is large in some sense (Proposition~\ref{prop:squeeze} even more directly has this interpretation).  For a general PL-action of a group on a manifold, we would expect the eigenregion for a sink to become arbitrarily small as the word length of $f$ becomes increasingly large.  As such, Theorem~\ref{thm:main} is not a general fact about PL-actions; our arguments rely heavily on technology specific to the setting of the mapping class group.

The point of the first statement of Theorem~\ref{thm:main} is that $f^Q(c)$ finds a Nielsen--Thurston eigenregion for $f$ (or perhaps several).  From the action of $f$ on any such eigenregion we may compute a Nielsen--Thurston package $(\tau,D)$.  Then the second statement of the theorem allows us to extract the stretch factor and unstable measured foliation for $f$.  

We emphasize that the cell structure on $\MF(S)$ is fixed once and for all.  If the cell structure---and hence the notion of linearity---were allowed to depend on $f$, then the corresponding statement would hold for $Q=1$, because there exists an invariant train track for $f$ that carries the curve $c$.  We also emphasize that our exponent $Q$ does not depend on the generating set for $\Mod(S)$.  We will return to this point in Section~\ref{sec:alg}.

\subsection{The algorithm}\label{sec:alg} We now explain in detail the algorithm that comes from Theorem~\ref{thm:main}.  As above, the algorithm runs in quadratic time with respect to the length of the spelling of $f$ as a word in any fixed generating set for $\Mod(S)$, and it runs in polynomial time with respect to $\xi(S)$. And the output is a train track and a matrix.  We state this result as Corollary~\ref{cor:main} below.

There are three sub-routines in the main algorithm: the linear piece algorithm, the basic computation algorithm, and the sink package algorithm.  We explain these in turn.  The first of these is used in the second; the second and third are both utilized in the main algorithm.

\p{Ledgers and the linear piece algorithm} Let $M$ be a space that admits an integral cone complex structure $Y$, and let $G$ be a finitely generated group acting on $M$ by integral cone complex maps.  Suppose we have chosen
\begin{enumerate}
\item a generating set $\Gamma = \{h_1,\dots,h_m\}$ for $G$ and
\item a cell structure for $Y$ on $M$ with cells $\{V_1,\dots,V_N\}$.
\newcounter{ledger}
\setcounter{ledger}{\value{enumi}}
\end{enumerate}
A linear piece algorithm for the pair $(Y,\Gamma)$ is an algorithm that takes as input an $h_\ell \in \Gamma$ and a cell $V_i$ of $Y$, and produces as output two pieces of data:
\begin{enumerate}
\setcounter{enumi}{\value{ledger}}
\item a subdivision $\{V_{i,k}^{h_\ell}\}$ of $V_i$ corresponding to $h_\ell$ and
\item the integer matrices $\A = \{A_{i,k}^{h_\ell}\}$ so each $h_\ell | V_{i,k}^{h_\ell}$  is described by $A_{i,k}^{h_\ell}$.
\end{enumerate}
Here the $V_{i,k}^{h_\ell}$ are described in terms of linear inequalities in the $V_i$-coordinates.

In some applications it may be expedient to write down once and for all the set of subdivisions $Y_\Gamma = \{Y_{h_\ell}\}$ for all cells $V_i$ and all generators $h_\ell \in \Gamma$, as well as the set of all matrices $\A = \{A_{i,k}^{h_\ell}\}$. We would refer to any such quadruple $(\Gamma,Y,Y_\Gamma,\A)$ as a \emph{ledger} for the action of $G$ on $M$.  As above, once $\Gamma$ and $Y$ are chosen, the $Y_\Gamma$ and the $\A$ are derived from those choices. In Section~\ref{section:example} we demonstrate our algorithm through a particular example, and in that case we start by giving the entire ledger.

\p{Linear piece algorithms for mapping class groups} For mapping class groups, there are many examples of linear piece algorithms (and ledgers) already in the literature.  In his thesis \cite{Penner}, Penner gave formulas---which give a constant-time linear piece algorithm---in terms of Dehn--Thurston train track coordinates (explicit matrices are not given, but they can be derived from the computations there).  Penner's formulas contain minor mistakes, corrected in his subsequent paper \cite{Penner06}. There is still one other mistake, pointed out by Yandi Wu: in the second elementary move (Theorem~A.2), the $\lambda$'s and $\kappa$'s should be switched \cite{yandi}.  For punctured surfaces, a constant-time linear piece algorithm is given by Hamidi-Tehrani--Chen \cite{htc}.

Again, for the case of the $\Mod(S)$-action on $\MF(S)$, the cells of $Y_\Gamma$ correspond to train tracks.  Further, we can choose the ledger so that each subdivision $V_{i,k}^{h_\ell}$ is equal to some $V(\tau_{i,k}^{h_\ell})$, where $\tau_{i,k}^{h_\ell}$ is obtained from train track $\tau_i$ by a sequence of splittings; see the book by Penner--Harer for the definition of a splitting \cite{PennerHarer92}.

In the case of $\Mod(S)$, the number of cells $V_i$ is exponential in $\xi(S)$. Therefore, in order to obtain an algorithm that runs in polynomial time with respect to $\xi(S)$, we cannot compute the entire ledger. Fortunately, this is not necessary. Penner's ledger (or, formulas) gives us a linear piece algorithm that runs in constant time with the Lickorish generators for $\Mod(S)$. This is because (a) each generator only interacts with a bounded number of coordinates, (b) there is a finite set of closed formulas describing the action of a generator on the corresponding coordinates, and (c) the corresponding linear inequalities have bounded bit size.

\p{Basic computational algorithm} Say that $G$ is a group with finite generating set $\Gamma=\{h_i\}$ and that $G$ acts on a space $M$ that admits the structure of an $m$-dimensional integral cone complex $Y$. Suppose we are given some word
\[
w = f_n \cdots f_1
\]
in the generating set $\Gamma$, and let $c \in M$ be a point, written as an integer point in some cell $V_i$ of $Y$.  Say that $w$ represents $f \in G$.  We would like to compute:
\begin{itemize}
\item $f(c) \in M$ and
\item a triple $(A,V_i,V_j)$ describing $f$ in a partial neighborhood of $c$.
\end{itemize}
By the latter, we mean (as above), a triple $(A,V_i,V_j)$ where $V_i$ and $V_j$ are cells of $Y$ and $A$ is a matrix describing $f$ on a linear piece $U \subseteq V_i$ containing $c$ (the matrix $A$ uses the $V_i$ and $V_j$ coordinates).  We now give the algorithm for performing these computations.  

\medskip

\noindent \emph{Setup.} A space $M$ and group $G$ as above, with a choice of linear piece algorithm for the choices of $Y$ and $\Gamma$.

\medskip

\noindent \emph{Input.} A word $w = f_n \cdots f_1$ in the generating set $\Gamma$ (so each $f_\ell$ is some $h_{i_\ell}^{\pm 1}$)

\medskip

\noindent \emph{Initialization.} Let $c_0=c$, with $c_0 \in V_{i_0}$, let $A_0 = I_m$ (the identity matrix), and let $\ell=1$.

\medskip

\noindent \emph{For $\ell = 1,\dots,n$}:

\begin{enumerate}
\item Apply the linear piece algorithm to $V_{i_{\ell-1}}$ and $f_\ell$, and determine a top-dimensional subdivision cell $V_{i_{\ell-1},k}^{f_\ell}$ that contains $c_{\ell-1}$ and on which $f_\ell$ is linear, has codomain in some $V_{i_\ell}$, and is represented by a matrix $A_{i,k}^{f_\ell}$.
\item Compute $c_{\ell} = A_{i,k}^{f_\ell} c_{\ell-1}$ and $A_{\ell} =  A_{i,k}^{f_\ell} A_{\ell-1}$.
\end{enumerate}

\medskip

\noindent \emph{Output.} $f(c) = c_n$ and $(A_n,V_{i_0},V_{i_n})$ 

\medskip

We emphasize that there can be different outputs for the triple $(A_n,V_{i_0},V_{i_n})$, since at each stage there may be more than one linear piece of $f_\ell$ containing $c$.  We also point out that step (1) is where the generating set for $G$ is most relevant to the complexity of the algorithm; if we choose a generating set that is not well-suited to the cell structure on $Y$, then the linear piece algorithm can be more complicated.

\p{The sink package algorithm} We now give an algorithm to find the sink package for a projectively source-sink integral cone complex map $f : Y \to Y$.  We suppose that we have in hand a triple $(A,V_i,V_j)$ for the action of $f$ on a PL-eigenregion for the sink $v \in Y$.  Both $V_i$ and $V_j$ are cones in $\R^d$, and we may assume without loss of generality that whatever faces are shared in $Y$ are also shared in $\R^d$.  In this case we say that the coordinates on $V_i$ and $V_j$ are compatible.  

\medskip

\noindent \emph{Input.} A triple $(A,V_i,V_j)$ for a PL-eigenregion for the sink of a projectively source-sink integral cone complex map.  We assume that the matrix $A$ is written in terms of compatible $\R^d$-coordinates on $V_i$ and $V_j$.  

\medskip

\noindent \emph{Initialization.} Let $W_0$ be the subspace of $\R^d$ spanned by $V_i \cap V_j$ (in the compatible coordinates).

\medskip

\begin{enumerate}
\item \emph{For $k = 1,\dots, m$}: \newline
\hspace*{1em} let $W_k = A(W_{k-1}) \cap W_{k-1}$
\item Choose a $d \times d$ matrix $B$ that projects $\R^d$ to $W_m$; set $D=AB$.
\end{enumerate}

\medskip

\noindent \emph{Output.}  $(V_i,D)$

\medskip

In general, projection onto an integral linear subspace gives a matrix with rational entries.  So, generically, we expect the output matrix $D$ to have rational entries.  

We will of course apply the sink package algorithm in the case of the mapping class group, in which case $V_i$ can be replaced by the train track defining the cell of $\MF(S)$ corresponding to $V_i$.  Again, we prove in Lemma~\ref{lem:specrad} that the largest real eigenvalue of $D$ is the eigenvalue for the sink, and any corresponding positive eigenvector is the sink.  

\p{Property Q} We will state the main algorithm for any group action on an integral cone complex that satisfies the same property as the one guaranteed for the mapping class group by the first statement of Theorem~\ref{thm:main}.  Specifically, we say that the action of a finitely generated group $G$ on an integral cone complex $Y$ satisfies property Q if there is a natural number $Q=Q(Y)$ so that, for any $f \in G$ acting with projectively source-sink dynamics and any $c \in Y$ lying on a vertex ray, the point $f^Q(c)$ lies in the interior of a union of PL-eigenregions for $f$ containing the sink. 

\p{The main algorithm} Say that $G$ is a group with finite generating set $\Gamma=\{h_i\}$ and that $G$ acts on a space $M$ that admits the structure of an $m$-dimensional integral cone complex $Y$. Fix a constant time linear piece algorithm for the choices of Y and $\Gamma$, and fix some nonzero $c \in M$ lying on a vertex ray of $Y$.  Suppose that the $G$-action on $M$ satisfies property Q for some specific $Q=Q(Y)$.  

\medskip

\noindent \emph{Input.} A word $w = f_n \cdots f_1$ in the generating set $\Gamma$ that represents a projectively source-sink element $f$ of $G$.  

\medskip

\noindent \emph{Step 1.} Applying the basic computation algorithm to $w$, compute $f^Q(c)$.

\medskip

\noindent \emph{Step 2.} Applying the basic computation algorithm to $w$, compute a triple $(A,\tau_i,\tau_j)$ associated to an $f$-linear piece $U$ of $Y$ containing $f^Q(c)$.

\medskip

\noindent \emph{Step 3.} Compute a sink package $(\tau_i,D)$ for $f$ using $V(\tau_i)$-coordinates.  

\medskip

\noindent \emph{Output.} $(\tau_i,D)$

\medskip

We are finally ready to state our corollary of Theorem~\ref{thm:main}, which gives the algorithm from the Main Theorem. 

\begin{corollary}
\label{cor:main}
Fix a surface $S$ and let $\Gamma$ be any finite generating set for $\Mod(S)$.  The main algorithm computes a Nielsen--Thurston package for any pseudo-Anosov $f \in \Mod(S)$, given as a word $w$ in $\Gamma$.  Assuming that there is a constant-time linear piece algorithm for the choice of $\Gamma$ and the choice of integral cone complex structure on $\MF(S)$, the running time of the main algorithm is quadratic in the length of $w$ and polynomial in $\xi(S)$.
\end{corollary}

Corollary~\ref{cor:main} follows immediately from Theorem~\ref{thm:main}, the standard running times for matrix multiplication and the Zassenhaus algorithm, and the existence of a constant time linear piece algorithm for the mapping class group. (The Zassenhaus algorithm here boils down to row reduction of matrices of size $O\bigl(\xi\bigr)$ and bit size $O\bigl(n\bigr)$, and this row reduction problem has the complexity stated in the introducition; see \cite[Lecture X, Theorem 5]{FPAA00}).

\p{The Nielsen--Thurston problem} The more general problem that our work fits into is the Nielsen--Thurston problem.  This is the algorithmic problem of determining, from a product of generators of $\Mod(S)$, the Nielsen--Thurston type of the resulting $f \in \Mod(S)$ and the associated data: the period if $f$ is periodic, the canonical reduction system if $f$ is reducible, and the stretch factor and foliations when $f$ is pseudo-Anosov.  

Bell--Webb gave a polynomial-time algorithm for the reducibility problem, that is, the problem of determining whether an infinite order mapping class (given as a word in the generators) is reducible \cite{BellWebb}.  In this case, their algorithm gives the canonical reduction system.  Otherwise, their algorithm computes the asymptotic translation length in the curve graph.  (It is known classically that if $f$ is periodic, then this---plus the period---can be determined in quadratic time.   Indeed, this follows from the upper bound on the period (depending only on $S$) and the quadratic-time solution to the word problem for $\Mod(S)$.)  Thus, combining our work with theirs, we obtain a polynomial-time algorithm to solve the Nielsen--Thurston problem.  

Baroni \cite{Baroni} recently gave an alternate version of the Bell--Webb algorithm.  Instead of just being polynomial in the word length, his algorithm is polynomial in the word length and the complexity of the surface.  On the other hand, Baroni's algorithm does not explicitly determine any reducing curves.  

As pointed out to us by Agol, there is an approach to the Nielsen--Thurston problem using the classical theory of 3-manifolds.  Given a mapping class, the algorithm proceeds by producing a triangulation of the corresponding mapping torus, and then using the Jaco--Tollefson algorithm to find the JSJ-decomposition \cite[Section 8]{JT}.  (The key point is that a pseudo-Anosov mapping class has a trivial JSJ-decomposition.)


\subsection{New tools and an outline of the proof}

We now give an outline of the paper and the proofs of the main results, as well as descriptions of the new tools introduced in the paper.  We begin in Section~\ref{section:example} by illustrating our algorithm through a concrete example.  

The rest of the paper is devoted to the proof of Theorem~\ref{thm:main}.  The keys to our proof are Propositions~\ref{prop:forcing} and~\ref{prop:squeeze}, called the forcing lemma and the fitting lemma.  Both propositions are stated in terms of the notion of slope for a curve in a surface, which is also new.  We now turn to developing this idea, beginning with the required background.  

\p{Horizontal and vertical foliations} Let $f \in \Mod(S)$ be a pseudo-Anosov mapping class.  In order to orient ourselves (and the reader), we will refer to the stable and unstable measured foliations, $(\F^s,\mu^s)$ and $(\F^u,\mu^u)$, as $(\F_v,\mu_v)$ and $(\F_h,\mu_h)$.  The `\emph{v}' and `\emph{h}' stand for ``vertical'' and ``horizontal'' and in our pictures we will (away from singularities) draw the foliations so that $\F_v$ is vertical, $\F_h$ is horizontal, and the transverse measures are $|dx|$ and $|dy|$.  In such a picture, the pseudo-Anosov representative of $f$ stretches horizontally and compresses vertically.

\p{Singular Euclidean structures} The two measured foliations $\F_v$ and $\F_h$ induce a singular Euclidean structure $X$ on $S$, the singular points being the singularities of the foliations. 
Let $\bar X$ be the closed singular Euclidean surface defined by taking the completion of $X$ with the induced metric. There is a set of marked points $\Sigma \subseteq \bar X$ so that $\bar X \setminus \Sigma$ is equal to the surface obtained from $X$ by deleting the singularities of $\F_v$ and $\F_h$.  In other words, $\Sigma$ is the set of points that either correspond to punctures of $X$ or singularities of the foliations (or both). 

We remark that $\F_h$ and $\F_v$ are also the horizontal and vertical measured foliations associated to an integrable meromorphic quadratic differential $q$ on $\bar X$, regarded as a closed Riemann surface.  

\p{Section~\ref{section:slope}: Saddle connections and slope} Let $X$ be a singular Euclidean surface.  A saddle connection in the surface $\bar X$ is a closed geodesic arc that meets $\Sigma$ in exactly two points, its endpoints.  The corresponding arc in $X$ is also called a saddle connection.  

Let $\sigma$ be such a saddle connection in $X$.  We define the \emph{slope} of $\sigma$ as its height over width:
\[
\slope_X(\sigma) = \frac{\mu_h(\sigma)}{\mu_v(\sigma)}.
\]
For any collection of saddle connections $Z$, we define $\slope_X(Z)$ to be the set of slopes of elements of $Z$.  

Let $c$ be a simple closed curve in $X$.  By pulling $c$ tight in $X$, the curve $c$ degenerates to an immersed path in $\bar X$ contained in a set of pairwise non-crossing saddle connections.  Here and throughout when we say that saddle connections are non-crossing, we mean that their interiors are disjoint.  Similarly, if $c$ is a measured foliation on $S$, we can pull tight each nonsingular leaf.  Each leaf gives an immersed path contained in a set of pairwise non-crossing saddle connections.  Further, since the leaves are pairwise disjoint, there is a single collection of pairwise non-crossing saddle connections so that each leaf gives an immersed path contained in the union.  For $c$ either a simple closed curve or a foliation, let $Z=\{\sigma_1, \ldots, \sigma_k\}$ be the resulting collection of pairwise non-crossing saddle connections.  We define the slope of $c$ as the finite set
\[
  \slope_X(c) = \slope_X(Z) = \{\slope_X(\sigma_1), \ldots, \slope_X(\sigma_k)\}.
\]
Finally, for a train track $\tau$, we define $\slope_X(\tau)$ to be the union of the slopes of the vertex cycles.  

\p{Section~\ref{section:slope} (cont.): Horizontality} It is sometimes convenient for us to use an alternate formulation of the slope.  Let $f : S \to S$ be a pseudo-Anosov mapping class with stretch factor $\lambda$ and singular Euclidean surface $X$, and let $c$ (or $\tau$) be as above.   We define
\[
\H_f(c) = -\frac{1}{2} \log_\lambda \slope_X(c).
\]
We refer to $\H_f(c)$ as the horizontality of $c$.  We are using an abuse of notation here, as $\H_f$ is not completely determined by $f$.  Indeed, different choices of $X$ lead to horizontality functions that differ by a constant.  For any such choice, horizontality  satisfies the equation
\[
\H_f(f^k(c)) = \H_f(c) + k
\]
for any $k$.  We also prove in Proposition~\ref{prop:boundedslope} that for any set of disjoint curves, arcs, or leaves of a foliation $c$, the set $\H_f(c)$ has diameter bounded above by $\Qcurve = 2K^2$, the key point being that this bound depends only on $S$ (and not on $f$).  

For two sets of real numbers $A$ and $B$ we say that $A < B$ if $\max A < \min B$.  We will use this notion to compare the slopes and horizontalities of the various objects discussed above. 

As an application of our upper bound on $\diam \H_f(c)$, we give a new approach to, and---in the case of a closed surface---a slight improvement of the result of Gadre--Tsai on stable translation distances in the curve graph; see Section~\ref{sec:GT}.

\p{Section~\ref{sec:rec}: Approximating rectangles} In Section~\ref{sec:rec} we prove Lemma~\ref{lem:covering_rectangles}, a technical lemma that will be used to prove the forcing lemma in Section~\ref{sec:forcing}.  The basic idea of the lemma is to approximate a configuration $(\gamma_1,\gamma_2,\ell)$, where the $\gamma_i$ are crossing saddle connections and $\ell$ is a horizontal leaf, by a configuration $(\gamma_1,\gamma_2,R)$, where $R$ is a maximal rectangle.  For the latter to be an approximation, we mean that the $\gamma_i$ cross the horizontal sides of $R$ and that the slope of $R$---defined to be the slope of either diagonal---is not too small.

\p{Section~\ref{sec:forcing}: The forcing lemma} Let $f$, $\F_h$, and $X$ be as above, and let $\tau$ be a filling, transversely recurrent train track in $S$.  We write $c \carried \tau$ and $\F \carried \tau$ to mean that a curve $c$ or a foliation $\F$ is carried by $\tau$, and in this case we write $\tau[c]$ and $\tau[\F]$ for the sub-train track of $\tau$ consisting of all branches traversed by $c$ or $\F$, respectively. 

Suppose that $c$ is a simple closed curve or a leaf of a measured foliation in $S$.  In the forcing lemma (Proposition~\ref{prop:forcing}) we assume that
\[
\H_f(c) \geq \H_f(\tau) + \Qforce.
\]
Under this assumption, the forcing lemma gives the following implications:
\begin{enumerate}
\item If $c \carried \tau$, then $\F_h \carried \tau[c]$.
\item If $\F_h \carried \tau$, then $c$ is carried by a diagonal extension of $\tau[\F_h]$.  
\end{enumerate}
Again, for the relevant train track-related definitions, including carrying and diagonal extensions, see the book by Penner--Harer \cite{PennerHarer92}.

\p{Section~\ref{sec:back}: The fitting lemma} Suppose $f$ is a pseudo-Anosov mapping class and that $\tau_1$ and $\tau_2$ are two train tracks carrying the horizontal foliation $\F_h$.  The fitting lemma (Proposition~\ref{prop:squeeze}) gives an $f$-invariant train track $\nu$---meaning that $f(\nu) \carried \nu$---with two properties:
\begin{enumerate}
\item $\nu \carried \tau_i$ for $i \in \{1,2\}$ and
\item $\H_f(\nu) \leq \H_f(\tau_i) + \Qfit$ for $i \in \{1,2\}$.
\end{enumerate}
The train tracks $\tau_1$ and $\tau_2$ will be Dehn--Thurston train tracks coming from the cell structure on $\MF(S)$.  On an intuitive level, if a train track $\nu$ is carried by $\tau_i$ and has much larger horizontality, then $V(\nu)$ is a very small sub-region of $V(\tau_i)$.  So the $\nu$ in the conclusion of Proposition~\ref{prop:squeeze} is fitted in the sense that $V(\nu)$ is both contained in $V(\tau_i)$ and is large (intuitively, we think of $V(\nu)$ as being almost the size of $V(\tau_i)$, although it may be of smaller dimension).  

\begin{center}
\begin{figure}[hbt!]
\labellist
\small\hair 2pt
\pinlabel {$\H_f(\tau_i)$} [ ] at 80 30
\pinlabel {$\H_f(\nu)$} [ ] at 135 30
\pinlabel {$\H_f(f^Q(c))$} [ ] at 375 30
\endlabellist  
 \includegraphics[scale=0.9]{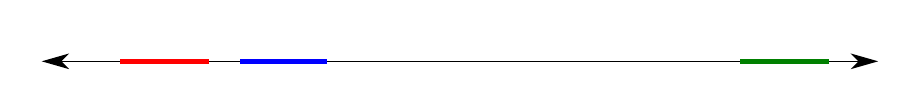}
\caption{In the proof of Theorem~\ref{thm:main}, we choose $Q$ so $\H_f(f^Q(c))$ is very large, we find a coordinate train track $\tau_i$ carrying, $\F_h$, and we choose an $f$-invariant $\nu$ carried by $\tau_i$ with $\H_f(\nu)$ small}\label{fig:balazs}
\end{figure}
\end{center}

\p{Section~\ref{sec:proof}: proof of the main technical theorem} We sketch the idea of the proof of Theorem~\ref{thm:main}.  Choose any curve $c$ lying on a vertex ray of $\MF(S)$.  Our $Q$ and $\Qdt$ are defined exactly so that  
\[
\H_f(f^Q(c)) \geq \H_f(\tau_i) + \Qforce + \Qfit + 1
\]
for all train tracks $\tau_i$ arising in the chosen cell decomposition of $\MF(S)$.  Suppose that $f^Q(c)$ is carried by the coordinate train track $\tau_i$.  By the forcing lemma, $\F_h$ is carried by $\tau_i$.  By the fitting lemma, there is an $f$-invariant train track $\nu$ so that 
\[
\H_f(f^Q(c)) \geq \H_f(\nu) + \Qforce + 1
\]
Then for every curve $d$ near $f^Q(c)$, the forcing lemma (plus the continuity of horizontality) gives a diagonal extension $\nu_d$ of $\nu$ that carries $d$.  Each such $V(\nu_d)$ is a Nielsen--Thurston eigenregion.  Since the $V(\nu_d)$ form an open neighborhood of $f^Q(c)$ in $\MF(S)$, every linear piece for $f$ that contains $f^Q(c)$ must overlap one of the Nielsen--Thurston eigenregions $V(\nu_d)$, and hence is contained in a Nielsen--Thurston eigenregion, as desired.

\p{Acknowledgments} We are grateful to Ian Agol, James Belk, Mark Bell, Joan Birman, Tara Brendle, Martin Bridson, Alex Eskin, Benson Farb, Juan Gonz\'alez-Meneses, Toby Hall, Chris Leininger, Livio Liechti, Scott MacLachlan, Kasra Rafi, Roberta Shapiro, Saul Schleimer, Richard Webb, and Alex Wright for helpful conversations.  Webb in particular suggested that the running time of our algorithm should be polynomial in terms of the complexity of the surface. We are also grateful to several anonymous referees for helpful comments, especially for help with algorithmic complexity issues and especially for pointing out the connection between our work and Thurston's question.


\section{An Example}\label{section:example}

In this section we apply (a version of) our algorithm to a specific example of a pseudo-Anosov mapping class.  Our example lies in the braid group $B_3$.  This group is naturally isomorphic to the mapping class group of $\D_3$, the closed disk with three marked points in the interior. It is also naturally isomorphic to the mapping class group of the surface obtained from a torus by deleting the interior of a closed, embedded disk.  

The standard generating set for $B_3$ is $\Gamma = \{\sigma_1,\sigma_2\}$.  Each generator $\sigma_i$ is a half-twist.  We consider the pseudo-Anosov braid
\[
f=\sigma_2\sigma_1^{-1}\sigma_2\sigma_1\sigma_1\sigma_1
\]
We will use our algorithm to show that the stretch factor of $f$ is
\[
\lambda = \frac{5 + \sqrt{21}}{2}
\]
and to obtain a description of the unstable foliation for $f$ in terms of train tracks.  

Our method in this section diverges from the main algorithm of this paper in that we do not make use of the explicit $Q$ from Theorem~\ref{thm:main}.  Instead, we use a guess-and-check approach.  Specifically, we use the basic computation algorithm to compute $f(c)$ and a triple $(A,V_i,V_j)$ for the action of $f$ at $f(c)$.  Then we find an eigenvector $v$ for $A$ and check directly that it is a PL-eigenvector for $f$ with eigenvalue $\lambda > 1$; it follows that $v$ represents the sink (if $v$ were not a PL-eigenvector, we would proceed by computing the action of $f$ at $f^2(c)$, etc.).  This method always terminates, and in practice it can be more efficient than our main algorithm (since $Q$ is so large).  Our Theorem~\ref{thm:main} guarantees that this method will terminate in quadratic time.

The discussion in this section has one other small difference from our general setup, namely, the fact that $\D_3$ has boundary; on the other hand, we could delete the boundary and consider $f$ as an element of $\Mod(S_{0,4})$ without changing the stretch factor or foliations.  Note that
\[
Q(S_{0,4}) = 2,464+96+1 = 2,561
\]
which is much larger than the power of $f$ required for the guess-and-check method, which in this case is 1.

\subsection{The ledger} We start by setting up the ledger for the $\B_3$-action on $\MF(\D_3) \cong \R^2$.  Since we already defined the generating set $\Gamma$, it remains to choose a cell decomposition $Y$ for $\MF(\D_3)$ and then find the collection of subdivisions $Y_\Gamma$ and the resulting collection of matrices $\A$.  

\p{The cell decomposition} The four train tracks $\tau_1$, $\tau_2$, $\tau_3$, and $\tau_4$ shown in the left-hand side of  Figure~\ref{fig:tx} give a cell decomposition $Y$ of $\MF(\D_3)$ as an integral cone complex.  The top-dimensional cells are the $V_i = V(\tau_i)$, and the faces are given by the rays through four curves shown.  For further explanation of this picture, see the book by Farb and the first author \cite[Chapter 15]{FarbMargalit12}  (the train tracks there are slightly different).  Here we have another difference from our general setup, as the train tracks here are not Dehn--Thurston train tracks; on the other hand these train tracks are especially suited to the braid group.  Also, this is a different structure on $\MF(\D_3) \cong \MF(S_{0,4})$ given in the introduction; the calculation can be carried out just as well with that setup.
  
\begin{figure}[hbt!]
\labellist
\small\hair 2pt
\pinlabel {$\tau_1$} [ ] at 245 244
 \pinlabel {$\tau_4$} [ ] at 251 91
 \pinlabel {$\tau_3$} [ ] at 9 93
 \pinlabel {$\tau_2$} [ ] at 14 246
 \pinlabel {$\scriptstyle{x}$} [ ] at 46 238
 \pinlabel {$\scriptstyle{y}$} [ ] at 36 178
 \pinlabel  {$\scriptstyle{y}$} [ ] at 31 87
 \pinlabel {$\scriptstyle{x}$} [ ] at 39 27
 \pinlabel  {$\scriptstyle{x}$}  [ ] at 220 27
 \pinlabel {$\scriptstyle{y}$}  [ ] at 225 87
 \pinlabel {$\scriptstyle{y}$} [ ] at 219 178
 \pinlabel {$\scriptstyle{x}$} [ ] at 216 238
 
  \pinlabel {$\scriptstyle{y}$}  [ ] at 493 255
 \pinlabel  {$\scriptstyle{x}$}  [ ] at 563 245
 \pinlabel  {$\scriptstyle{x}$} [ ] at 584 152
 \pinlabel {$\scriptstyle{y}$} [ ] at 608 185

 \pinlabel  {$\tau_{1,2}$}  [ ] at 560 265

 \pinlabel  {$\tau_{1,1}$}  [ ] at 586 208
  \pinlabel {$\tau_{2,1}$} [ ] at 337 208
  \pinlabel {$\tau_{2,2}$} [ ] at 370 265
  \pinlabel {$\tau_{3,1}$} [ ] at 330 70
   \pinlabel {$\tiny{\tau_{3,2}}$} [ ] at 363 25
    \pinlabel {$\tau_{4,1}$} [ ] at 620 70
    
  \pinlabel {$\tau_{4,2}$} [ ] at 568 25
 
 \pinlabel {$\scriptstyle{y}$}  [ ] at 612 106
 \pinlabel {$\scriptstyle{x}$} [ ] at 584 108

 \pinlabel {$\scriptstyle{y}$}  [ ] at 490 1
 \pinlabel {$\scriptstyle{x}$} [ ] at 492 14

 \pinlabel {$\scriptstyle{y}$} [ ] at 442 255
 \pinlabel {$\scriptstyle{x}$} [ ] at 377 245

 \pinlabel  {$\scriptstyle{x}$} [ ] at 353 150
 \pinlabel {$\scriptstyle{y}$} [ ] at 328 180

\pinlabel  {$\scriptstyle{y}$} [ ] at 326 111
\pinlabel {$\scriptstyle{x}$}  [ ] at 350 111
 
 \pinlabel {$\scriptstyle{y}$} [ ] at 445 5

 \pinlabel  {$\scriptstyle{x}$}  [ ] at 443 19
\endlabellist  
\includegraphics[width=.925\textwidth]{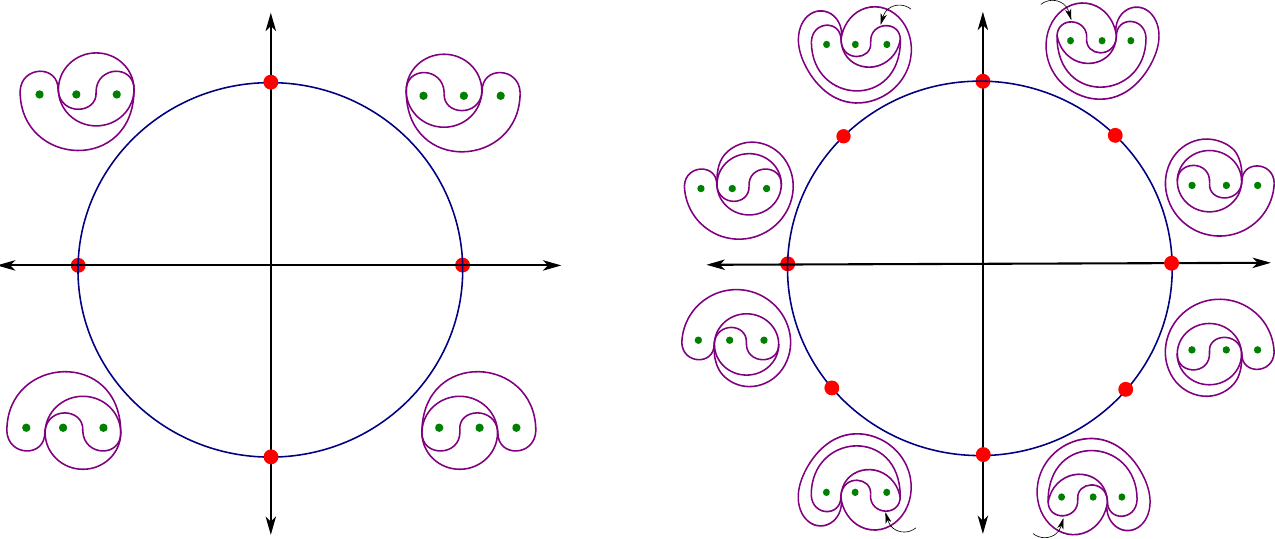}
\caption{The integral cone complex structure $Y$ on $\MF(\D_3)$ and the common refinement of $Y_\Gamma$}\label{fig:tx}
\end{figure}

\p{The subdivisions and the matrices} The eight train tracks $\tau_{1,1}, \tau_{1,2}, \dots, \tau_{4,1},  \tau_{4,2}$ shown in the right-hand side of Figure~\ref{fig:tx} give the common refinement of the collection of subdivisions $Y_\Gamma$ of $Y$ associated to $\Gamma$.  Indeed, for each $f \in \Gamma^{\pm}$ and each $\tau_{ij}$ there is a $k$ so that $f(V_{i,j}) \subseteq V_k$ and $f|V_{i,j}$ is linear.  We illustrate this claim in one case in Figure~\ref{fig:a} (for an example of a similar computation, see again the book by Farb and the first author).  The figure shows $\tau_{1,2}$, $\sigma_1(\tau_{1,2})$, and $\tau_2$, along with the weights corresponding to the $xy$-coordinates on $V_{1,2}$.  We see from the figure that
\[
\sigma_1(V_{1,2}) \subseteq V_2.
\]
For each generator (or inverse of a generator) $h$, the subdivision $Y_h$ lies between $Y$ and the common subdivision.  For instance, the subdivision $Y_{\sigma_1}$ has top-dimensional cells $V_{1,1}$, $V_{1,2}$, $V_2$, $V_3$, and $V_4$.  In each of the other three cases, the subdivision $Y_h$ is again obtained by subdividing one cell of $Y$.  

The calculations that allow us to check the inclusions $f(V_{i,j}) \subseteq V_k$ also allow us to compute the collection of matrices $\A$.  From the weights in Figure~\ref{fig:a} we see that the matrix $A_{1,2}^{\sigma_1}$ describing the action of $\sigma_1$ on $V_{1,2}$ is
\[
A_{1,2}^{\sigma_1} = \left( {\begin{array}{rr}
    -1& 1\\
    1& 0 \\
  \end{array} } \right)
\]
Since the coordinates on $V_{1,2}$ are compatible with the coordinates on $V_2$ (that is, the inclusion is given by the identity matrix), the matrix $A_{1,2}^{\sigma_1}$ does (as required) describe the map $\sigma_1|V_{1,2}$ in terms of the $V_1$- and $V_2$-coordinates.  As such, we can describe the action of $\sigma_1$ on its linear piece $V_{1,2}$ by the triple $(A_{1,2}^{\sigma_1},V_1,V_2)$.  The computations for the other cases are similar. 

From the refinement given in Figure~\ref{fig:tx}, we also obtain notation for each subdivision $Y_{\sigma_i^{\pm 1}}$.  For example, $Y_{\sigma_1}$ is the subdivision given by $V_{1,1}$, $V_{1,2}$, $V_2$, $V_3$, and $V_4$.  While we will not list the subdivisions for the other 7 generators and inverses of generators, the subdivisions are implicit from the calculations below: if we write $A_{1,2}^{\sigma_1}$ this signifies that $V_{1,2}$ is a linear piece for $\sigma_1$, and if we write $A_2^{\sigma_1}$ this signifies that $V_2$ is a linear piece for $\sigma_1$.  

\begin{figure}[hbt!]
\labellist
\small\hair 2pt
\pinlabel  {$x$} [ ] at 153 145
 \pinlabel  {$y$} [ ] at 55 125
 \pinlabel {$\sigma_1$} [ ] at 190 120
 \pinlabel  {$y$} [ ] at 285 123
 \pinlabel  {$x$} [ ] at 375 150
 \pinlabel  {$x$} [ ] at 470 135
 \pinlabel {$y-x$} [ ] at 620 150
\endlabellist  
 \includegraphics[width=.8\textwidth]{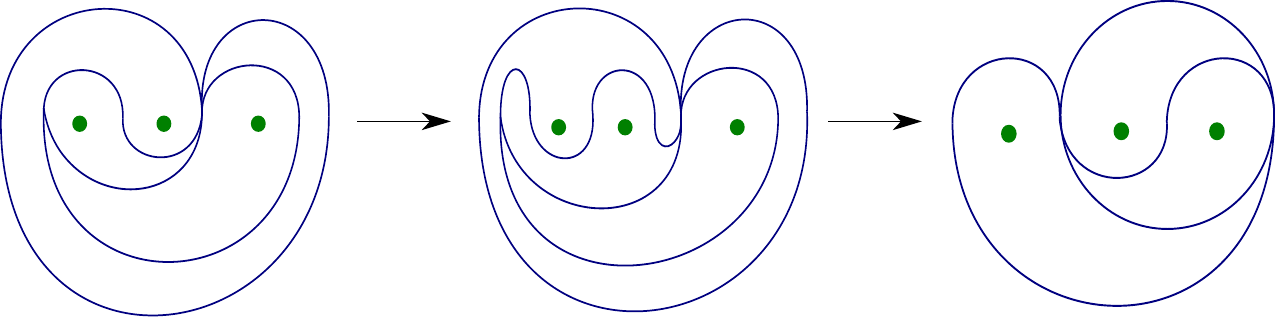}
\caption{A part of the computation of the linear transformation $V_{12} \to V_2$ induced by $\sigma_1$}\label{fig:a}
\end{figure}

\subsection{Applying the algorithm} As above, let $f=\sigma_2\sigma_1^{-1}\sigma_2\sigma_1\sigma_1\sigma_1$.  Let $c$ be the curve described by the vector $(1,2)$ in $V_1$-coordinates (again, contrary to our general setup, this curve does not lie on a vertex ray).  We first use the basic computation algorithm to give a detailed description of how to compute $f(c)$, and then use the modified version of the main algorithm to find the stretch factor and foliations.  

\p{Applying the basic computation algorithm} Thinking of $f$ as a word $w$ in $\Gamma$ we have $f_1 = \sigma_1$, $f_2 = \sigma_1$, etc.  As in the description of the basic computation, we initialize by setting $c_0=c$.  

The following table gives the details of the steps of the basic computation for finding $f(c)$.  In each row we give:
\begin{enumerate}
\item the step number $\ell$, 
\item the curve $c_{\ell-1}$ from the previous step,
\item the cell $V_{i,k}$ of the subdivision $Y_\Gamma$ containing $c_{\ell-1}$,
\item the next letter $f_\ell$ in the word $w$,
\item the codomain $V_j$ for $f_\ell|V_{i,k}$, and
\item the matrix $A_{i,k}^\ell$ so that $f_\ell|V_{i,k}$ is described by $(A_{i,k}^\ell,V_i,V_j)$.
\end{enumerate}

Here is the table:

\renewcommand{\arraystretch}{1.5} 
\begin{center}
 \begin{tabular}{c|llllrr}
 $\ell$ & $c_{\ell-1}$ & $V_{i,k}$ & $f_\ell$ & $V_j$ & $A_{i,k}^\ell$ & $c_\ell$ \\ \hline
 1 & $\left(\begin{smallmatrix} 1\\2\end{smallmatrix}\right)$ & $V_{1,2}$ & $\sigma_1$ & $V_2$ & $\left(\begin{smallmatrix}-1&1\\ \ \ 1& 0\end{smallmatrix}\right)$ & $\left(\begin{smallmatrix} 1\\1\end{smallmatrix}\right)$ \\
2 & $\left(\begin{smallmatrix} 1\\1\end{smallmatrix}\right)$ & $V_2$ & $\sigma_1$ & $V_3$ & $\left(\begin{smallmatrix} 0&1\\1&0\end{smallmatrix}\right)$ & $\left(\begin{smallmatrix} 1\\1\end{smallmatrix}\right)$ \\
3 & $\left(\begin{smallmatrix} 1\\1\end{smallmatrix}\right)$ & $V_3$ & $\sigma_1$ & $V_4$ & $\left(\begin{smallmatrix} 0&1\\1&1\end{smallmatrix}\right)$ & $\left(\begin{smallmatrix} 1\\2\end{smallmatrix}\right)$ \\
4 & $\left(\begin{smallmatrix} 1\\2\end{smallmatrix}\right)$ & $V_4$ & $\sigma_2$ & $V_1$ & $\left(\begin{smallmatrix} 0&1\\1&0\end{smallmatrix}\right)$ & $\left(\begin{smallmatrix} 2\\1\end{smallmatrix}\right)$ \\
5 & $\left(\begin{smallmatrix} 2\\1\end{smallmatrix}\right)$ & $V_1$ & $\sigma_1^{-1}$ & $V_1$ & $\left(\begin{smallmatrix} 1&1\\0&1\end{smallmatrix}\right)$ & $\left(\begin{smallmatrix} 3\\1\end{smallmatrix}\right)$ \\
6 & $\left(\begin{smallmatrix} 3\\1\end{smallmatrix}\right)$ & $V_1$ & $\sigma_2$ & $V_2$ & $\left(\begin{smallmatrix} 0&1\\1&1\end{smallmatrix}\right)$ & $\left(\begin{smallmatrix} 1\\4\end{smallmatrix}\right)$ 
 \end{tabular}
 \end{center}
\renewcommand{\arraystretch}{1} 
(For $\ell > 1$ we use $V_i$ instead of $V_{i,k}$ since the entire $V_i$ is a linear piece for the corresponding $f_\ell$.) From the table, we see that
\[
f(c) = \left({\begin{array}{rr} 1 \\ 4 \\ \end{array} } \right)  \in V_2.
\]

\p{An aside.} We can further see from the above computation that $c$ lies in a linear piece for $f$ described by a triple $(A,V_1,V_2)$, where $A$ is given by the product of the above matrices:
\begin{align*}
A &= A_1^{\sigma_2}A_1^{\sigma_1^{-1}}A_4^{\sigma_2}A_3^{\sigma_1}A_2^{\sigma_1}A_{1,2}^{\sigma_1} \\
&= \left({\begin{array}{rr} 0 & 1 \\ 1 & 1 \\ \end{array} } \right) \left({\begin{array}{rr} 1 & 1 \\ 0 & 1 \\ \end{array} } \right) \left({\begin{array}{rr} 0 & 1 \\ 1 & 0 \\ \end{array} } \right) \left({\begin{array}{rr} 0 & 1 \\ 1 & 1 \\ \end{array} } \right) \left({\begin{array}{rr} 0 & 1 \\ 1 & 0 \\ \end{array} } \right) \left({\begin{array}{rr} -1 & 1 \\ 1 & 0 \\ \end{array} } \right) \\
& = \left({\begin{array}{rr} -1 & 1 \\ -2 & 3 \\ \end{array} } \right) 
\end{align*}
We can check this by observing that
\[
\left({\begin{array}{rr} -1 & 1 \\ -2 & 3 \\ \end{array} } \right) \left({\begin{array}{rr} 1 \\ 2 \\ \end{array} } \right) = \left({\begin{array}{rr} 1 \\ 4 \\ \end{array} } \right) 
\]
We will not use this description of action of $f$ on the linear piece containing $c$; we include it just to illustrate how the computations work.

\p{Iterative-guess-and-check algorithm} We now describe in more detail the guess-and-check algorithm described at the start of the section.  This methid is the underpinning of some of the computer programs that predate our work.  The idea of the algorithm is that, starting with $k=1$, we ``guess'' that $f^k(c)$ lies in a Nielsen--Thurston eigenregion for $f$, and then
\begin{enumerate}
\item compute a triple $(A,V_i,V_j)$ for the action of $f$ on a linear piece containing the curve $f^k(c)$, 
\item find pairs $(\lambda,v)$ where $\lambda > 1$, where $v$ lies in $V_i \cap V_j$, and where $f(v) = \lambda v$ (if $V_i=V_j$ such $v$ can be computed as classical eigenvectors), and
\item check if one pair $(\lambda,v)$ with $\lambda > 1$ and $v \geq 0$ is a PL-eigenvalue/eigenvector pair for $f$.
\end{enumerate}
Since $f$ has exactly one PL-eigenvalue/eigenvector pair with $\lambda > 1$ (up to scale), the last step implies that $\lambda$ is the stretch factor and $v$ represents the unstable foliation for $f$.  If either the second or third step fails (meaning no pairs are found in the second step, or it is found in the third step that they are not PL-eigenvectors), we return to the first step, replacing $f^k(c)$ with $f^{k+1}(c)$.  If $f$ is pseudo-Anosov, then this process eventually terminates, since $f^k(c)$ must lie in a Nielsen--Thurston eigenregion for $k$ large.

Again, the complexity of the iterative-guess-and-check algorithm is impossible to determine without an upper bound on the power of $f$ required.  Our Theorem~\ref{thm:main} exactly gives such an upper bound, namely $Q$.  

\p{Applying the iterative-guess-and-check algorithm} We now apply the above algorithm to our specific $f \in \B_3$.  As we will see the algorithm terminates for $k=1$.  So this process terminates faster than if we were to apply the main algorithm, which would require computing $f^Q(c)$ with $Q = 2561$.  

\medskip

\noindent (1) For the first step we apply the basic computation algorithm to the action of $f$ on $f(c)$ and find the resulting acting matrix:
\begin{align*}
A &= A_{2}^{\sigma_2}A_{3}^{\sigma_1^{-1}}A_{4}^{\sigma_2}A_{4}^{\sigma_1}A_{3}^{\sigma_1}A_{2}^{\sigma_1} \\
&= \left({\begin{array}{rr} 0 & 1 \\ 1 & 0 \\ \end{array} } \right) \left({\begin{array}{rr} 0 & 1 \\ 1 & 1 \\ \end{array} } \right) \left({\begin{array}{rr} 1 & 1 \\ 0 & 1 \\ \end{array} } \right) \left({\begin{array}{rr} 0 & 1 \\ 1 & 0 \\ \end{array} } \right) \left({\begin{array}{rr} 1 & 1 \\ 0 & 1 \\ \end{array} } \right) \left({\begin{array}{rr} 0 & 1 \\ 1 & 1 \\ \end{array} } \right) \\
&= \left({\begin{array}{rr} 3 & 5 \\ 1 & 2 \\ \end{array} } \right) 
\end{align*}
This computation is done in the same way as the computation of $f(c)$.  We do not include the corresponding table, but this can be inferred from the given product.  During the computation, the curve $f(c) \in V_2$ gets mapped to $V_3$, to $V_4$, to $V_4$, to $V_1$, again to $V_1$, and back $V_2$, in that order.  In particular $f(f(c))$ lies in $V_2$.  This means that the result of the first step is the triple:
\[
\left( \left( {\begin{array}{cc}
    3& 5\\
    1& 2 \\
  \end{array} } \right) , V_2, V_2   \right) 
\]

\medskip

\noindent (2) We now proceed to the second step.  Since we have $V_i=V_j$, and we are guessing that $f(f(c))$ lies in a Nielsen--Thurston eigenregion for $f$, we compute the eigenvalue $\lambda > 1$ and corresponding eigenvector for the matrix $A$ found in the first step.  We find
\[
\lambda = \frac{5 + \sqrt{21}}{2} \quad \text{and} \quad v = \left( {\begin{array}{cc}
    1 + \sqrt{21}\\
    2 
  \end{array} } \right).
\]

\medskip

\noindent (3) For the third and final step, we again use the basic computation algorithm to compute the action of $f$ on a linear piece containing the eigenvector found in the last step.  This time the computation turns out to be exactly the same as the computation in the first step, as the curves computed during the computation lie in exactly the same linear pieces for the six letters in the word for $f$.  It follows that the pair $(\lambda,v)$ found above is a PL-eigenvalue/eigenvector pair for $f$.  As above, this means that $\lambda$ is the stretch factor for $f$ and the measured train track $(\tau_2,v)$ represents the unstable foliation.  

In order to find coordinates for the stable foliation for $f$, we can repeat the whole process, except with $f$ replaced by $f^{-1}$.  We find that the stable foliation is represented by $(-3+\sqrt{21} , 6)$ in $V_1$-coordinates.


\section{Slope}
\label{section:slope}

The main goal of this section is to prove Proposition~\ref{prop:boundedslope}, which bounds the diameter of the horizontality of a curve or leaf of a foliation.  We also introduce other tools related to slope and horizontality.  At the end of the section we give our application of slope to translation lengths in the curve graph, Corollary~\ref{cor:Lip}.

For the statement of Proposition~\ref{prop:boundedslope}, recall from the introduction that $\Qcurve(S_{g,p}) = 2K^2$ and $K=K(S_{g,p}) = 4(3g-3_p))$. Also recall that $\H_f$ is the horizontality function coming from any singular Euclidean surface associated to a pseudo-Anosov $f$.  

\begin{proposition}
\label{prop:boundedslope}
Let $S=S_{g,p}$ and let $f \colon S \to S$ be a pseudo-Anosov homeomorphism with stretch factor $\lambda$ and a singular Euclidean structure $X$.  If $Z$ is a set of pairwise non-crossing saddle connections in $\bar X$, then
\[
\diam \H_f(Z) \leq \Qcurve(S).
\]
\end{proposition}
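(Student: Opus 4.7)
The plan is to linearly order $Z$ by horizontality, rewrite $\diam\H_f(Z)$ as a telescoping sum, and bound the cardinality of $Z$ and the size of each consecutive jump separately. First I would sort $Z = \{\sigma_1,\ldots,\sigma_m\}$ so that $\H_f(\sigma_1)\le\cdots\le \H_f(\sigma_m)$, giving
\[
\diam \H_f(Z) \;=\; \sum_{i=1}^{m-1}\bigl(\H_f(\sigma_{i+1})-\H_f(\sigma_i)\bigr).
\]
It then suffices to prove bounds of the form $m-1 \le C_1 K$ and $\H_f(\sigma_{i+1})-\H_f(\sigma_i) \le C_2 K$ with $C_1 C_2 \le 2$.

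Next I would bound $m$ by a topological count. Two distinct saddle connections with disjoint interiors represent distinct isotopy classes of arcs between points of $\Sigma$, since they are flat geodesics. Hence $Z$ can be extended to an ideal triangulation of $\bar X$ relative to $\Sigma$, which has exactly $6g-6+3|\Sigma|$ edges. Every point of $\Sigma$ is a singularity of $\F_h$ (and of $\F_v$) or a puncture, and in either case carries at least one separatrix of $\F_h$, so $|\Sigma|\le K$ by the separatrix-complexity bound applied to $\F_h$. Combining, $m \le 6g-6+3K = O(K)$.

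The main obstacle is bounding each consecutive horizontality jump by $O(K)$. My plan is to exploit the flat structure $X$. For two non-crossing slope-consecutive saddle connections $\sigma,\sigma'$, the goal is to compare $\slope_X(\sigma)$ and $\slope_X(\sigma')$ via an immersed flat rectangle in $\bar X$ (in the spirit of the approximating rectangles of Section~\ref{sec:rec}) whose horizontal sides are pierced by $\sigma$ and $\sigma'$; the ratio $\slope_X(\sigma')/\slope_X(\sigma)$ should then be controlled by the number of separatrices of $\F_h$ (equivalently $\F_v$) that meet the interior of that rectangle, each of which forces a bounded multiplicative change in slope. Since $\F_h$ has at most $K$ separatrices, the slope ratio is at most $\lambda^{O(K)}$, giving a horizontality jump of $O(K)$.

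Combining these bounds yields $\diam\H_f(Z) \le (m-1)\cdot O(K) \le 2K^2$. The first two steps are essentially formal: the sort reduces the problem to consecutive pairs, and the cardinality bound is a standard Euler-characteristic count. The heart of the proof, and the main obstacle, is the geometric lemma bounding the horizontality jump between slope-consecutive non-crossing saddle connections; this is where the approximating-rectangle machinery of the following section must do the real work, and where a careful bookkeeping of separatrices inside the maximal rectangle will determine the explicit constants $C_1, C_2$ that make the product $C_1C_2 \le 2$ work out exactly.
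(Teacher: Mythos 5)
Your high-level accounting --- a factor of at most $K$ from a combinatorial count times a factor of $O(K)$ in horizontality per step --- is indeed the shape of the paper's proof, and your cardinality bound on $Z$ is fine. But the decomposition is placed in the wrong spot, and this creates a genuine gap. Telescoping over $Z$ sorted by slope reduces the problem to consecutive pairs, and when $|Z|=2$ your ``per-jump'' bound \emph{is} the whole proposition: you are then claiming that any two non-crossing saddle connections have horizontality difference $O(K)$, which is strictly stronger than what the proposition asserts ($O(K^2)$) and is not something you prove. Your proposed mechanism --- a single rectangle whose horizontal sides are pierced by both $\sigma$ and $\sigma'$ --- need not exist: two disjoint saddle connections can lie in entirely different parts of the surface, and the approximating-rectangle machinery of Section~\ref{sec:rec} requires the two geodesics to \emph{cross} each other and a common horizontal leaf, which non-crossing saddle connections do not. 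Sorting by slope does not force geometric adjacency.

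The paper instead puts the chain \emph{inside} the pairwise bound (Lemma~\ref{lemma:global-disparity}): given two non-crossing saddle connections, first replace each by a rectangle-spanning saddle connection in its rectangle hull with comparable slope (Lemma~\ref{lemma:minskytaylor} --- a reduction you also omit), then extend the pair to a triangulation of $\bar X$ by rectangle-spanning saddle connections. That triangulation has at most $K$ triangles (each separatrix initially enters a unique triangle), so there is an edge path of length at most $K$ from one to the other in which consecutive edges share a triangle, hence a rectangle. The per-rectangle comparison is then Lemma~\ref{lemma:local-disparity}: two saddle connections in a common rectangle have slope ratio at most $F^2$ with $F = 2\flex(X)-1$. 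Your heuristic that the jump is controlled by ``separatrices meeting the interior of the rectangle'' is off --- rectangles contain no interior singularities by definition --- and the actual quantitative inputs, which you never invoke, are the flexibility bound $\flex(X) \le \lambda^K$ (Lemma~\ref{lemma:flexibility}) and Penner's inequality $\lambda^K \ge 2$, which together convert $F^{2K}$ into $(2\lambda^K)^{2K} \le \lambda^{4K^2}$, i.e.\ a horizontality diameter of $2K^2 = \Qcurve$. Without these two inputs there is no way to make your constants $C_1, C_2$ come out, and without the chain-through-a-triangulation idea there is no proof of the per-step bound at all.
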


In our applications of Proposition~\ref{prop:boundedslope} we will take $Z$ to be the set of saddle connections appearing in the (flat) geodesic representative $c_X$ of a curve $c$ or a leaf of a foliation in $S$.  This makes sense because the set of saddle connections appearing in the geodesic representative $c_X$ of a curve or a leaf are pairwise non-crossing.  More generally, Minsky and the third-named author of this paper proved that if $c$ and $d$ are two disjoint curves/leaves, then the geodesic representatives $c_X$ and $d_X$ consist of pairwise non-crossing saddle connections; see \cite[Lemma 4.3]{MinskyTaylor}.   

\p{Outline of the section} We begin in Section~\ref{sec:slope_prelim} with some preliminary definitions: completed universal covers, rectangles, and rectangle flexibility.  The latter, denoted $\flex(X)$, describes the amount that rectangles in a singular Euclidean surface can be extended in the horizontal and vertical directions.  

Next, in Section~\ref{sec:slope_disparity} we prove several lemmas that compare slopes of saddle connections in a fixed singular Euclidean surface.  We first prove Lemma~\ref{lemma:local-disparity}, a local slope disparity lemma, which bounds in terms of $\flex(X)$ the disparity between the slopes of saddle connections in a single rectangle (we obtain Lemma~\ref{lemma:local-disparity} as a consequence of Lemma~\ref{lem:slopeflex}, which compares the slope of a saddle connection to the slope of a rectangle containing it).  We then promote Lemma~\ref{lemma:local-disparity} to Lemma~\ref{lemma:global-disparity}, a global slope disparity lemma, which bounds in terms of $\flex(X)$ the disparity between the slopes of any non-crossing saddle connections in $X$.  

In Section~\ref{sec:slope_stretch} we consider singular Euclidean surfaces together with associated pseudo-Anosov maps.  In Lemma~\ref{lemma:flexibility} we relate $\flex(X)$ to the stretch factor of a pseudo-Anosov map.  We then turn to the proof of Proposition~\ref{prop:boundedslope}.  Since the slope of a curve is the set of slopes of a collection of non-crossing saddle connections, Proposition~\ref{prop:boundedslope} will follow from Lemmas~\ref{lemma:global-disparity} and~\ref{lemma:flexibility}.

Finally, in Section~\ref{sec:GT}, apply the notion of horizontality to prove Corollary~\ref{cor:Lip}.  This corollary bounds from below the stable translation length of the action of a pseudo-Anosov mapping class on the corresponding curve graph, extending the work of Gadre--Tsai. 


\subsection{Preliminaries}
\label{sec:slope_prelim}

In this section, we introduce here the definitions of completed universal covers, rectangles, and rectangle flexibility, along with several related notions.  

\p{Completed universal covers} Let $X$ be a singular Euclidean surface and let $\bar X$ be the associated closed singular Euclidean surface, obtained from $X$ by filling in the punctures.  As in the work of Minsky and the third-named author, we denote by $\wh{X}$ the metric completion of the universal cover of $X$.  We refer to $\wh{X}$ as the completed universal cover of $X$.  There is an induced projection $\wh{X} \to \bar X$.  This map can be regarded as a branched cover, with infinite-fold ramification over the points of $\bar X \setminus X$.  

The universal cover $\wt X$ can be identified with $\HH^2$, and as such the completed universal cover $\wh{X}$ can be identified with a subset (but not a subspace!) of the visual compactification of $\HH^2$.  The space $\wt{X}$ is a CAT(0) space.  In particular, given a pair of points in $\wt{X}$, there is a unique geodesic connecting them.  This geodesic may pass through arbitrarily many points of $\wh{X} \setminus \wt X$.  

\p{Rectangles and saddle connections}  Let $X$ be a singular Euclidean surface with horizontal and vertical foliations $\F_h$ and $\F_v$.   A rectangle in $X$ or $\bar X$ is defined to be an isometric immersion $R \colon E \to \bar X$, where
\begin{enumerate}
\item $E$ is a Euclidean rectangle,
\item each horizontal/vertical segment in $E$ maps to a horizontal/vertical segment, and
\item $R^{-1}(\Sigma) \subseteq \partial E$(recall $\Sigma$ is the set of singular points).
\end{enumerate} 
While $R$ is an immersion, it is always possible to lift $R$ to $\wh X$ and any such lift will be an embedding.  In all of our figures, the rectangles will be shown as embedded; we can (safely) imagine that these pictures describe lifts to $\wh X$.

By a diagonal of $R$ we mean the $R$-image of a diagonal of $E$.   When we say that a point $x \in X$ lies in the boundary or interior of $R$, we mean that $R^{-1}(x)$ lies in the boundary or interior of $E$, respectively.  The height and width of $R$ are defined to be the height and width of $E$.  

We say that a saddle connection $\sigma$ in $\bar X$ is rectangle spanning if it is the diagonal of a rectangle.  In this case the rectangle is denoted $R_\sigma$ and is called the spanning rectangle for $\sigma$.  The height and width of a rectangle-spanning saddle connection are defined to be the height and width of its spanning rectangle.  

Minsky and the third-named author proved that if $X$ a singular Euclidean surface associated to a pseudo-Anosov mapping class $f$, the triangulations of $X$ by rectangle-spanning saddle connections are exactly the sections of Agol's veering triangulation  \cite[Section 3]{MinskyTaylor}.  

\p{Slopes of rectangles and flexibility}  Let $X$ be a singular Euclidean surface.  As in the introduction, we define the slope of a rectangle $R \subseteq X$ as $\slope(\delta)$, where $\delta$ is either diagonal of $R$ (the diagonals have equal slope).  Also, we say that a rectangle $\hat R$ is an extension of $R$ if it is obtained by extending $R$ in one direction (up, down, left, or right).  Finally, we define the rectangle flexibility of $X$ as
\[
  \flex(X) = \sup_{(\sigma, \hat R_\sigma)} \max\left\{\frac{\slope(\sigma)}{\slope(\hat R_\sigma)}, \frac{\slope(\hat R_\sigma)}{\slope(\sigma)}\right\},
\]
where $\sigma$ is a rectangle-spanning saddle connection in $X$ and $\hat R_\sigma$ is an extension of the spanning rectangle $R_\sigma$.

\p{Finite flexibility versus the existence of horizontal and vertical saddle connections} For a singular Euclidean surface $X$ we have the following implication:
\[
\flex(X) < \infty \Longrightarrow \bar X \text{ has no horizontal or vertical saddle connection}
\]
Much of our work in this section relies on the work of Minsky and the third-named author of this paper \cite{MinskyTaylor}.  In their work on singular Euclidean surfaces, they assume the surfaces do not have any horizontal or vertical saddle connections.  As a proxy for this assumption, we will assume the condition that $X$ has finite flexibility.  It is a straightforward consequence of the definition of flexibility that this implies the non-existence of vertical and horizontal saddle connections.  The converse of the implication is also true, but we will not require it.  

A singular Euclidean structure for a pseudo-Anosov homeomorphism has no vertical or horizontal saddle connections.  Such a structure also has finite flexibility by Lemma~\ref{lemma:flexibility} (or by the converse of the above implication).  We will use this without mention below in order to apply our lemmas about singular Euclidean structures with finite flexibility to singular Euclidean structures associated to pseudo-Anosov homeomorphisms. 

While Lemmas~\ref{lemma:local-disparity}, \ref{lem:slopeflex}, and~\ref{lemma:global-disparity} hold vacuously in the case where the flexibility is infinite, we state them in the case of finite flexibility for the sake of clarity.


\subsection{Slope disparity lemmas}
\label{sec:slope_disparity}

The main goals of this section are to prove Lemmas~\ref{lemma:local-disparity} and~\ref{lemma:global-disparity}, the local and global slope disparity lemmas.  In what follows, we will write $F=F(X)$ for the quantity $2\flex(X)-1$; this quantity will appear often in what follows.  

\p{Local slope disparity} The next lemma bounds the disparity between slopes of saddle connections in a given rectangle.

\begin{lemma}
\label{lemma:local-disparity}
Let $X$ be a singular Euclidean surface with finite flexibility $F=F(X)$, let $R$ be a rectangle in $X$, and let $\sigma_0$ and $\sigma_1$ be saddle connections contained in $R$.  Then
  \begin{displaymath}
    \slope(\sigma_0) \leq F^2 \slope(\sigma_1).
  \end{displaymath}
\end{lemma}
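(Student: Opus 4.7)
The plan is to compare both $\sigma_0$ and $\sigma_1$ to the reference rectangle $R$ via their spanning rectangles. I will assume $\flex(X) < \infty$, since otherwise $F^2 = \infty$ and the claim is vacuous; then no saddle connection is horizontal or vertical. For each $i$, define $R_{\sigma_i}$ to be the axis-aligned sub-rectangle of $R$ whose opposite corners are the endpoints of $\sigma_i$. Because the interior of $R_{\sigma_i}$ sits inside the interior of $R$, it contains no singularities, so $R_{\sigma_i}$ is a genuine rectangle with $\sigma_i$ as a diagonal; in particular $\slope(\sigma_i) = \slope(R_{\sigma_i})$.

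The heart of the argument will be to show that $\slope(R_{\sigma_i})/\slope(R)$ and its reciprocal are both at most $F$. Writing $w, h$ for the width and height of $R$ and $w_i, h_i$ for those of $R_{\sigma_i}$, let $a$ and $b$ denote the left and right horizontal gaps between $R_{\sigma_i}$ and $R$. The extension of $R_{\sigma_i}$ leftward as far as the left edge of $R$ is a rectangle of width $w_i + a$ and height $h_i$ whose interior still lies in the interior of $R$; it is therefore a legitimate single-direction extension of the spanning rectangle, and the definition of $\flex(X)$ gives $(w_i + a)/w_i \leq \flex(X)$, i.e.\ $a \leq (\flex(X)-1)\, w_i$. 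The symmetric argument on the right yields $b \leq (\flex(X)-1)\, w_i$, so that
\[
w \;=\; w_i + a + b \;\leq\; (2\flex(X) - 1)\, w_i \;=\; F\, w_i,
\]
and an identical argument in the vertical direction produces $h \leq F\, h_i$. Combined with the obvious $h_i \leq h$ and $w_i \leq w$, the ratio $\slope(R_{\sigma_i})/\slope(R) = (h_i/h)(w/w_i)$ is at most $F$, with the reciprocal bounded analogously.

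The lemma then follows at once from the chain
\[
\slope(\sigma_0) = \slope(R_{\sigma_0}) \;\leq\; F\, \slope(R) \;\leq\; F^2\, \slope(R_{\sigma_1}) = F^2\, \slope(\sigma_1).
\]
The step that will require the most care is confirming that the extended rectangles used to invoke $\flex(X)$ are genuine rectangles in the sense of Section~\ref{sec:slope_prelim}---namely, that their interiors are free of singularities---but this follows cleanly from their containment in the singularity-free interior of $R$. The argument never compares $\sigma_0$ and $\sigma_1$ directly; each is measured against the common reference rectangle $R$, which is precisely why $F^2$ rather than $F$ is the natural bound.
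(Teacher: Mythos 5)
Your argument is correct and is essentially the paper's: the paper proves the same two-sided bound $F^{-1}\leq \slope(\sigma_i)/\slope(R)\leq F$ (Lemma~\ref{lem:slopeflex}) by applying the definition of $\flex(X)$ to one-sided extensions of the spanning rectangle and adding widths, which is exactly your computation $w = w_i + a + b \leq (2\flex(X)-1)w_i$. The only cosmetic differences are that the paper routes through the maximal horizontal extension $R_\sigma^h$ and invokes symmetry for the reciprocal bound, whereas you extend only to the edges of $R$ and treat both directions explicitly.
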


Lemma~\ref{lemma:local-disparity} is an immediate consequence of the following lemma, which compares the slope of a rectangle to the slopes of saddle connections contained within.

\begin{lemma}
\label{lem:slopeflex}
Let $X$ be a singular Euclidean surface with finite flexibility $F=F(X)$, and let $R$ be a rectangle in $X$ and $\sigma$ a saddle connection in $R$.  Then
\[
\frac{1}{F} \leq \frac{\slope(\sigma)}{\slope(R)} \leq F.
 \]
\end{lemma}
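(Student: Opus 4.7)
The plan is to bound the disparity between $\slope(R)$ and $\slope(\sigma)$ by comparing $R$ with the spanning rectangle $R_\sigma$ of $\sigma$, which sits as a sub-rectangle of $R$, and then invoking the definition of $\flex(X)$ one direction at a time.

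First I would set up coordinates. Since the endpoints of $\sigma$ are singular points lying in $R$, they lie on $\partial R$, and $R_\sigma$ is aligned with the horizontal and vertical foliations. Writing $E$ for the domain of the immersion $R$, I can choose coordinates on $E$ so that the sub-rectangle corresponding to $R_\sigma$ is $E_\sigma = [0, w] \times [0, h]$ and $E = [-a, w+c] \times [-b, h+d]$ for some nonnegative $a, b, c, d$. In these coordinates, $\slope(\sigma) = h/w$ and $\slope(R) = H/W$ with $W = w + a + c$ and $H = h + b + d$.

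The main step is to bound each of $a, b, c, d$ using a one-direction extension of $R_\sigma$ inside $E$. For instance, the sub-rectangle $\hat E_\sigma = [0, w+c] \times [0, h] \subseteq E$ has interior disjoint from $\partial E$, so the restriction $\hat R_\sigma = R|_{\hat E_\sigma}$ picks up no singular preimages in its interior (using condition~(3) of the rectangle definition for $R$) and is a valid rectangle extending $R_\sigma$ to the right. Applying the definition of $\flex(X)$ to the pair $(\sigma, \hat R_\sigma)$ gives
\[
\frac{w+c}{w} \;=\; \frac{\slope(\sigma)}{\slope(\hat R_\sigma)} \;\leq\; \flex(X),
\]
so $c \leq (\flex(X) - 1)\,w$. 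The analogous extensions to the left, up, and down yield $a \leq (\flex(X) - 1)\,w$, $d \leq (\flex(X) - 1)\,h$, and $b \leq (\flex(X) - 1)\,h$. Summing, $w \leq W \leq F\,w$ and $h \leq H \leq F\,h$.

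Combining these estimates,
\[
\frac{\slope(\sigma)}{\slope(R)} \;=\; \frac{h}{H} \cdot \frac{W}{w} \;\leq\; 1 \cdot F \;=\; F,
\]
and the reverse bound $\slope(\sigma)/\slope(R) \geq 1/F$ follows symmetrically. The only subtle point is verifying that each one-sided extension really is a valid rectangle, which reduces to the observation that the interior of the extended sub-rectangle of $E$ lies inside the interior of $E$, where $R$ has no singular preimages; degenerate cases in which some of $a, b, c, d$ vanish cause no trouble because the corresponding flex inequality is then trivial.
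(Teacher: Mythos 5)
Your proof is correct and is essentially the paper's argument: both bound the one-sided overhangs of $R$ beyond the spanning rectangle $R_\sigma$ by $(\flex(X)-1)$ times the corresponding dimension of $R_\sigma$, via the definition of $\flex$ applied to one-direction extensions, and the inclusion–exclusion $w+a+c\leq (2\flex(X)-1)w$ is exactly the paper's $\width(R_\sigma^+)+\width(R_\sigma^-)-\width(R_\sigma)\leq 2\flex(X)-1$. The only cosmetic difference is that the paper routes through the maximal horizontal extension $R_\sigma^h$ and the monotonicity $\slope(R_\sigma^h)\leq\slope(R)$, whereas you work directly in coordinates on $R$ and discard the height factor $h/H\leq 1$.
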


\begin{proof}

We prove the second inequality.  The first is proved with a symmetric argument.  Let $R_\sigma^h$ be the maximal rectangle obtained by extending the spanning rectangle $R_\sigma$ horizontally in both directions.  Note that $\sigma$ connects the horizontal edges of $R_\sigma^h$. 

As $\slope(R_\sigma^h) \leq \slope(R)$ and $F=2\flex(X) -1$, it suffices to prove that
\[
\frac{\slope(\sigma)}{\slope(R_\sigma^h)} \leq 2\flex(X) -1.
\]
Since ratios of slopes are invariant under horizontal and vertical scaling, we may assume for simplicity that
\[
\height(R_\sigma) = \width(R_\sigma) = 1.
\]
Let $R_\sigma^-$ and $R_\sigma^+$ be the maximal extensions of $R_\sigma$ to the left and right.  By definition, we have
\[
\width(R_\sigma^\pm) \leq \flex(X).
\]
If $E$, $E^h$, $E^-$, and $E^+$ are the domains of $R_\sigma$, $R_\sigma^h$, $R_\sigma^-$, and $R_\sigma^+$, respectively, then $E^- \cup E^+ = E^h$ and $E^- \cap E^+ = E$.  Therefore
\begin{align*}
\frac{\slope(\sigma)}{\slope(R_\sigma^h)} &= \frac{1}{\slope(R_\sigma^h)} = \width(R_\sigma^h) \\
&= \width(R_\sigma^+) + \width(R_\sigma^-) - \width(R_\sigma) 
\leq 2\flex(X)-1,
\end{align*}
as desired.  
\end{proof}

\p{Global slope disparity} Our next goal is to prove the following lemma, which bounds the disparity between slopes of non-crossing saddle connections. In the statement, the constant $K=K(S_{g,p})$ is the number $4(3g-3+p)$ from the introduction, namely, our upper bound for the number of separatrices in a foliation on $S_{g,p}$,

\begin{lemma}
\label{lemma:global-disparity}
Let $X$ be a singular Euclidean surface with finite flexibility $F=F(X)$.  If $\sigma_0$ and $\sigma_1$ are non-crossing saddle connections in  $X$, then
\[
    \slope(\sigma_0) \leq F^{2K} \slope(\sigma_1).
\]
\end{lemma}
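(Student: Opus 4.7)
The plan is to reduce the global statement to Lemma~\ref{lemma:local-disparity} via a chaining argument. Concretely, I will produce a finite chain of saddle connections $\sigma_0 = \tau_0, \tau_1, \ldots, \tau_m = \sigma_1$ in $\bar X$ with $m \leq K$, such that each consecutive pair $\tau_{i-1}, \tau_i$ is contained in a common rectangle of $X$. Telescoping the local inequalities then yields
\[
\slope(\sigma_0) \leq F^{2}\slope(\tau_1) \leq F^{4}\slope(\tau_2) \leq \cdots \leq F^{2m}\slope(\sigma_1) \leq F^{2K}\slope(\sigma_1),
\]
which is the desired bound.

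To construct the chain, I would first extend $\{\sigma_0, \sigma_1\}$ to a maximal collection $\mathcal{T}$ of pairwise non-crossing saddle connections in $\bar X$, yielding a triangulation of $\bar X$ with all vertices in $\Sigma$. Combining the Euler characteristic of $\bar X$ with the Euler--Poincar\'e bound on the number of prongs at singularities of the quadratic differential underlying $X$, the number of triangles of $\mathcal{T}$ can be bounded in terms of $K$. A shortest path in the dual graph of $\mathcal{T}$ from a triangle containing $\sigma_0$ to a triangle containing $\sigma_1$ then produces, via the shared edges of consecutive triangles, a chain $\tau_0, \ldots, \tau_m$ with $m \leq K$.

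The main obstacle is the geometric claim that any two saddle-connection sides of a single triangle of $\mathcal{T}$ lie in a common rectangle of $X$. The idea is to work in $\wh{X}$: lifting the triangle, its three saddle-connection sides meet at vertices in the preimage of $\Sigma$. Since $X$ has no horizontal or vertical saddle connections---a consequence of $\flex(X) < \infty$---each side has nonzero, finite slope. An axis-aligned rectangle in $\wh{X}$ circumscribing the lifted triangle then contains all three sides, and its image in $X$ is a rectangle in the sense of Section~\ref{sec:slope_prelim} containing the chosen pair of saddle connections. Once this local geometric step is verified, the lemma follows from the chain-and-telescope argument above.
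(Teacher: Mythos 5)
Your overall chaining strategy matches the paper's, but the step you flag as the ``main obstacle'' is a genuine gap, and it is exactly the point the paper's proof is structured to avoid. A rectangle in this paper is by definition an isometric immersion of a Euclidean rectangle whose interior avoids $\Sigma$. If you take an arbitrary triangle of an arbitrary saddle-connection triangulation, the triangle itself is singularity-free by Gauss--Bonnet, but its axis-aligned circumscribing box is strictly larger, and the part of that box outside the triangle can contain singularities; indeed, in $\wh X$ (a CAT(0) space with cone angles exceeding $2\pi$) the ``bounding box'' is not even a well-defined subset -- one must develop it, and the development can run into cone points. So the circumscribed rectangle generally fails condition (3) of the definition, and Lemma~\ref{lemma:local-disparity} cannot be applied to two sides of such a triangle. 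Your chain therefore breaks at every link unless the triangulation is of a special kind.

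The paper's proof handles this in two stages. First it treats the case where $\sigma_0$ and $\sigma_1$ are \emph{rectangle-spanning}: by Minsky--Taylor, non-crossing rectangle-spanning saddle connections extend to a triangulation by rectangle-spanning saddle connections (a section of the veering triangulation), and for such triangulations each triangle genuinely sits inside a singularity-free rectangle, so the local lemma applies along a dual-graph chain of length at most $K$. Second, for arbitrary $\sigma_0,\sigma_1$ it invokes Lemma~\ref{lemma:minskytaylor} to produce rectangle-spanning saddle connections $\sigma_i^{\pm}\in\hull(\sigma_i)$ with $\slope(\sigma_i^-)\le\slope(\sigma_i)\le\slope(\sigma_i^+)$, uses the Minsky--Taylor fact that non-crossing of $\sigma_0,\sigma_1$ forces the elements of $\hull(\sigma_0)\cup\hull(\sigma_1)$ to be pairwise non-crossing, and then applies the special case to $\sigma_0^+$ and $\sigma_1^-$; the hull comparisons cost no extra factor of $F$, so the bound remains $F^{2K}$. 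To repair your argument you would need both of these ingredients: you cannot work with a maximal collection of arbitrary non-crossing saddle connections, and you need the rectangle-hull reduction to get from general saddle connections to rectangle-spanning ones.
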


Our proof of Lemma~\ref{lemma:global-disparity} requires a subordinate lemma, Lemma~\ref{lemma:minskytaylor} below.  Before stating that lemma, we recall some preliminaries from the work of Minsky and the third-named author, in particular the notion of a rectangle hull; see \cite[Section 4.1]{MinskyTaylor}.  We also introduce the notion of a rectangle associated to a point.  

Let $X$ be a singular Euclidean surface with finite flexibility.  For any saddle connection $\sigma$ of $X$, let $\mathcal R(\sigma)$ be the collection of rectangles in $X$ that are maximal with respect to the property that their diagonals lie along $\sigma$.  We refer to $\mathcal R(\sigma)$ as the rectangle casing for $\sigma$.

The rectangle hull $\hull(\sigma)$ is defined to be the collection of saddle connections contained in some rectangle of $\mathcal R(\sigma)$. For example, the rectangle hull of the saddle connection in Figure~\ref{fig:enlarging-fitted-rectangles} contains the four saddle connections that join the singularities in the boundary of $R^+$ or the boundary of $R^-$.  Every rectangle hull is connected (the rectangles in $\mathcal R(\sigma)$ are ordered along $\sigma$ and consecutive rectangles share a singularity).  If $\sigma_0$ and $\sigma_1$ are non-crossing saddle connections, then the saddle connections of $\hull(\sigma_0) \cup \hull(\sigma_1)$ are also non-crossing \cite[Lemma 4.1]{MinskyTaylor}.

If $C$ is a singular point in a rectangle $R \in \mathcal R(\sigma)$ and does not lie on $\sigma$, then we define the associated rectangle $R_C$ as the rectangle in $R$ with corner $C$ and diagonal along $\sigma$.  In Figure~\ref{fig:enlarging-fitted-rectangles} we show a representative configuration.  

Let $R_C^-$ and $R_C^+$ be the rectangles of $\mathcal R(\sigma)$ obtained by sliding one of the points of $R_C \cap \sigma$ along $\sigma$.  The rectangle $R_C$ is contained in both of these.  Also it is possible that one or both is equal to $R_C$.  

The set $\mathcal R(\sigma)$ is finite, because the diagonals of the elements of $\mathcal R(\sigma)$ form a pairwise non-nested cover of $\sigma$ by closed intervals.  Also, each element of $\mathcal R(\sigma)$ contains at most one singularity on each edge, since we have assumed that $X$ has no horizontal or vertical saddle connections.  In particular, there are finitely many $R_C$ as above.  

\begin{lemma}
\label{lemma:minskytaylor}
Let $X$ be a singular Euclidean surface with finite flexibility.  For any saddle connection $\sigma$, there are rectangle-spanning saddle connections $\sigma^+,\sigma^-$ in the rectangle hull $\hull(\sigma)$ such that
\[
  \slope(\sigma^-) \leq \slope(\sigma) \leq \slope(\sigma^+).
\]
\end{lemma}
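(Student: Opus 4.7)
The plan is to locate both $\sigma^+$ and $\sigma^-$ among saddle connections joining singularities that lie on the boundary of some rectangle in the casing $\mathcal{R}(\sigma)$. The central observation is that any such saddle connection automatically lies in $\hull(\sigma)$ and is automatically rectangle-spanning: its own spanning rectangle sits inside a rectangle of $\mathcal{R}(\sigma)$, and the latter has no interior singularities by definition. If $\sigma$ is itself rectangle-spanning, I set $\sigma^+ = \sigma^- = \sigma$ and am done. Otherwise, I write $\mathcal{R}(\sigma) = \{R_1, \ldots, R_n\}$ with $n \geq 2$ ordered along $\sigma$ from the endpoint $P_1$ to the endpoint $P_2$, and let $C_i$ denote the shared singularity between $R_i$ and $R_{i+1}$. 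Because $\sigma$ has no interior singularities, each $C_i$ lies strictly above or strictly below $\sigma$, and the blocking geometry of maximality forces $C_i$, when above $\sigma$, to sit on the top edge of $R_i$ and the left edge of $R_{i+1}$; when below, on the right edge of $R_i$ and the bottom edge of $R_{i+1}$.

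Next I produce three families of rectangle-spanning saddle connections with controlled slope relative to $\sigma$. First, $P_1 C_1 \subseteq R_1$ has slope strictly greater than $\slope(\sigma)$ when $C_1$ is above $\sigma$, and strictly less when below, by direct comparison of horizontal and vertical displacements to those of $\sigma$. Second, the symmetric saddle connection $C_{n-1} P_2 \subseteq R_n$ has slope less than $\slope(\sigma)$ when $C_{n-1}$ is above $\sigma$, and greater when below. Third, whenever $C_j$ and $C_{j+1}$ lie on opposite sides of $\sigma$, I form the crossing saddle connection $C_j C_{j+1} \subseteq R_{j+1}$: if $C_j$ is above and $C_{j+1}$ below, then its endpoints sit on the left and right edges of $R_{j+1}$, so its horizontal span equals the width of $R_{j+1}$ while its vertical span is at most the height, forcing $\slope(C_j C_{j+1}) \leq \slope(\sigma)$; symmetrically, if $C_j$ is below and $C_{j+1}$ above, the endpoints sit on the bottom and top edges, forcing $\slope(C_j C_{j+1}) \geq \slope(\sigma)$.

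I combine the cases according to the sides of $C_1$ and $C_{n-1}$. If $C_1$ and $C_{n-1}$ lie on the same side of $\sigma$, then $P_1 C_1$ and $C_{n-1} P_2$ already give one saddle connection with slope greater than $\slope(\sigma)$ and one with slope less, supplying both $\sigma^+$ and $\sigma^-$. If they lie on opposite sides, then $P_1 C_1$ and $C_{n-1} P_2$ fall on the same side of $\slope(\sigma)$; but then the sequence $C_1, \ldots, C_{n-1}$ must switch sides at some consecutive pair $(C_j, C_{j+1})$, and the crossing construction supplies the missing representative. The step I expect to be the main obstacle is verifying carefully that the proposed segments really are saddle connections; this amounts to showing that their interiors miss all singularities (immediate because the segments lie in the interior of a rectangle of $\mathcal{R}(\sigma)$) and that they are neither horizontal nor vertical, where I invoke finite flexibility of $X$ to exclude horizontal and vertical saddle connections.
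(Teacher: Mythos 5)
Your proof is correct, but it takes a genuinely different route from the paper's. The paper's argument is extremal and local: among the finitely many sub-rectangles $R_C$ having a corner at a singularity $C$ and a diagonal along $\sigma$, it selects one of largest area, slides it in both directions along $\sigma$ to rectangles $R_C^{\pm}\in\mathcal R(\sigma)$, finds blocking singularities $C^{\pm}$ on their far edges, and uses the area-maximality of $R_C$ to show that $C^{\pm}$ lie on the $\sigma$-side of the line through $C$ parallel to $\sigma$; the two saddle connections $C^{+}C$ and $C^{-}C$, which share the endpoint $C$, then serve as $\sigma^{+}$ and $\sigma^{-}$. You instead perform a global sweep along the casing: ordering $\mathcal R(\sigma)=\{R_1,\dots,R_n\}$, tracking which side of $\sigma$ each shared singularity $C_i$ lies on, and running a discrete intermediate-value argument in which the end segments $P_1C_1$ and $C_{n-1}P_2$ and the crossing segments $C_jC_{j+1}$ furnish rectangle-spanning saddle connections of $\hull(\sigma)$ on both sides of $\slope(\sigma)$. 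Your edge-location claim (that $C_i$, when above $\sigma$, lies on the top edge of $R_i$ and the left edge of $R_{i+1}$, with the mirror statement below) does follow from the non-nesting of the diagonals together with the fact, recorded in the paper's preliminaries, that consecutive casing rectangles share a singularity; and each of your slope comparisons is a correct height-versus-width comparison inside a single casing rectangle. Your route avoids the area-extremality argument at the price of a longer case analysis, and it produces $\sigma^{\pm}$ that need not share an endpoint (which the lemma does not require). The one degenerate possibility---a horizontal or vertical candidate segment---is, as you note, excluded by finite flexibility.
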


\begin{proof}

\begin{figure}[ht]
      \centering
      \def\rad{0.015}
      \begin{tikzpicture}[scale=4]
        \draw[very thick,red] (0.5,0.25+0.3) -- (0.5,0.25) -- (1.6,0.25);
        \draw[very thick,blue] (2,0.5) -- (2,1) -- (1.4,1);
        \filldraw[green!30!white] (1,0.5) rectangle ++(0.6,0.3);
        \filldraw (0,0) circle (\rad) -- (2,1) circle (\rad) node[right] {$C^-$};
        \draw (1,0.5) node[below] {} -- ++(0,0.3) node[above left] {$C$} -- ++(0.6,0) node[above] {} -- ++(0,-0.3) node[below right] {} -- ++(-0.6,0);
        \draw (1.575,0.525)  node[above left] {$R_C$} ;
        \draw[blue] (2.25,0.5)  node[above left] {$R_C^-$} ;
        \draw[red] (0.45,0.65)  node[above left] {$R_C^+$} ;
        \draw (0.30,0.125)  node[above left] {$\sigma$} ;

        \filldraw (1,0.8) circle (\rad);
        \draw[red] (1,0.8) -- (0.5,0.8) -- (0.5,0.25) -- (1.6,0.25) -- (1.6,0.5);
        \filldraw (0.5,0.41) circle (\rad) node[left] {$C^+$} ;
        \draw[blue] (1,0.8) -- (1,1) -- (2,1) -- (2,0.5) -- (1.6,0.5);
        \draw[dashed] (0.5,0.55) -- (1,0.8);
        \draw[dashed] (1,0.8) -- (1.4,1);
      \end{tikzpicture}
      \caption{The rectangles $R_C$, $R_C^+$, and $R_C^-$ from the proof of \Cref{lemma:minskytaylor}}
      \label{fig:enlarging-fitted-rectangles}
    \end{figure}
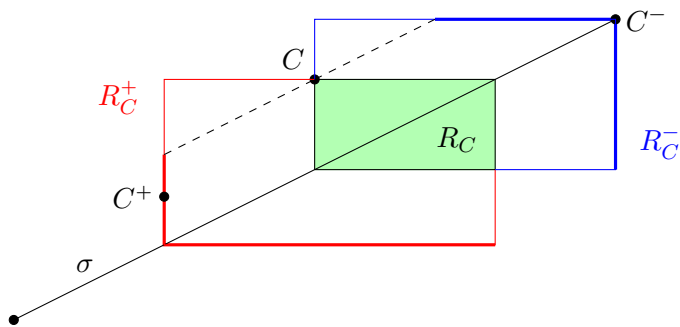

As above, there are finitely many rectangles $R_C$ associated to singularities in the rectangles of $\mathcal R(\sigma)$.  Thus there is such an $R_C$ of largest area.   

Without loss of generality, we assume that $\sigma$ passes through the bottom-left and top-right corners of $R_C$ and that $C$ lies above (or at an endpoint of) $\sigma$ (if needed, we precompose $R$ with a symmetry of its domain rectangle $E$).  Let $R_C^+$ and $R_C^-$ denote the elements of $\mathcal{R}(\sigma)$ that contain $R_C$ and are obtained by sliding, respectively, the bottom-left and top-right corners of $R_C$ along $\sigma$.  As above, it is possible for one or both of these two to equal $R_C$.  See Figure~\ref{fig:enlarging-fitted-rectangles} for an illustration.  

There must be a singularity $C^+$ on the left or bottom boundary of $R_C^+$ and also a singularity $C^-$ on the top or right boundary of $R_C^-$, for otherwise, $R_C^+$ and $R_C^-$ would not be maximal (as demanded by the definition of the rectangle hull).  

Let $\ell$ be the line segment in $R_C^+ \cup R_C^-$ that passes through $C$ and is parallel to $\sigma$.  The singularities $C^+$ and $C^-$ lie on the same side of $\ell$ as $\sigma$.  Indeed, if not, there would be a translate of $R_C^{\pm}$ (in the direction of $\sigma$) that strictly contains $R_C$, violating the maximality of the latter.  

Let $\sigma^+$ and $\sigma^-$ be the saddle connections connecting $C^+$ and $C^-$ to $C$ within $R_C^+$ and $R_C^-$.  We have by definition that $\sigma^+$ and $\sigma^-$ lie in $\hull(\sigma)$.  By the previous paragraph they satisfy $\slope(\sigma^-) \leq \slope(\sigma) \leq \slope(\sigma^+)$, as desired.
\end{proof}

We now turn towards the proof of Lemma~\ref{lemma:global-disparity}.  In the proof, we use the fact that $K(S)$, our upper bound for the number of separatrices in a foliation on $S$, is also an upper bound for the number of triangles in a triangulation of $S$ by saddle connections.  Indeed, the map associating each separatrix to the triangle it initially enters is bijective.

\begin{proof}[Proof of Lemma~\ref{lemma:global-disparity}]

We proceed in two steps, first proving the lemma in the case of rectangle-spanning saddle connections and then in the general case.

To this end, first suppose that $\sigma_0$ and $\sigma_1$ are non-crossing rectangle-spanning saddle connections in $X$.  Minsky and the third-named author proved that any collection of non-crossing, rectangle-spanning saddle connections can be extended to a triangulation by rectangle-spanning saddle connections \cite[Lemma 3.2]{MinskyTaylor}.  Consider then such triangulation of $X$. As above, the number of triangles is bounded above by $K=K(S)$.  Thus, any sequence of distinct triangles $(T_i)$ has length at most $K$.  It further follows that there is a sequence of edges of the triangulation $e_0 = \sigma_0,e_2,\dots,e_n=\sigma_1$ with $n \leq K$ and with each consecutive pair contained in a triangle.  Since each triangle is contained in a rectangle, the special case now follows from \Cref{lemma:local-disparity}. 

We now proceed to the general case. Let $\sigma_0$ and $\sigma_1$ be arbitrary non-crossing saddle connections. For $i=0,1$ let $\sigma_i^{\pm}$ be the rectangle-spanning saddle connections given by Lemma~\ref{lemma:minskytaylor}.  Minsky and Taylor proved that if two saddle connections are non-crossing then the elements of their rectangle hulls are pairwise non-crossing \cite[Lemma 4.1]{MinskyTaylor}.  Therefore, by Lemma~\ref{lemma:minskytaylor} and the special case of this lemma, we have
\[
\frac{\slope(\sigma_0)}{\slope(\sigma_1)} \leq \frac{\slope(\sigma_0^+)}{\slope(\sigma_1^-)} \leq F^{2K},
\]
as desired.  
\end{proof}


\subsection{Slopes and stretch factors}
\label{sec:slope_stretch}

In this section we incorporate the slope disparity lemmas together with basic properties of pseudo-Anosov maps in order to prove Proposition~\ref{prop:boundedslope}.  We begin with a comparison of rectangle flexibilities and stretch factors.  

\p{Flexibility versus stretch factor}  The following lemma bounds the rectangle flexibility of a singular Euclidean surface corresponding to a pseudo-Anosov map in terms of the stretch factor. Again, $K=K(S_{g,p})=4(3g-3+p)$.

\begin{lemma}
\label{lemma:flexibility}
  Let $S=S_{g,p}$ and let $f \colon S \to S$ be pseudo-Anosov with stretch factor $\lambda$ and singular Euclidean structure $X$.  Then
  \begin{displaymath}
    \flex(X) \le \lambda^{K}.
  \end{displaymath}
\end{lemma}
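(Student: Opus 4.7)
Fix a rectangle-spanning saddle connection $\sigma$ with spanning rectangle $R_\sigma$ of width $w$ and height $h$, and let $\hat R$ be an extension of $R_\sigma$. By the four-fold symmetry between horizontal versus vertical and left versus right extensions, it suffices to handle the case where $\hat R$ is a maximal horizontal extension to the right, so $\hat R$ has width $\hat w > w$ and height $h$. The goal is to show $\alpha := \hat w/w \leq \lambda^K$, since $\slope(R_\sigma)/\slope(\hat R) = \alpha$.

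The first step is to produce a rectangle-spanning companion of $\sigma$ that is non-crossing with $\sigma$. Maximality yields a singularity $P$ on the right edge of $\hat R$ at some height $y^\ast$, and the assumption that $X$ has no horizontal saddle connections forces $y^\ast \in (0,h)$. The straight segment $\tau$ from the endpoint $A=(0,0)$ of $\sigma$ to $P=(\hat w, y^\ast)$ has both endpoints in $\Sigma$, meets no other singularity (none lies in the interior of $\hat R$), and its spanning rectangle sits inside $\hat R$, so $\tau$ is a rectangle-spanning saddle connection. Since $\sigma$ and $\tau$ share only the singularity $A$ and leave it in different directions inside $\hat R$, they are non-crossing. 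A direct computation gives
\[
\frac{\slope(\sigma)}{\slope(\tau)} = \frac{h/w}{y^\ast/\hat w} = \frac{h\,\hat w}{w\, y^\ast} \geq \frac{\hat w}{w} = \alpha,
\]
where the inequality uses $y^\ast \leq h$. It therefore suffices to bound the slope ratio $\slope(\sigma)/\slope(\tau)$ by $\lambda^K$.

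The idea for the main step is to interpolate between $\sigma$ and $\tau$ by a chain of pairwise non-crossing rectangle-spanning saddle connections in which consecutive members share a triangle in a triangulation of $\bar X$ by rectangle-spanning saddle connections. By the Minsky--Taylor extension theorem (the same tool used in the proof of \Cref{lemma:global-disparity}), the non-crossing pair $\sigma,\tau$ lies in such a triangulation, and since there are at most $K$ triangles (the map from separatrices to triangles they initially enter is bijective), the chain has length at most $K$. For the dynamical input, the excerpt points out that triangulations of $X$ by rectangle-spanning saddle connections are precisely the time-slices of Agol's veering triangulation of the mapping torus of $f$. This identification makes each elementary flip along the chain equivariant with respect to $f$, so that the slope change across each flip is controlled by a bounded power of $\lambda$. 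Composing the bounds along a chain of length at most $K$ yields $\slope(\sigma)/\slope(\tau) \leq \lambda^K$, hence $\alpha \leq \lambda^K$.

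The main obstacle is getting a per-flip slope bound depending only on $\lambda$, and not on $\flex(X)$ itself. A naive application of \Cref{lemma:local-disparity} would give the per-flip bound $F^2 = (2\flex(X)-1)^2$, which produces only the circular inequality $\flex(X) \leq (2\flex(X)-1)^{2K}$. Breaking this circularity is the whole point: one must use the pseudo-Anosov structure in an essential way, via the $f$-equivariance of Agol's veering triangulation, to convert the topological bound of $K$ on the number of triangles into the geometric bound $\lambda^K$ on the extension factor. Once this per-flip dynamical bound is in hand, the rest of the argument is straightforward chain arithmetic.
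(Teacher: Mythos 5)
Your reduction to a maximal rightward extension and the construction of the companion saddle connection $\tau$ from the corner $A$ to the singularity $P$ on the far edge are both fine, and the inequality $\alpha \leq \slope(\sigma)/\slope(\tau)$ is correct. But the heart of your argument --- the ``per-flip slope bound controlled by a bounded power of $\lambda$'' along a chain of veering triangles --- is asserted, not proved, and you yourself identify it as the main obstacle. Saying that the identification with Agol's veering triangulation ``makes each elementary flip equivariant with respect to $f$'' does not produce such a bound: $f$ does not preserve an individual slice of the veering triangulation (it shifts the Agol sequence by $k$ steps, where $k$ is not controlled by $K$), and the only available bound on the slope ratio of two saddle connections sharing a triangle is Lemma~\ref{lemma:local-disparity}, whose constant $F^2=(2\flex(X)-1)^2$ is exactly the circularity you are trying to escape. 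A posteriori the per-triangle ratio \emph{is} bounded by a power of $\lambda$, but only because of the lemma you are trying to prove. So the proposal has a genuine gap precisely at its load-bearing step.

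The paper's proof avoids chains and triangulations entirely and uses the dynamics of $f$ once, directly on the rectangle. With $A$ the corner singularity of the extension $R=\hat R_\sigma$ and $B$ the other endpoint of $\sigma$, choose $k\leq K$ so that $f^k$ fixes $A$ together with the horizontal and vertical separatrices at $A$ (possible because the total number of separatrices is at most $K$). Then $f^k(\sigma)$ still emanates from the corner $A$ into $R$, with $\height(f^k(\sigma))=\lambda^{-k}\height(\sigma)=\lambda^{-k}\height(R)$; since $R$ has no interior singularities, $f^k(B)$ cannot lie in the interior of $R$, which forces $\width(f^k(\sigma))\geq\width(R)$. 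Combining these gives $\slope(R)\geq\lambda^{k}\slope(f^k(\sigma))$, and since $\slope(\sigma)=\lambda^{2k}\slope(f^k(\sigma))$ one concludes $\slope(\sigma)/\slope(R)\leq\lambda^{k}\leq\lambda^{K}$ (the reverse ratio is $\leq 1$ trivially, and the vertical case follows by replacing $f$ with $f^{-1}$). If you want to salvage your outline, this is the dynamical input you would need to substitute for the unproved per-flip bound; at that point the chain of triangles becomes unnecessary.
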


  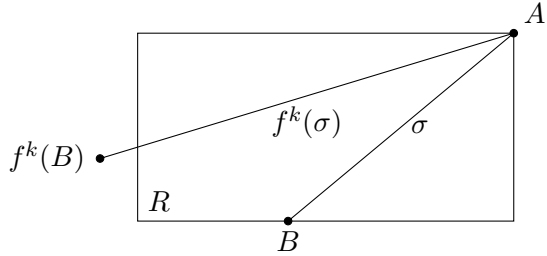
\begin{figure}[ht]
    \centering
    \begin{tikzpicture}[scale = 2.5]
      \draw (0,0) node[above right] {$R$} -- (2,0) -- (2,1) node[above right] {$A$} --  (0,1) -- cycle;
      \filldraw (2,1) circle (0.02) -- node[right] {$\sigma$} (0.8,0) circle (0.02) node[below] {$B$};
       \filldraw (2,1) -- node[below] {$f^k(\sigma)$} (-0.2,0.333) circle (0.02) ; 
               \draw (-0.225,0.20)  node[above left] {$f^k(B)$} ;

    \end{tikzpicture}
    \caption{The saddle connections $\sigma$ and $f^k(\sigma)$ from the proof of Lemma~\ref{lemma:flexibility}}
    \label{fig:stretch-from-corner}
  \end{figure}

\begin{proof}

Let $\hat R_\sigma$ be an extension of a spanning rectangle for a saddle connection $\sigma$ in $X$.  And let us denote $\hat R_\sigma$ by $R$.  By symmetry, we may assume that $R$ is a horizontal extension, as in \Cref{fig:stretch-from-corner} (for the other case we replace $f$ with $f^{-1}$).  Say that the endpoints of $\sigma$ are the singularities $A$ and $B$.  It must be the case that one of these, say $A$, is at a corner of $R$.  The singularity $B$ must then lie on a horizontal edge of $R$.  We need to show two inequalities:
\[
  \frac{\slope(R)}{\slope(\sigma)}  \le \lambda^{K}  \quad \text{and} \quad \frac{\slope(\sigma)}{\slope(R)} \leq \lambda^K.
 \]
The first inequality is immediate since $\slope(R) \le \slope(\sigma)$ and $\lambda > 1$.  We proceed to the second inequality. Choose $k \leq K$ so that $f^k$ fixes $A$ as well as the separatrices of the horizontal and vertical foliations at $A$. Since $R$ contains no singularities, $f^k(B)$ cannot lie in the interior of $R$. Thus
\[
\width(R) \le \width(f^k(\sigma)).
\]
On the other hand, we have
\[
\height(R) =  \height(\sigma) = \lambda^{k}\height(f^k(\sigma)).
\]
Dividing the first and last expressions here by the expressions in the previous inequality, we obtain
\[
 \slope(R)\ge   \lambda^{k} \slope(f^k(\sigma)).
\]
Since $\slope(\sigma) = \lambda^{2k} \slope(f^k(\sigma))$, we have
\[
\frac{\slope(\sigma)}{\slope(R)} \le \frac{\lambda^{2k}\slope(f^k(\sigma))}{\lambda^k\slope(f^k(\sigma))} = \lambda^k \le \lambda^{K},
\]
as desired.
\end{proof}

\p{Penner's lower bound} Penner proved the following lemma \cite{Penner91}. 

\begin{lemma}[Penner]\label{lem:penner} If $f \in \Mod(S)$ then
\[
2 \le \lambda^K.
\]
where $\lambda$ is the stretch factor of $f$ and $K=K(S)$.
\end{lemma}

Penner stated the equivalent formulation $\log \lambda(f) \geq (\log 2)/K$.  In the proof of Proposition~\ref{prop:boundedslope} we will use the inequality $4^K \leq \lambda^{2K^2}$, obtained from Penner's inequality by squaring both sides and raising to the $K$th power. 

\p{Completing the proof} We are now ready for the proof of Proposition~\ref{prop:boundedslope}, which gives an upper bound on the diameter of a set of non-crossing saddle connections.

\begin{proof}[Proof of Proposition~\ref{prop:boundedslope}]

As in the statement of the proposition, $S$ is a surface, $f \colon S \to S$ is pseudo-Anosov with stretch factor $\lambda$, $X$ is an associated singular Euclidean surface, and $Z$ is a collection of non-crossing saddle connections.  Also recall from the introduction that $\Qcurve = \Qcurve(S) = 2K^2$.  Let $F=F(X)$ be the flexibility.  The conclusion of the proposition is equivalent to the inequality
\[
\frac{\max \slope(Z)}{\min \slope(Z)} \leq (\lambda^2)^{\Qcurve}.
\] 
We proceed to the proof of this inequality.  Let $\sigma_0$ and $\sigma_1$ be any two saddle connections of $Z$.   We have
\begin{align*}
\slope(\sigma_0)
    &\le \slope(\sigma_1)F^{2K}
        &&\text{(Lemma~\ref{lemma:global-disparity})}\\
    &=  \slope(\sigma_1)(2\flex(X)-1)^{2K}
        &&\text{(definition of $F$)}\\
    &\le \slope(\sigma_1)(2\lambda^K-1)^{2K}
        &&\text{(Lemma~\ref{lemma:flexibility})}\\
    &< \slope(\sigma_1)4^K\lambda^{2K^2}
        &&\text{(}2\lambda^K-1 < 2\lambda^K\text{)}\\
    &\le \slope(\sigma_1)(\lambda^2)^{2K^2}
        &&\text{(Lemma~\ref{lem:penner})}
\end{align*}
as required.
\end{proof}

\subsection{Application to stable translation lengths in the curve graph}\label{sec:GT} As above, Proposition~\ref{prop:boundedslope} gives an upper bound of $\Qcurve$ to the diameter of the horizontality of a collection of pairwise non-crossing saddle connections.  This proposition has the following corollary.   In the statement, $\C(S)$ is the curve graph for $S$ and $\tau_{C(S)}(f)$ is the stable translation length for the action of a pseudo-Anosov $f$ on $\C(S)$.  The second statement of the corollary is a (slight) quantitative strengthening of a special case of a result of Gadre--Tsai \cite[Theorem 1.1]{GadreTsai}.

\begin{corollary}\label{cor:Lip}
Let $S = S_{g,p}$ and let $f : S \to S$ be pseudo-Anosov.  The horizontality function defines a $\Qcurve$--coarse Lipschitz map
\[
\H_f \colon \C(S) \to \R.
\]
Hence, for any pseudo-Anosov $f \in \Mod(S)$, we have
\[
\tau_{\C(S)}(f) \geq \frac{1}{\Qcurve}.
\]
\end{corollary}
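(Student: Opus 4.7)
The plan is to combine Proposition~\ref{prop:boundedslope} with the Minsky--Taylor fact cited just after that proposition: two disjoint simple closed curves $c,d$ in $S$ have flat geodesic representatives $c_X$ and $d_X$ whose underlying saddle connections, taken together, are pairwise non-crossing.

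For the first assertion, I would fix adjacent $c,d\in\C(S)$, let $Z_c$ and $Z_d$ be the sets of saddle connections appearing in $c_X$ and $d_X$, and set $Z=Z_c\cup Z_d$. By the Minsky--Taylor fact, $Z$ is pairwise non-crossing, so Proposition~\ref{prop:boundedslope} yields $\diam\H_f(Z)\le\Qcurve$. Since $\H_f(c)=\H_f(Z_c)$ and $\H_f(d)=\H_f(Z_d)$, this gives $\diam(\H_f(c)\cup\H_f(d))\le\Qcurve$, which is the coarse Lipschitz statement. (Note that Proposition~\ref{prop:boundedslope} applied to $Z_c$ alone also shows that each single value $\H_f(c)$ is itself a set of diameter at most $\Qcurve$, so the set-valued map is only $\Qcurve$-fuzzy at every vertex.)

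For the second assertion, I would use the equivariance $\H_f(f^n(c))=\H_f(c)+n$ (an equality of sets). Fix $c\in\C(S)$ and $n\ge 1$, set $k=d_{\C(S)}(c,f^n(c))$, and take a geodesic $c=c_0,c_1,\dots,c_k=f^n(c)$. Choose any $h_i\in\H_f(c_i)$. By the first assertion applied to each consecutive pair, $|h_i-h_{i-1}|\le\Qcurve$, hence $|h_k-h_0|\le k\Qcurve$. On the other hand, $h_k\in\H_f(c)+n$ means $h_k=h_0'+n$ for some $h_0'\in\H_f(c)$, and the $\Qcurve$-diameter bound on $\H_f(c)$ gives $|h_0'-h_0|\le\Qcurve$. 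Combining these,
\[
n-\Qcurve \;\le\; h_k-h_0 \;\le\; k\,\Qcurve,
\]
so $d_{\C(S)}(c,f^n(c))/n\ge 1/\Qcurve - 1/n$; letting $n\to\infty$ finishes the proof.

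The one delicate point—and the main obstacle to turning this outline into a clean argument—is that $\H_f$ is genuinely set-valued rather than a function to $\R$, so the telescoping chain must be made along specific representatives at each vertex and the Lipschitz constant has to absorb the fact that the source and target are both fuzzy by up to $\Qcurve$. Once the uniform diameter bound on each $\H_f(c)$ coming from Proposition~\ref{prop:boundedslope} is in hand, this fuzziness contributes only an additive $O(1)$ error, which disappears in the limit defining $\tau_{\C(S)}(f)$.
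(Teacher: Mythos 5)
Your argument is correct and is exactly the route the paper intends: the paper states \Cref{cor:Lip} as an immediate consequence of \Cref{prop:boundedslope} together with the Minsky--Taylor non-crossing lemma for disjoint curves and the equivariance $\H_f(f^n(c))=\H_f(c)+n$, and your telescoping computation supplies the standard details. The only point to flag is that for the sporadic surfaces $S_{1,1}$ and $S_{0,4}$ adjacency in $\C(S)$ means intersection number one or two rather than disjointness, so the non-crossing claim for $Z_c\cup Z_d$ needs the slightly stronger fact the paper invokes in the proof of \Cref{lemma:trackslope1}.
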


For closed surfaces, Gadre--Tsai proved that for $g \geq 2$ and any pseudo-Anosov $f \in \Mod(S_g)$ we have
\[
\tau_{\C(S_g)}(f) \geq  \frac{1}{162(2g-2)^2+30(2g-2)}.
\]
Since $\Qcurve = 2K^2 < 162(2g-2)^2+30(2g-2)$ for $g > 1$, our Corollary~\ref{cor:Lip} is a strengthening of this.  For surfaces with punctures, Gadre---Tsai give a lower bound that is stronger than the one given by our corollary.


\section{Approximating rectangles}
\label{sec:rec}

The goal of this section is to prove Lemma~\ref{lem:covering_rectangles}.  This is the most technical result in the paper.  Let $X$ be a singular Euclidean surface and let $\wh X$ be its completed universal cover, as in Section~\ref{section:slope}.  What Lemma~\ref{lem:covering_rectangles} tells us is that if $\gamma_1$ and $\gamma_2$ are geodesics in $\wt X$ that cross each other and also cross a singularity-free horizontal leaf $\ell$, then there is a maximal rectangle $R$ in $\wh X$ with the following properties:
\begin{itemize}
\item $\gamma_1$ and $\gamma_2$ cross the horizontal sides of $R$, and
\item any saddle connection in $X$ with sufficiently small slope has a path lift that crosses the vertical sides of $R$.
\end{itemize}
Before stating Lemma~\ref{lem:covering_rectangles}, we introduce the notion of a $k$-approximating rectangle.  

\p{Approximating rectangles} As above let $X$ be a singular Euclidean surface with $\flex(X) < \infty$ and let $\wh X$ be its  completed universal cover.  Let $\gamma_1$ and $\gamma_2$ be intersecting geodesics in $\wh{X}$ whose endpoints (if any) are singular points of $\wh{X}$ (all points of $\wh{X} \setminus \wt X$ are singular). 

Let $\ell$ be a bi-infinite horizontal leaf in $\wh{X}$ that intersects both $\gamma_1$ and $\gamma_2$. We say a rectangle $R$ in $\wh{X}$ is a \emph{$k$-approximating rectangle} for the triple $(\gamma_1,\gamma_2,\ell)$  if
\begin{itemize}
\item $R$ is maximal (i.e. it contains a singularity in each edge), 
\item $\ell$ contains an interior horizontal leaf of $R$,
\item $\ga_1$ and $\ga_2$ connect the horizontal edges of $R$, and
\item  $k \slope(R) \ge \min \left\{ \slope(\gamma_1) \cup  \slope(\gamma_2) \right \}$.  
\end{itemize}

As in Section~\ref{section:slope}, we write $F(X)$ for the quantity $2\flex(X)-1$ when $X$ is a singular Euclidean surface.  Also, in what follows, by a lift of a saddle connection will mean a path lift. 

\begin{lemma}
\label{lem:covering_rectangles}
Let $X$ be a singular Euclidean surface with $\flex(X) < \infty$.  Let $\gamma_1$ and $\gamma_2$ be intersecting geodesics in $\wh{X}$ whose endpoints (if any) are singular points of $\wh{X}$.  Let $\ell$ be a singularity-free horizontal leaf in $\wh{X}$ that intersects both $\gamma_1$ and $\gamma_2$.  Let $F=2\flex(X)-1$ and let $K=K(X)$ be the separatrix complexity.  Then the following hold.
\begin{enumerate}
\item\label{cr1} There exists an $F^5$-approximating rectangle $R$ for $(\gamma_1,\gamma_2,\ell)$.
\item\label{cr2} If $R$ is an $F^5$-approximating rectangle for $(\gamma_1,\gamma_2,\ell)$ and $Z$ is a collection of pairwise non-crossing saddle connections in $X$ with
\[
\min \slope(Z) < F^{-(2K+6)} \slope(\gamma_i) \text{ for } i \in \{1,2\}
\]
then there is a $\sigma \in Z$ and a lift $\tilde \sigma$ that crosses the vertical edges of $R$.  
\end{enumerate}
\end{lemma}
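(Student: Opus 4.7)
\emph{Part (1) plan.} I would construct $R$ by growing outward from the segment $I \subset \ell$ joining $p_1 = \gamma_1 \cap \ell$ and $p_2 = \gamma_2 \cap \ell$. Since $\ell$ is singularity-free and the $\gamma_i$ meet $\ell$ transversally at the non-singular points $p_i$, I would first thicken $I$ into a thin horizontal rectangle with vertical height small enough that $\gamma_1$ and $\gamma_2$ exit through its top and bottom edges. I would then alternately extend vertically and horizontally, each time halting the extension either when a singularity appears on the extending edge or just before further extension would force one of the $\gamma_i$ to exit through a side. The process terminates with a maximal rectangle $R$, since each halted extension in one direction produces a singularity on an adjacent edge. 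For the slope estimate, I would identify a rectangle-spanning saddle connection in the rectangle hull of a diagonal of $R$ whose slope matches $\slope(R)$ up to a factor of $F$ by \Cref{lem:slopeflex}, and then chain through a bounded number of further applications of the same lemma to compare with $\slope(\gamma_i)$. The constant $5$ corresponds to this chain length.

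\emph{Part (2) plan.} The hypothesis applied to both indices together with the approximating bound $\min\{\slope(\gamma_1),\slope(\gamma_2)\} \leq F^5\slope(R)$ from part~(1) gives
\[
\min\slope(Z) \;<\; F^{-(2K+6)} \cdot F^5 \slope(R) \;=\; F^{-(2K+1)}\slope(R).
\]
The global slope disparity lemma (\Cref{lemma:global-disparity}) then gives $\slope(\sigma) \leq F^{2K}\min\slope(Z) < F^{-1}\slope(R)$ for every $\sigma \in Z$. Consequently, a straight segment of slope $\slope(\sigma)$ requires horizontal length exceeding $\height(R)/\slope(\sigma) > F \cdot \width(R)$ to span the vertical extent of $R$, so any lift of such a $\sigma$ that is forced to traverse the full height of $R$ must in fact exit through the vertical edges. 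It remains to produce a lift of some $\sigma \in Z$ that actually crosses $R$ from one vertical edge to the other. For this, I would take a minimum-slope $\sigma_0 \in Z$ and analyze its rectangle hull: because the rectangles in $\mathcal{R}(\sigma_0)$ are nearly horizontal (slope below $F^{-1}\slope(R)$) and $\ell$ is an interior horizontal leaf of $R$, some rectangle $R' \in \mathcal{R}(\sigma_0)$ has a lift straddling the subsegment $\ell \cap R$. By \Cref{lemma:minskytaylor}, the diagonal of $R'$ is then a rectangle-spanning $\sigma^\ast \in \hull(\sigma_0)$, and the corresponding lift of $\sigma^\ast$ crosses both vertical edges of $R$ by the quantitative bound above.

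\emph{Main obstacle.} Part~(1) is a careful but largely routine geometric construction, with the slope-factor bookkeeping being the principal source of care. The main obstacle is the last step of part~(2): producing a lift of a hull element of a low-slope $\sigma_0 \in Z$ that genuinely straddles $\ell \cap R$ inside $R$. This requires a geometric placement argument pinning a rectangle of $\mathcal{R}(\sigma_0)$ to sit across the portion of $\ell$ in the interior of $R$, leveraging both that $\ell$ is singularity-free throughout the horizontal extent of $R$ and that maximal rectangles in $\mathcal{R}(\sigma_0)$ are pinned on all sides by singularities of $\bar X$. I expect this to be where the specific exponent $2K+6$ in the hypothesis enters decisively—it provides just enough slope slack after invoking \Cref{lemma:global-disparity} for a hull rectangle to both fit within $R$ vertically and extend across its full horizontal width, thereby forcing the desired crossing.
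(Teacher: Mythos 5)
There is a genuine gap in both parts, and in each case it sits exactly where the quantitative content of the lemma lives. For part~(1), your construction of $R$ is plausible in outline (the paper also builds the rectangle around the segment of $\ell$ between the two intersection points, though it does so more carefully, via rectangles $R_1,R_2$ in the casings $\mathcal R(\gamma_1),\mathcal R(\gamma_2)$ having singularities on both sides of $\ell$, then a minimal rectangle $R_3$ with singular top and bottom edges whose vertical edges land inside $R_1\cup R_2$, and finally the maximal horizontal extension --- note that your ``halt just before $\gamma_i$ exits through a side'' is in tension with the maximality required by the definition). But the $F^5$ bound is simply asserted: ``chain through a bounded number of applications of \Cref{lem:slopeflex}'' does not explain why a chain of length $5$ from $R$ to a saddle connection of $\gamma_1$ or $\gamma_2$ exists. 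That is the bulk of the paper's proof: one must exhibit an explicit length-$5$ path $R,\,y_1y_2,\,R_4,\,\ldots$ in the graph of rectangles and rectangle-spanning saddle connections, which requires introducing auxiliary singularities $y_1,\dots,y_4$ and rectangles $R_4,R_5$ and running a three-case analysis on where $y_2,y_3$ sit relative to $\gamma_1,\gamma_2$.

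For part~(2), your arithmetic reduction to $\min\slope(Z)<F^{-(2K+1)}\slope(R)$ matches the paper, but the step you flag as the ``main obstacle'' is indeed unresolved, and the route you propose would not work: there is no reason a priori that $\sigma_0$ or any rectangle of $\mathcal R(\sigma_0)$ comes anywhere near $R$, so no placement argument can ``pin'' a hull rectangle across $\ell\cap R$. The paper's mechanism is different and is the key missing idea: let $\sigma_-$ be the saddle connection in $R$ joining its vertical edges, take $\rho\in\hull(\sigma)$ with $\slope(\rho)\le\slope(\sigma)$ via \Cref{lemma:minskytaylor}, and observe that $\slope(\rho)\le F^{-2K}\slope(\sigma_-)$; the \emph{contrapositive} of the global disparity lemma (\Cref{lemma:global-disparity}) then forces $\rho$ to cross $\sigma_-$ --- two non-crossing saddle connections cannot have slopes differing by more than $F^{2K}$ --- and the crossing is transferred from $\rho$ to a lift of $\sigma$ itself using that the hull rectangle $R_{\wt\rho}$ has no interior singularities. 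In other words, the exponent $2K+6$ decomposes as $5+(2K+1)$ with the $2K$ consumed by the disparity lemma to manufacture the intersection, not to fit a rectangle inside $R$. You use the disparity lemma only in its forward direction (to bound slopes across $Z$), which does not produce the required crossing.
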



\subsection{The existence of approximating rectangles} For the proof of the first statement of \Cref{lem:covering_rectangles}---on the existence of approximating rectangles---we introduce three ingredients: a basic fact about triangles in singular Euclidean surfaces, developing maps, and a graph of rectangles and rectangle-spanning saddle connections.

\p{Triangles in singular Euclidean surfaces} In the proof of \Cref{lem:covering_rectangles}, we will use that fact that a triangle in a (simply connected) singular Euclidean surface---a region bounded by three geodesics---has no singularities in its interior.  This follows readily from the Gauss--Bonnet formula for singular Euclidean surfaces with geodesic boundary, namely:
\[
2\pi \chi(Y) = -\pi \left (\sum_{p_k} \mathrm{deg}(p_k) \right ) + \sum_{i}(\pi - \theta_i),
\]
where the $\theta_i$ are the interior angles on the boundary and each $\mathrm{deg}(p_k) \geq 1$ is the degree of the interior singularity $p_k$ (the degree is two less than the number of prongs); see \cite[Theorem 3.3]{Rafi}.

\p{Developing maps} Let $X$ be a singular Euclidean surface.  By a disk in $\wh X$ we mean an isometric embedding of a closed polygonal disk in the Euclidean plane.  Examples of disks are (the images of) rectangles.  It follows from the Gauss--Bonnet formula that a triangle (as in the previous paragraph) is a disk in this sense.

Suppose we have a collection of disks $D_0,D_1,\dots$ in $\wh X$.  Suppose that each $D_i$ with $i > 0$ has the property that its intersection with $D_0 \cup \cdots \cup D_{i-1}$ is connected and contains a nonempty open set.  In this case there is a developing map from $\cup D_i$ to the Euclidean plane that respects horizontal and vertical directions and measures.  This map is defined inductively.  First we make a choice of embedding of $D_0$ in the plane; this choice is unique up to translation, rotation by $\pi$, and reflection through a horizontal or vertical line.  Then each subsequent $D_i$ is mapped to the plane in a way that agrees with the intersection with $D_0 \cup \cdots \cup D_{i-1}$.  Given a choice of embedding of $D_0$, the rest of the developing map is determined.  

In the proof of Lemma~\ref{lem:covering_rectangles}, we will use the (almost) uniqueness of the developing map as a way of relating certain configurations of triangles and rectangles in a singular-Euclidean surface to a configuration of disks in the Euclidean plane.  Once we fix a developing map it makes sense to use words like ``above'' and ``to the left'' when describing features of the configuration of disks.  

\p{The graph of rectangles and rectangle-spanning saddle connections} Let $X$ be a singular Euclidean surface.  We define $\G_X$ to be the graph whose vertices are the rectangles in $X$ and the rectangle-spanning saddle connections in $X$.  We connect a rectangle-spanning saddle connection $\sigma$ to a rectangle $R$ if $\sigma$ is contained in $R$.  It follows immediately from \Cref{lem:slopeflex} that if $v_0,\dots,v_n$ is a path in $\G_X$ then
\[
F^n \slope_X(v_0) \geq \slope_X(v_n),
\]
In the proof of Lemma~\ref{lem:covering_rectangles} we will bound the slope of the approximating rectangle by constructing a path in the graph $\G_{\wh X}$ associated to $\wh X$.  We will take $v_0$ to be the approximating rectangle $R$ and 
$v_n$ to be a saddle connection of $\gamma_1$ or $\gamma_2$ (or a saddle connection with slope greater than some such saddle connection).

\begin{proof}[Proof of Lemma~\ref{lem:covering_rectangles}(\ref{cr1})]

We break the proof into two main parts.  In the first part we construct a rectangle $R$ satisfying the first three conditions of the definition of an approximating rectangle, and in the second part we show that $R$ satisfies the slope condition.  We begin with some setup.   Throughout, the reader may refer to Figure~\ref{fig:separating-rectangle}. 

\p{Setup.}  Denote the unique points of intersection $\ell \cap \gamma_1$ and $\ell \cap \gamma_2$ by $x_1$ and $x_2$ (these points are unique because two geodesics must intersect in a connected union of saddle connections and because $\ell$ contains no singularities).  The geodesics $\gamma_1$, $\gamma_2$, and $\ell$ bound a triangle $\Delta$.  As above the interior of $\Delta$ is free of singularities.  We allow for the case where $x_1=x_2$; in this case $\Delta$ is degenerate, namely, it is the single point $x_1$.  In what follows, we assume that we are in the generic case where $x_1 \neq x_2$, and so $\Delta$ has nonempty interior.  

As per Figure~\ref{fig:separating-rectangle} we may choose a developing map for $\Delta$ so that $\Delta$ lies above $\ell$.  We may also assume that $\gamma_1$ and $\gamma_2$ lie to the left and right of $\Delta$, respectively.  These two choices determine the developing map up to translation.  

Below, we will define rectangles $R_1,\dots,R_5$, and at each stage it will be true that the union satisfies the criterion for the existence of a developing map, which is at each stage uniquely defined as an extension of the developing map defined on $\Delta$.  The images of the $R_i$ and $\Delta$ under the developing map (at least in a generic case) are shown in Figure~\ref{fig:separating-rectangle}.

\newcommand{\BaseMiniPicture}[1]{%
  \path[use as bounding box] (0.4,-1.3) rectangle (2.5,1.0);
  
  \draw[thin] (1.8,-1.15) rectangle (2,0.85);    
  \draw[thin] (2,0.3) rectangle (1.1,-0.9);      
  \draw[thin] (1.1,-0.4) rectangle (2.15,-0.9);  
  \draw[thin] (1.8,-.9) rectangle (2,0.65);      
  \draw[thin] (1.6,0.4) rectangle (2.1,0.65);    
  \draw[thin] (1.7,-0.6) rectangle (2.3,-0.9);   

  \draw[thin] (.55,.2) -- (.65,-.15);
  \draw[thin] (.65,-.15) -- (1,0.1) -- (2.2,0.7);
  \draw[thin] (.65,-.15) -- (0.9,-0.2) -- (2.4,-0.95);

\draw[thin,dash pattern=on 1.5pt off 1.5pt] (1.9,-1.2) -- (1.9,1.2);
  #1
}
  
\begin{figure}[ht]
  \centering

  \begin{minipage}{0.45\linewidth}
    \centering
    \begin{tikzpicture}[scale=4,rotate=270]
      \def\rad{0.02}
      \filldraw[fill=red!30!white,opacity=0.5,dotted,line width=0pt] (2,0.3) rectangle (1.1,-0.9);
      \filldraw[fill=blue!20!white,opacity=0.3,dotted,line width=0pt] (1.1,-0.4) rectangle (2.15,-0.9);
      \filldraw[fill=green!30!white] (1.8,-1.15) rectangle (2,0.85);
      \draw[line width=.5pt,fill=green!30!white] (1.8,-.9) rectangle (2,0.65);
      \filldraw (1.9,0.55) circle (\rad/2);
      \draw[line width=.5pt] (2,0.3) rectangle (1.1,-0.9);
      \draw[line width=.5pt] (1.1,-0.4) rectangle (2.15,-0.9);
      \node[left] at (1.95,0.6) {$x_2$};
      \filldraw (1.9,-0.7) circle (\rad/2);
      \node[left] at (1.95,-0.57) {$x_1$};
      \draw(1.95,-1.15) circle (\rad); 
      \draw[dashed] (1.9,-1.2) -- ++(0,2.4) node[right] {$\ell$};
       \draw (1,0.1) ++ (0.6,0.3) rectangle +(0.5,0.25) +(0.5,0) circle (\rad) + (0.5,0.1) circle (\rad);
      \draw (0.9, -0.2) -- ++(0.8,-0.4) rectangle +(0.6,-0.3) +(0.6,-0.3)  circle (\rad) node[above left] {$y_1$} +(0.4,0);
      \draw[color=gray,line width=.5pt] (.45,.75) -- node[above] {$\gamma_1$} (.55,.2);
      \draw[color=gray,line width=.5pt] (.45, -0.4) -- node[above] {$\gamma_2$} (.55,.2) -- (.65,-.15);
      \draw (.65,-.15) circle (\rad) -- node[right] {$\gamma_2$} (1,0.1) circle (\rad) -- ++(1.2, 0.6);
      \draw (.65,-.15) -- node[left] {$\gamma_1$} (0.9, -0.2) -- ++(1.5, -0.75);
      \draw (1.85, .85) circle (\rad); 
      \draw (2,0.3) circle (\rad) node[below] {$y_2$};
      \draw (1.1, -0.4) circle (\rad) node[above] {$y_3$}; 
      \draw (2.15, -0.54) circle (\rad);
      \node[left] at (2.22,-0.425) {$y_4$};
      \draw (2.3, -0.6) circle (\rad);
    \end{tikzpicture}
  \end{minipage}%
  \hfill
\begin{minipage}{0.225\linewidth}
  \scriptsize   
\vspace{-1.15in}

\hspace*{.5in}
  \begin{tikzpicture}[scale=0.75,rotate=270]
    \BaseMiniPicture{%
      \draw[line width=1pt] (1.6,0.4) rectangle (2.1,0.65);
      \draw[line width=1pt] (1.7,-0.6) rectangle (2.3,-0.9);
      \node at (1.35,.75) {$R_2$};
      \node at (1.45,-1.25) {$R_1$};
    }
  \end{tikzpicture}
  
\hspace*{.5in}
  \begin{tikzpicture}[scale=0.75,rotate=270]
    \BaseMiniPicture{%
      \draw[line width=1pt] (2,0.3) rectangle (1.1,-0.9);
      \node at (1.15,-1.25) {$R_4$};
    }
  \end{tikzpicture}

\vspace*{-.125in}
\hspace*{.5in}
  \begin{tikzpicture}[scale=0.75,rotate=270,baseline=(current bounding box.north)]
    \BaseMiniPicture{%
  \draw[line width=1pt] (.65,-.15) -- (0.9,-0.2) -- (1.9,-0.7) -- (1.9,0.6) -- (1,0.1) -- (.65,-.15);
      \node at (1.15,.75) {$\Delta$};
    }
  \end{tikzpicture}

  \end{minipage}
\begin{minipage}{0.225\linewidth}
  \scriptsize   
\vspace{-1.145in}

\hspace*{.25in}
  \begin{tikzpicture}[scale=0.75,rotate=270]
    \BaseMiniPicture{%
      \draw[line width=1pt] (1.8,-.9) rectangle (2,0.65);
      \node at (2.25,-0.1) {$R_3$};
    }
  \end{tikzpicture}

\hspace*{.25in}
  \begin{tikzpicture}[scale=0.75,rotate=270]
    \BaseMiniPicture{%
      \draw[line width=1pt] (1.1,-0.4) rectangle (2.15,-0.9);
      \node at (1.2,-1.25) {$R_5$};
    }
  \end{tikzpicture}

\hspace*{.25in}
  \begin{tikzpicture}[scale=0.75,rotate=270]
    \BaseMiniPicture{%
      \draw[line width=1pt] (1.8,-1.15) rectangle (2,0.85);    
      \node at (1.5,.95) {$R$};
    }
  \end{tikzpicture}
\end{minipage}

  \caption{The polygons in the proof of Lemma~\ref{lem:covering_rectangles}. The right column shows a key highlighting the individual rectangles and the triangle}
  \label{fig:separating-rectangle}
\end{figure}

\p{Part 1: Constructing $R$} In order to construct $R$, we first construct three intermediate rectangles, $R_1$, $R_2$, and $R_3$.

For each $i \in \{1,2\}$, we define $R_i$ to be a rectangle in $\mathcal R(\gamma_i)$ satisfying the property that $R_i$ contains singularities on either side of $\ell$.  To explain why such rectangles exist, we fix some $i \in \{1,2\}$.  Since $\ell$ intersects $\gamma_i$, it crosses some saddle connection $\sigma_i$ of $\gamma_i$.  As $\ell$ contains no singular points, the two singularities of $\sigma_i$ lie on opposite sides of $\gamma_i$.  The rectangle hull $\hull(\sigma_i)$ is a connected graph in $\wh X$ containing the two singular points of $\sigma_i$.  Therefore, there is a saddle connection $\rho_i$ of $\hull(\sigma_i)$ that connects singular points on opposite sides of $\ell$.  The rectangle $R_i \in \mathcal R(\sigma_i)$ associated to $\rho_i$ satisfies the desired property.

We note that Figure~\ref{fig:separating-rectangle} does not accurately reflect the case where $R_1 \cap R_2 \neq \emptyset$.  But our arguments still apply in this case.

We now proceed to construct the rectangle $R_3$.  Let $\ell_0$ be the minimal segment of $\ell$ containing $R_1\cap \ell$ and $R_2 \cap \ell$. Then let $R_3$ be the rectangle that is minimal with respect to the following properties: 
\begin{itemize}
\item $\ell_0$ is a horizontal leaf of $R_3$ and
\item $R_3$ contains singularities on both its top and bottom edges.
\end{itemize}
The rectangle $R_3$ has a singularity on both of its horizontal edges (possibly at the corners), but not in the interiors of its vertical edges.  Because $R_1$ and $R_2$ have singularities on both sides of $\ell$, the endpoints of both vertical edges of $R_3$ are contained in $R_1 \cup R_2$, as in Figure~\ref{fig:separating-rectangle}.

Finally, let $R$ be the rectangle that is maximal with respect to the property that the horizontal edges of $R_3$ are contained in $R$.  In other words, $R$ is the maximal horizontal extension of $R_3$. 

To complete the first part of the proof, we check that $R$ satisfies the first three conditions in the definition of an approximating rectangle.
\begin{itemize}
\item Since $R_3$ contains singularities on both of its horizontal edges and $R$ is a maximal horizontal extension of $R_3$, we have that $R$ is maximal.
\item Since $R_3$ contains a segment of $\ell$ in its interior and $R$ contains $R_3$, we have that $R$ contains a segment of $\ell$ in its interior.
\item Since $R_1$ and $R_2$ both contain singularities on both sides of $\ell$, the vertical edges of $R_3$ are contained in the vertical edges of $R_1$ and $R_2$.  Then, since the diagonals of $R_1$ and $R_2$ are contained in $\gamma_1$ and $\gamma_2$, respectively, it follows that $\gamma_1$ and $\gamma_2$ cross the horizontal edges of $R_3$, hence $R$, as desired.
\end{itemize}
This completes the first part of the proof.

\p{Part 2: Estimating the slope} Our strategy for estimating the slope of $R$, and hence verifying the fourth condition of a $k$-approximating rectangle, is to find a path in $\G_{\wh X}$ from $R$ to a saddle connection that has slope greater than or equal to the slope of some saddle connection of $\gamma_1$ or $\gamma_2$.  Since the goal is to show that $R$ is an $F^5$-approximating rectangle, it suffices (as above) to find such a path whose length is bounded above by 5.  To this end, we start by defining singular points $y_1$, $y_2$, $y_3$, and $y_4$ and rectangles $R_4$ and $R_5$; each of the saddle connections we use in our path will be of the form $y_iy_j$.

\begin{itemize}
\item Let $y_1$ and $y_2$ be the singularities on the horizontal edges of $R_3$ that lie above and below $\ell$, respectively. 
\end{itemize}
These exist by the definition of $R_3$, and they are unique since $\wh{X}$ has no horizontal saddle connections.  

Since $\Delta$, $R_1$, and $R_2$ have no singularities in their interiors, it follows that $y_1$ lies in the boundary of $R_1$ or $R_2$.  Up to relabeling the $\gamma_i$, we may assume that it lies in the boundary of $R_1$, as in Figure~\ref{fig:separating-rectangle}.  (Assuming $R_1 \neq R_2$ we have no further flexibility in the choice of developing map.)  Because $\Delta$ has no singularities in its interior, $y_1$ must lie on the left of $R_1$ (as in Figure~\ref{fig:separating-rectangle}) or the top of $R_1$.  

\begin{itemize}
\item Let $R_4$ be the rectangle that is maximal with respect to the properties that it contains $y_2$ at its bottom-right corner and $y_1$ along its left edge.
\item Let $y_3$ be the singularity on the top edge of $R_4$.
\item Let $R_5$ be the rectangle that is maximal with respect to the properties that it contains $y_3$ at its top-right corner and also contains $y_1$ along its left edge.
\item Let $y_4$ be the singularity on the bottom edge of $R_5$.  
\end{itemize}
In other words, $R_4$ is obtained from the rectangle spanned by the saddle connection $y_1y_2$ and extending upward as far as possible and $R_5$ is obtained from the rectangle spanned by $y_1y_3$ by extending it downward as far as possible.  

To complete the proof we treat three cases.  In each case we will choose a particular saddle connection $\sigma$ of one of the $\gamma_i$.  We will then find a saddle connection $y_iy_j$ or a rectangle $R_i$ with slope greater or equal to that of $\sigma$ and with distance at most 5 from $R$ in $\G_{\wh X}$.  

\p{Case 1: $y_3$ lies to the left of $\gamma_1$}  For this case let $\sigma$ be the saddle connection of $\gamma_1$ that crosses $\ell$.  There are two slightly different subcases, according to whether $y_1$ lies on the left-hand edge or the top edge of $R_1$.  We begin by treating the former case in detail and then explaining the minor changes needed for the latter case.  

In both subcases, it follows from the definition of $R_4$ that $y_1$ and $y_3$ lie on the left-hand and top edges of $R_4$, respectively.  It then follows that $y_1$ lies on the left-hand edge of $R_5$, and that $y_3$ is the top-right corner of $R_5$.  It must be that $y_3$, and hence the top edge of $R_5$, lies outside $\Delta$.  In particular, it must be that $\sigma$ intersects the right-hand edge of $R_5$.  It also follows from the definitions that $y_4$ lies on the bottom edge of $R_5$.  

For the case where $y_1$ lies on the left-hand edge of $R_1$, we claim that the following hold:
\begin{itemize}
\item $\sigma$ intersects the right-hand edge of $R_5$
\item $y_3$ is the top-right corner of $R_5$, 
\item $y_4$ lies on the bottom edge of $R_5$, 
\item $\sigma$ intersects the bottom edge of $R_5$,
\item along the bottom edge of $R_5$, $y_4$ lies to the right of $\sigma$.
\end{itemize}
We already verified the first three items.  Since $R_4$ contains no singularities in its interior and since $R_1$ contains a singularity below $\ell$, it must be that the bottom of $R_5$ passes through $R_1$, which gives the fourth item.  As $R_1$ contains no singularities in the interior, and has no singularities on its left edge besides $y_1$, the fifth and final item follows.

It follows from the claim that 
\[
\slope(y_3y_4) \geq \min \slope(\gamma_1).
\]
The path 
\[
R, y_1y_2, R_4, y_1y_3,R_5,y_3y_4
\]
in $\G_{\wh X}$ has length 5, and so the lemma follows in the case where $y_1$ lies on the left-hand edge of $R_1$.  

For the case where $y_1$ lies in the (interior of the) top edge of $R_1$, the proof is very similar.  The changes are that the left-hand edge of $R_5$ does not overlap the left-hand edge of $R_4$, and that the bottom edge of $R_5$ can be lower than the bottom edge of $R_1$ (this can happen when the singularity of $R_1$ below $\ell$ lies to the left of $R_5$).  The fourth and fifth items above become:
\begin{itemize}
\item $\sigma$ intersects the bottom edge or the left-hand edge of $R_5$,
\item if $\sigma$ intersects the bottom edge of $R_5$, $y_4$ lies to the right of $\sigma$ along this edge.
\end{itemize}
We can think of the case where $\sigma$ crosses the left-hand edge of $R_5$ as being a case where $\sigma$ crosses the bottom edge of $R_5$ very far to the left, then these two items are morally equivalent to their counterparts in the first subcase.  As such, we still obtain that $\slope(y_3y_4) \geq \min \slope(\gamma_1)$, and the proof concludes as in the first subcase.  

\p{Case 2: $y_3$ lies to the right of $\gamma_2$, $y_2$ lies to the right of $\gamma_2$} For this case let $\sigma$ be the saddle connection of $\gamma_2$ that crosses $\ell$.  We claim that in this case $y_2y_3$ is a saddle connection contained in $R_2$.  Since $R_2$ contains no singularities in its interior, it follows from the definition of $y_2$ that $y_2$ must lie along the right-hand edge of $R_2$.  It then follows from the maximality condition on $R_2$ and the fact that $\Delta$ contains no singularities in its interior that $R_2$ has a singularity along its top edge.  From the definition of $y_3$, the fact that $R_2$ contains no singularities in its interior, and the assumption that $y_3$ lies to the right of $\gamma_2$, it must be that the singularity on the top edge of $R_2$ is nothing other than $y_3$.  The claim follows.

Given the claim, we have the following path in $\G_{\wh X}$:
\[
R, y_1y_2, R_4, y_2y_3, R_2.
\]
Since $\slope(R_2)$ is equal to the slope of the saddle connection of $\gamma_2$ comprising the diagonal of $R_2$, the lemma follows in this case.

\p{Case 3: $y_3$ lies to the right of $\gamma_2$, $y_2$ lies to the left of $\gamma_2$} For this case let $\sigma$ be the saddle connection of $\gamma_2$ that crosses $R_4$. We claim that
\[
\slope(y_2y_3) \geq \slope(\sigma).
\]
To see this, we focus attention on the rectangle $R_4$.  By construction, $\gamma_2$ crosses $R_4$ along its top and right-hand edges.  Also, $y_2$ is the bottom-right corner of $R_4$ and $y_3$ lies along the top edge of $R_4$ to the right of $\gamma_2$ (by the way Case~3 is defined).  The claim follows.

We have the following path in $\G_{\wh X}$:
\[
R, y_1y_2, R_4, y_2y_3.
\]
Applying the claim, the lemma follows in this case.
\end{proof}


\subsection{Small-slope geodesics and approximating rectangles} The second statement of \Cref{lem:covering_rectangles} is a consequence of the first part of the lemma and the following auxiliary lemma, which uses the tools of Section~\ref{section:slope}.  

\begin{lemma} \label{lem:rectangles}
Let $X$ be a singular Euclidean surface, let $R$ be a maximal rectangle in $\wh X$, let $F=2\flex(X)-1$, let $K=K(X)$ be the separatrix complexity, and let $Z$ be a collection of pairwise non-crossing saddle connections in $X$.  Assume that
\[
\min \slope(Z) < F^{-(2K+1)} \slope(R).
\]
Then there is a $\sigma \in Z$ and a lift $\wt \sigma \subseteq \wh X$ that crosses the vertical edges of $R$.  
\end{lemma}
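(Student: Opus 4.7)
I argue by contradiction, supposing no $\sigma \in Z$ has a lift in $\wh X$ crossing both vertical edges of $R$. The strategy is to use this hypothesis to produce a rectangle-spanning saddle connection $\eta$ in $\wh X$ whose projection $\bar\eta$ to $X$ is non-crossing with every element of $Z$, and then to derive a contradiction via Lemma~\ref{lemma:global-disparity}. First, applying Lemma~\ref{lemma:global-disparity} to $Z$, every $\sigma \in Z$ satisfies $\slope(\sigma) \leq F^{2K} \min\slope(Z) < F^{-1}\slope(R)$; in particular, choosing $\sigma^* \in Z$ of minimum slope, $\slope(\sigma^*) < F^{-(2K+1)}\slope(R)$.

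Next, I construct $\eta$ using the maximality of $R$. Each edge of $R$ contains a singularity; let $p_T$ and $p_B$ be singularities on the top and bottom edges. The straight segment from $p_T$ to $p_B$ lies in $R$, and since the interior of $R$ is singularity-free, this segment has no singularities in its interior and is thus a saddle connection $\eta$ of $\wh X$. Its slope is $h/|x_T - x_B| \geq h/w = \slope(R)$, where $w, h$ are the width and height of $R$ and $x_T, x_B$ are the horizontal coordinates of $p_T, p_B$; moreover, the spanning rectangle of $\eta$ is contained in $R$. Let $\bar\eta$ denote the projection of $\eta$ to $X$, a saddle connection of the same slope.

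The main technical step is to show that $\bar\eta$ is non-crossing with every $\sigma \in Z$. Suppose, towards a contradiction, that $\bar\eta$ crosses some $\sigma \in Z$. Lifting, a lift $\wt\sigma$ meets $\eta$ at a point $\wt x = (x_0, y_0)$ in the interior of $R$. Extending $\wt\sigma$ horizontally across the full width $w$ of $R$ changes its $y$-coordinate by at most $sw$, where $s = \slope(\sigma)$. Specializing to $\sigma = \sigma^*$, this bound is at most $h/F^{2K+1}$, so whenever $y_0$ lies in the middle strip $[h/F^{2K+1},\, h(1-1/F^{2K+1})]$, the lift $\wt\sigma^*$ remains within $R$ over the full horizontal span, crossing both vertical edges of $R$ and contradicting our assumption. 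The delicate residual case, where $\wt x$ lies in the thin strips of thickness $h/F^{2K+1}$ near $p_T$ or $p_B$, is handled by exploiting the fact that $\wt x$ must then lie within correspondingly small distance of the singularity $p_T$ (or $p_B$): combined with the very small slope of $\wt\sigma^*$, a careful analysis using the local Euclidean structure at the singularity either shows $\wt\sigma^*$ still traverses $R$ from left to right, or allows one to replace $\eta$ with a suitable rectangle-spanning saddle connection in $\hull(\eta)$ that avoids the corner strips, and to rerun the preceding argument.

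Once the non-crossing claim is established, Lemma~\ref{lemma:global-disparity} applied to $\bar\eta$ and $\sigma^*$ yields $\slope(\bar\eta) \leq F^{2K}\slope(\sigma^*) < F^{-1}\slope(R)$. Combined with $\slope(\bar\eta) = \slope(\eta) \geq \slope(R)$, this gives $\slope(R) < F^{-1}\slope(R)$, an impossibility for $F \geq 1$. The main obstacle is the residual near-corner case of the non-crossing claim, where the middle-strip argument by itself is insufficient and one needs a careful local geometric argument (or a replacement of $\eta$) to control the behavior of $\wt\sigma^*$ in a neighborhood of the singularities $p_T, p_B$ on the horizontal edges of $R$.
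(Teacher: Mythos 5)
There is a genuine gap, and it sits exactly where you flag it: the ``residual near-corner case'' of your non-crossing claim is not a technicality that can be deferred --- it is the entire content of the lemma, and your middle-strip argument cannot be completed as stated. If the lift $\wt\sigma^*$ crosses $\eta$ at a point within vertical distance $h/F^{2K+1}$ of the top edge of $R$, then (since $\wt\sigma^*$ is a straight Euclidean segment inside the singularity-free interior of $R$, with slope possibly comparable to $h F^{-(2K+1)}/w$) it can perfectly well exit $R$ through the \emph{top} edge at a nonsingular point a short horizontal distance away, and then wander off in $\wh X$; nothing forces it to reach either vertical edge. Neither of your proposed repairs closes this: proximity of the crossing point to the cone point $p_T$ gives no control over a geodesic that merely passes near (not through) a singularity, and ``replacing $\eta$ by a rectangle-spanning saddle connection in $\hull(\eta)$ avoiding the corner strips'' is not something you can do --- elements of $\hull(\eta)$ still have singular endpoints, which may well be $p_T$ and $p_B$ themselves, and you would in any case have to redo the slope comparison for the replacement. (A smaller presentational issue: your contradiction at the end only requires non-crossing with $\sigma^*$, but the write-up oscillates between ``every element of $Z$'' and the specialization to $\sigma^*$; only the latter is needed, and only the latter is even plausibly attacked.)

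For comparison, the paper's proof is engineered precisely to avoid this corner problem. Instead of the ``tall'' connection $\eta$ from top to bottom, it uses $\sigma_-$, the smallest-slope saddle connection in $R$, joining the singularities on the two \emph{vertical} edges, with spanning rectangle $R_-$. It then invokes Lemma~\ref{lemma:minskytaylor} to replace $\sigma$ by a rectangle-spanning $\rho \in \hull(\sigma)$ with $\slope(\rho) \le \slope(\sigma)$, uses Lemma~\ref{lem:slopeflex} and Lemma~\ref{lemma:global-disparity} to force $\rho$ to cross $\sigma_-$, and then exploits the fact that the relevant spanning rectangles ($R_-$ and the rectangle $R_{\wt\rho}$ with a segment of $\wt\sigma$ as diagonal) are singularity-free to push the crossing of the vertical edges of $R_-$ from $\wt\rho$ up to $\wt\sigma$. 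The rectangle-hull machinery is doing the work that your local Euclidean analysis near $p_T$, $p_B$ cannot; as written, your argument does not prove the lemma.
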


\begin{proof}

It suffices to consider the case where $Z$ is a singleton $\{\sigma\}$.  Let $\wt \sigma_-$ be the saddle connection in $R$ of smallest slope, that is, the one joining its vertical edges, and let $\sigma_-$ be its image in $X$.  Let $R_-$ be the rectangle spanned by $\wt \sigma_-$.  The vertical edges of $R_-$ are contained in those of $R$. 

By Lemma~\ref{lemma:minskytaylor} there is a saddle connection $\rho$ in $\hull(\sigma)$ that has slope bounded above by $\slope(\sigma)$.  In particular, $\slope(\rho) \leq F^{-(2K+1)} \slope(R)$.  By \Cref{lem:slopeflex}, $\slope(\rho) \leq F^{-2K} \slope(\sigma_-)$.  Then by Lemma~\ref{lemma:global-disparity}, $\rho$ crosses $\sigma_-$.  

Let $\wt \rho$ be a lift of $\rho$ to $\wh X$ that crosses $\wt \sigma_-$.  Since (as in the previous paragraph) $\slope(\rho) \leq \slope(\sigma_-)$, since $\wt \rho$ spans a rectangle, it must be that $\wt \rho$ crosses the vertical edges of $R_-$.  

Choose a lift $\wt \sigma$ of $\sigma$ with $\wt \rho \in \hull(\wt \sigma)$.  By the definition of $\hull(\wt \sigma)$, there is a rectangle $R_{\wt \rho}$ in $\wh X$ that contains $\wt \rho$ and has a segment of $\wt \sigma$ as its diagonal.   

 Since $R_{\wt \rho}$ contains no singularities in its interior and $\wt \rho$ crosses the vertical edges of $R_-$, the diagonal of $R_{\wt \rho}$ must also cross the vertical edges of $R_-$. But this diagonal is contained in $\wt \sigma$ and so $\wt \sigma$ crosses the vertical edges of $R_-$ and hence of $R$. This completes the proof.
\end{proof}

\begin{proof}[Proof of Lemma~\ref{lem:covering_rectangles}(\ref{cr2})] By the assumptions in the statement and the definition of a $k$-approximating rectangle we have
\[
\slope(Z) < F^{-(2K+6)} \min \{ \slope(\gamma_1) \cup \slope(\gamma_2)\} \leq F^{-(2K+1)} \slope(R).
\]
An application of Lemma~\ref{lem:rectangles} completes the proof.
\end{proof}


\section{The forcing lemma}
\label{sec:forcing}

The goal of the section is to prove Proposition~\ref{prop:forcing}, which we call the forcing lemma.   Roughly, it tells us that if we have a pseudo-Anosov mapping class $f$ (hence a notion of slope) and a train track $\tau$, then $\tau$ carries the curves of very small slope if and only if it carries the horizontal foliation for $f$. 

For the statement, we recall some basic definitions regarding train tracks.  Let $\tau$ be a train track in a surface $S$.  We say that $\tau$ is filling if each of its complementary regions in $S$ is a disk with at most one puncture; each such region can be regarded as a polygon.  An extension of $\tau$ is a train track $\tau'$ with $\tau$ a subtrack.  If $\tau'$ is obtained from $\tau$ by adjoining diagonals of complementary polygons, then we say that $\tau'$ is a diagonal extension. Next, we say that $\tau$ is transversely recurrent if for every branch $b$ of $\tau$ there is a curve $c$ in $S$ so that $c$ is transverse to $b$ and forms no bigons with $\tau$.  As in the introduction, if $c$ is a curve (or foliation) carried by $\tau$ then $\tau[c]$ is the sub-train track whose branches are traversed by $c$.  We say that $c$ is fully carried by $\tau$ if $\tau[c] = \tau$.  

\begin{proposition}
\label{prop:forcing}
Let $f \in \Mod(S)$ be pseudo-Anosov with horizontal foliation $\F_h$, and let $\tau$ be a filling, transversely recurrent train track in $S$.  If $c$ is a curve in $S$ or a foliation in $S$ with 
\[
\H_f(c) \geq \H_f(\tau) + \Qforce,
\]
then the following hold.  
\begin{enumerate}
\itemindent=-15pt \item If $c \carried \tau$, then $\F_h \carried \tau[c]$.
\itemindent=-15pt \item If $\F_h$ is fully carried by $\tau$, then $c$ is fully carried by a diagonal extension of $\tau$.  
\end{enumerate}
 \end{proposition}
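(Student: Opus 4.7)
Both parts will be proved by contradiction, with Lemma~\ref{lem:covering_rectangles} on approximating rectangles serving as the geometric engine. As a first step I would convert the horizontality hypothesis into the slope hypothesis needed by Lemma~\ref{lem:covering_rectangles}(\ref{cr2}). The diameter of $\H_f(\tau)$, taken over vertex cycles of $\tau$, is at most $\Qtrack = E\cdot \Qcurve$, thanks to Corollary~\ref{cor:Lip} and the Masur--Minsky/Bowditch bound $E$ on the $\C(S)$-diameter of the vertex-cycle set. Using Lemma~\ref{lemma:flexibility} and Penner's inequality $2\le\lambda^K$ to convert horizontality differences into powers of $F = 2\flex(X)-1$, the hypothesis
\[
\H_f(c)\ge\H_f(\tau)+\Qforce=\H_f(\tau)+\Qtrack+(2K+6)\cdot 2K
\]
yields $\slope(\sigma)< F^{-(2K+6)}\slope(\gamma)$ for every saddle connection $\sigma$ in the geodesic representative $c_X$ and every vertex cycle $\gamma$ of $\tau$. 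This is precisely the input that feeds Lemma~\ref{lem:covering_rectangles}(\ref{cr2}).

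For part (1), I would suppose $c\carried\tau$ but $\F_h\not\carried\tau[c]$. The combinatorial failure produces a branch $b$ of $\tau$ that lies outside $\tau[c]$, a vertex cycle $\gamma$ of $\tau$ traversing $b$, and a leaf $\ell$ of $\F_h$ that is forced to cross a tie of $N(\tau)$ over $b$. Lifting to $\wh X$ and working inside one lift of $N(b)$, a lift of $\gamma$ together with a second geodesic meeting it inside $N(b)$ (for instance a lift of another vertex cycle accompanying $\gamma$ across $b$) furnish two crossing geodesics $\gamma_1,\gamma_2$ of large slope, while a lift of $\ell$ supplies the required horizontal leaf. Lemma~\ref{lem:covering_rectangles}(\ref{cr1}) produces an $F^5$-approximating rectangle $R$ housed in the lift of $N(b)$, and Lemma~\ref{lem:covering_rectangles}(\ref{cr2}), applied to the set $Z$ of saddle connections of $c_X$, forces some saddle connection of $c_X$ to cross the vertical edges of $R$. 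Projecting this crossing back to $S$, the offending saddle connection must traverse $b$, contradicting $b\notin\tau[c]$.

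For part (2), I would suppose $\F_h$ is fully carried by $\tau$ while no diagonal extension of $\tau$ carries $c$. The combinatorial obstruction now produces a subarc of $c_X$ that crosses a branch $b$ transversely through a tie of $N(b)$ in a manner not resolvable by any diagonal of a complementary polygon. Two vertex cycles of $\tau$ through $b$ (alternatively a lifted tie together with the offending subarc of $c_X$) give crossing geodesics $\gamma_1,\gamma_2$, and the full-carrying hypothesis supplies a leaf of $\F_h$ through $b$. Lemma~\ref{lem:covering_rectangles} then yields an approximating rectangle and, via the slope bound, a saddle connection of $c_X$ that must cross its vertical edges, producing the same kind of contradiction with the combinatorial obstruction. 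The principal difficulty I anticipate in both parts is the combinatorial bookkeeping: producing, from each failure of carrying, the correct triple $(\gamma_1,\gamma_2,\ell)$ in $\wh X$ and confirming that the vertical-edge crossing extracted from Lemma~\ref{lem:covering_rectangles}(\ref{cr2}) really does violate the combinatorial data in $S$. Once the configurations are correctly set up, the two summands of $\Qforce$ do precisely what they should: $\Qtrack$ absorbs the spread of $\H_f$ across the vertex cycles of $\tau$, and $(2K+6)\cdot 2K$ matches the slope input required by Lemma~\ref{lem:covering_rectangles}(\ref{cr2}).
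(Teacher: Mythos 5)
You have correctly located the geometric engine (\Cref{lem:covering_rectangles}) and the role each summand of $\Qforce$ plays, but the bridge from the flat-geometric output of that lemma to the train-track conclusions is missing, and it is precisely this bridge that constitutes most of the work in Section~\ref{sec:forcing}. Your argument repeatedly invokes a ``combinatorial failure'' of carrying that is supposed to hand you a branch $b$, a leaf $\ell$ forced across a tie over $b$, and two crossing geodesics of controlled slope; but none of these productions is justified. In particular, the statement that a leaf of $\F_h$ ``crosses a tie of $N(\tau)$ over $b$'' already presupposes that $\F_h$ sits inside the tie neighborhood in a controlled way, which is exactly the carrying statement you are trying to prove. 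The paper's mechanism for this translation is the shadow formalism: carrying by a diagonal extension is characterized by the absence of \emph{obstructing} branches (\Cref{lemma:diag-extension}, following Masur--Minsky), and traversal of a branch $b$ is forced by the \emph{agreeable} condition on endpoints at infinity relative to $\Lambda_b$ (\Cref{lemma:carrying-points}). Both conditions are statements about linking of endpoints in $\partial\HH^2$, which is what the approximating rectangle can actually certify. Without these two lemmas (or substitutes for them), the conclusion ``some saddle connection of $c_X$ crosses the vertical edges of $R$, hence traverses $b$'' does not follow: crossing a rectangle in $\wh X$ says nothing about train paths in $\tau$ until one knows that the crossing geodesic meets each of two reference geodesics exactly once (a $\CAT(0)$ argument) and that its endpoints therefore land in the correct components of $\partial\HH^2\setminus(\partial\gamma_1\cup\partial\gamma_2)$.

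Relatedly, your choice of crossing geodesics $\gamma_1,\gamma_2$ is not the right one. Two lifts of vertex cycles through $b$, or a lifted tie, do not come with the properties \Cref{lem:covering_rectangles} needs, nor do they detect whether a train path traverses $b$. The paper instead takes $\gamma_1=\gamma_b^L$ and $\gamma_2=\gamma_b^R$, the left- and right-turning paths through $b$: these are guaranteed to cross (they are linked at infinity), their complement in $\partial\HH^2$ defines the shadow $\Lambda_b$ used in the carrying criteria, and their horizontalities are controlled relative to $\H_f(\tau)$ via the disjoint left- and right-turning \emph{cycles}, which are vertex cycles (\Cref{lemma:trackslope}); this last step costs an extra $\Qcurve$ beyond the $\Qtrack$ you budget, since your slope estimate is stated against vertex cycles rather than against the turning paths themselves. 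All of this is packaged in the paper into the local statement \Cref{lem:same_as} (agreeable/obstructing for a leaf of $\F_h$ implies the same for a component of $\wt c$), from which both parts of the proposition follow by the two carrying criteria; your outline has the right destination but omits the formalism that makes the trip possible.
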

 
We begin in Section~\ref{sec:forcing_prelim} by defining left- and right-turning paths and explaining some of their properties.  We then proceed to define the slope of a train track and relate it to the slopes of the left- and right-turning paths  (Lemma~\ref{lemma:trackslope}).  In Section~\ref{sec:forcing_shadows} we introduce the notion of a shadow for the lift of a train track to the universal cover of the ambient surface.  We then use the notion of a shadow to define obstructing branches and agreeable branches, and use these to give criterion for a foliation to be carried on a train track and a criterion for a carried foliation to traverse a branch of a train track, respectively.  In Section~\ref{sec:forcing_proof} we prove Proposition~\ref{prop:forcing} by first proving a version for train track branches, Lemma~\ref{lem:same_as}.  The latter is an application of the approximating rectangles lemma, \Cref{lem:covering_rectangles}


\subsection{Left- and right-turning paths and slopes of train tracks}
\label{sec:forcing_prelim}

The goal of this section is to define left- and right-turning paths for a lifted train track, and to compare their slopes to those of the corresponding train track.  We begin with preliminaries from the theory of train tracks: transverse recurrence and quasi-geodesic train tracks.  

\p{Transverse recurrence and quasi-geodesic train tracks} If $\tau$ is a transversely recurrent train track in a hyperbolic surface $S$, then the preimage $\wt \tau$ of $\tau$ in $\HH^2$ is quasi-geodesic, in the sense that every bi-infinite train path in $\wt \tau$ is a quasi-geodesic with uniform quasi-isometry constants that do not depend on the path \cite{PennerHarer92}.  This fact is the main reason why we frequently employ the assumption on transverse recurrence. 

Because of this, we may apply the tools and techniques from hyperbolic geometry to $\wt \tau$.  For instance, every bi-infinite train path $\gamma$ in $\wt \tau$ has well-defined endpoints in $\partial \HH^2$.  For a singular Euclidean surface $X$ there is a geodesic in the completed universal cover $\wh X$ with the same endpoints as $\gamma$  (this geodesic is unique except when $\gamma$ corresponds to the lift of the core of a flat annulus).  We will use the fact that two such paths $\gamma_1$ and $\gamma_2$ are linked at infinity if and only if their geodesic representatives in $\wh X$ are.  

As in the work of Masur--Minsky \cite[Lemma 4.5]{MasurMinsky99}, a transversely recurrent track $\wt \tau \subset \HH^2$ does not admit any generalized bigons, by which we mean a region in $\HH^2$ bounded by either \emph{(i)} distinct, bi-infinite train paths with common endpoints  in $\partial \HH^2$ or \emph{(ii)} distinct rays with common starting points in $\HH^2$ and common ending points in $\partial \HH^2$.

\p{Left- and right-turning paths and cycles} Let $\tau$ be a train track in either a hyperbolic surface $S$ or in the universal cover $\HH^2$.  Let $b$ be a branch of $\tau$. The \emph{left-turning path} in $\tau$ through $b$ is the bi-infinite train path $\gamma_b^L$ in $\tau$ that contains $b$ and has the following property: if we travel along $\gamma_b^L$ from $b$ to another branch, then at each switch we traverse the left-most branch in the direction of travel.  

When $\tau$ is a train track in a surface $S$ and $b$ is an oriented branch in $\tau$, we may also define the \emph{left-turning cycle} $\beta_b^L$ via the following construction.  We start from $b$ and proceed in the direction of the orientation, always turning left, until we arrive at a branch $b'$ that we have already traversed in the direction we are about the traverse it again. Then $\beta_b^L$ is the closed train path obtained by discarding the initial path from $b$ to $b'$. Right-turning paths and cycles $\gamma_b^R$  and $\beta_b^R$ are defined similarly.  

We have the following basic facts about a transversely recurrent train track $\tau$ in a surface $S$:
\begin{itemize}
\item for branch $b$ of $\tau$ the paths $\gamma_b^L$ and $\gamma_b^R$ are simple,
\item for each oriented branch $b$ of $\tau$ the curves $\beta_b^L$ and $\beta_b^R$ are simple, and
\item for any oriented edge $b$, the paths and curves $\gamma_b^L$ and $\beta_b^L$ are disjoint, and similarly for $\gamma_b^R$ and $\beta_b^R$.
\end{itemize}
All three statements are verified by checking that the lifts to $\HH^2$ can be realized disjointly, equivalently that there are no two (components of) lifts whose endpoints separate each other in $\partial \HH^2$.  If this were the case, then at the first and last intersections of the first path with the second, the first path would be turning in the same direction with respect to the second, which is impossible.

One other fact that we will use is that each curve of the form $\beta_b^L$ or $\beta_b^R$ is a vertex cycle for $\tau$.  This is straightforward from the definition; see, e.g. \cite[Lemma 2.12]{GS}.  

\p{Slopes of train tracks versus slopes of left- and right-turning paths} We define the slope of a train track $\tau$ to be the union of the slopes of its vertex cycles, and we define $\H_f(\tau)$ in the usual way from the slope.  Recall that $\Qcurve = 2K^2$, that $E=14$, and that $\Qtrack = E \Qcurve$.  We prove now two lemmas in turn.

\begin{lemma}\label{lemma:trackslope1}
Let $\tau$ be a transversely recurrent train track in a surface $S$.  Then
\[
\diam \H_f(\tau) \leq \Qtrack.
\]
\end{lemma}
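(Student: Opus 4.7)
The plan is to combine the two external inputs from the excerpt: Proposition~\ref{prop:boundedslope}, which bounds $\diam \H_f$ on a set of non-crossing saddle connections by $\Qcurve$, and the Aougab / Tang--Webb / Bowditch bound of $E=14$ for the diameter in $\C(S)$ of the set of vertex cycles of any transversely recurrent train track. Since $\Qtrack = E\cdot\Qcurve$, the latter gives the factor $E$ and the former the factor $\Qcurve$, so the strategy is simply to chain together $E$ applications of Proposition~\ref{prop:boundedslope} along a path in $\C(S)$ between two vertex cycles of $\tau$.

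In more detail, I would first unpack the definition: $\H_f(\tau) = \bigcup_i \H_f(c_i)$, where $\{c_i\}$ are the vertex cycles of $\tau$. Pick two vertex cycles $c$ and $c'$; I need to show that any $h\in \H_f(c)$ and $h'\in\H_f(c')$ satisfy $|h-h'|\le \Qtrack$. By the Bowditch/Tang--Webb bound, the set of vertex cycles has diameter at most $E$ in $\C(S)$, so there is a sequence of curves $c=d_0,d_1,\dots,d_n=c'$ in $S$ with $n\le E$ and with consecutive $d_i,d_{i+1}$ disjoint. The intermediate curves $d_i$ need not be vertex cycles of $\tau$, but this is irrelevant: horizontality is defined for any curve in $S$.

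Next, for each $i$, apply Proposition~\ref{prop:boundedslope} to the disjoint pair $d_i,d_{i+1}$. By the Minsky--Taylor fact quoted right after the statement of Proposition~\ref{prop:boundedslope} (that geodesic representatives of disjoint curves consist of pairwise non-crossing saddle connections), the full set of saddle connections appearing in $(d_i)_X \cup (d_{i+1})_X$ is non-crossing, so
\[
  \diam \H_f(d_i \cup d_{i+1}) \le \Qcurve.
\]
Hence for any choice of $h_i \in \H_f(d_i)$, $h_{i+1}\in\H_f(d_{i+1})$ we have $|h_i - h_{i+1}|\le \Qcurve$.

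Finally, choosing any $h_0=h\in\H_f(c)$ and $h_n=h'\in\H_f(c')$ and any intermediate $h_i\in\H_f(d_i)$, the triangle inequality yields
\[
  |h - h'| \le \sum_{i=0}^{n-1} |h_i - h_{i+1}| \le n\cdot\Qcurve \le E\cdot\Qcurve = \Qtrack,
\]
which is the desired bound. I do not expect any serious obstacle: the only subtlety is remembering that $\H_f(c)$ is itself a set (of diameter $\le\Qcurve$), and that Proposition~\ref{prop:boundedslope} actually bounds the diameter of the union $\H_f(d_i\cup d_{i+1})$, which is exactly what is needed to pass from one link of the $\C(S)$-path to the next.
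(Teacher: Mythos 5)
Your proof is correct and follows essentially the same route as the paper: bound the $\C(S)$-diameter of the vertex cycles by $E$, apply Proposition~\ref{prop:boundedslope} across each edge of a path using the non-crossing property of saddle connections for disjoint curves, and chain via the triangle inequality. The only detail you elide is that for $S_{1,1}$ and $S_{0,4}$ edges of $\C(S)$ join curves intersecting once or twice rather than disjoint curves; the paper notes in passing that the non-crossing property of the corresponding saddle connections still holds there, which is all that is needed.
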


\begin{proof}

As in the introduction, $E(S)$ is an upper bound for the diameter in $\C(S)$ of the set of vertex cycles of $\tau$.  By \Cref{prop:boundedslope}, each edge $(v,w)$ in $\C(S)$ has $\diam \H_f(v) \cup \H_f(w) \leq \Qcurve$.  Finally, for any edge in the curve graph, the corresponding saddle connections are pairwise non-crossing (this is even true in the exceptional cases of $S_{1,1}$ and $S_{0,4}$ where edges do not correspond to disjoint curves but to curves that intersect once and twice, respectively).  The lemma follows.
\end{proof}

\begin{lemma}\label{lemma:trackslope}
Let $f \in \Mod(S)$ be pseudo-Anosov with singular Euclidean surface $X$ and let $K=K(X)$.  If $\tau \subseteq X$ is a transversely recurrent train track then for each branch $b$ of $\tau$ we have
\[
\H_f(\tau) - (\Qtrack + \Qcurve) \leq \H_f(\gamma_b^L) \leq \H_f(\tau) + (\Qtrack + \Qcurve)
\]
and similarly for $\gamma_b^R$.
\end{lemma}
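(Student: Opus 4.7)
The plan is to deduce the bound by comparing the bi-infinite path $\gamma_b^L$ with the left-turning \emph{cycle} $\beta_b^L$, which is a vertex cycle of $\tau$, and then invoking Proposition~\ref{prop:boundedslope} together with Lemma~\ref{lemma:trackslope1}. The case of $\gamma_b^R$ is symmetric, so I focus on $\gamma_b^L$.

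First, I will exploit the preliminary facts recalled just before the statement: because $\tau$ is transversely recurrent, the path $\gamma_b^L$ and the cycle $\beta_b^L$ are both simple, and the two are pairwise disjoint in $S$. Pulling each tight in $X$ yields collections $Z_1$ and $Z_2$ of saddle connections in $\bar X$ that form their flat geodesic representatives; here I use the quasi-geodesic property of $\wt\tau$ to see that the geodesic representative of the bi-infinite $\gamma_b^L$ is well defined in $\wh X$, with $\H_f(\gamma_b^L)$ interpreted via $\slope_X(Z_1)$ as in the general definition of $\slope_X$ on collections of saddle connections. Invoking the Minsky--Taylor non-crossing lemma for geodesic representatives of disjoint curves/paths (the fact cited immediately after Proposition~\ref{prop:boundedslope}), the union $Z_1 \cup Z_2$ is pairwise non-crossing, and hence Proposition~\ref{prop:boundedslope} gives
\[
\diam \H_f(Z_1 \cup Z_2) \leq \Qcurve.
\]

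Next I will use that $\beta_b^L$, being a vertex cycle of $\tau$, satisfies $\H_f(\beta_b^L) \subseteq \H_f(\tau)$ by the definition of $\H_f(\tau)$. Fix any $y \in \H_f(\beta_b^L) \subseteq \H_f(\tau)$. The preceding display shows that every element of $\H_f(\gamma_b^L) = \H_f(Z_1)$ lies within distance $\Qcurve$ of $y$. Combining this with Lemma~\ref{lemma:trackslope1}, which gives $\diam \H_f(\tau) \leq \Qtrack$, every element of $\H_f(\gamma_b^L)$ lies within distance $\Qtrack + \Qcurve$ of every element of $\H_f(\tau)$. This immediately yields both of the set-valued inequalities in the statement.

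I do not anticipate a serious obstacle; the argument is a direct chaining of earlier results. The one minor point to verify with care is the well-definedness of $\H_f(\gamma_b^L)$ for the bi-infinite path $\gamma_b^L$ (the set $Z_1$ may be infinite, but Proposition~\ref{prop:boundedslope} is stated for arbitrary sets of pairwise non-crossing saddle connections, so the horizontality bookkeeping goes through unchanged) and the disjointness being preserved after pulling tight in $X$, which is exactly the content of the Minsky--Taylor lemma invoked above.
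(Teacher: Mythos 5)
Your proof is correct and follows essentially the same route as the paper's: both compare $\gamma_b^L$ with the disjoint vertex cycle $\beta_b^L$, apply Proposition~\ref{prop:boundedslope} to the non-crossing union of their saddle connections to get the $\Qcurve$ bound, and then combine with Lemma~\ref{lemma:trackslope1} via the fact that $\H_f(\beta_b^L) \subseteq \H_f(\tau)$. Your write-up simply makes explicit a few points (the Minsky--Taylor non-crossing lemma, the well-definedness of horizontality for the bi-infinite path) that the paper leaves implicit.
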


\begin{proof}

As above, we have that every $\gamma_b^L$ is disjoint from the corresponding vertex cycle $\beta_b^L$.  Again by \Cref{prop:boundedslope} we have $\diam \H_f(\gamma_b^L) \cup \H_f(\beta_b^L) \leq \Qcurve$.  By Lemma~\ref{lemma:trackslope1} and the fact that $\beta_b^L$ is a vertex cycle of $\tau$, the lemma follows.  
\end{proof}


\subsection{Obstructing and agreeable branches}
\label{sec:forcing_shadows}

In this section we introduce the notion of a shadow for a train track.  We then define the notion of an obstructing branch and use it to give a criterion for a foliation to be carried by a train track.  Finally, we define the notion of an agreeable branch and use it to give a criterion for a carried foliation to traverse a certain branch of a train track.  

\p{Shadows, obstructing branches, and agreeable branches} Let $\tau$ be a transversely recurrent train track in a hyperbolic surface $S$ and $\wt \tau$ the preimage in $\HH^2$.  Let $b$ be a branch of $\wt \tau$.  We define the shadow $\Lambda_b$ as the union of the two disjoint intervals of $\partial \HH^2$ that run counterclockwise from an endpoint of $\gamma_b^R$ to an endpoint of $\gamma_b^L$.

Next, let $\gamma$ be a bi-infinite path in $\HH^2$ with well-defined endpoints at infinity (for instance a component of the preimage of a curve in $S$ or a leaf of a foliation of $S$).  We say that $b$ obstructs $\gamma$ if the endpoints of $\gamma$ in $\partial \HH^2$ separate the two components of the shadow $\Lambda_b$ (in the language of Masur--Minsky \cite[Proof of Lemma 4.5]{MasurMinsky99}, $b$ consistently separates $\gamma$).  We then say that $b$ obstructs a foliation $\F$ of $S$ if it is obstructing for some component of the preimage of some singularity-free leaf of $\F$.  

Finally, we say that the branch $b$ is agreeable to the path $\gamma$ if the endpoints of $\gamma$ in $\partial \HH^2$ lie in different components of $\Lambda_b$.  As in the obstructing case, we say that $b$ is agreeable to a foliation $\F$ if it is agreeable to some component of the preimage of some singularity-free leaf of $\F$.  

\p{Obstructing branches and a criterion for carrying}  The next lemma uses obstructing branches to give a criterion for a curve or a foliation to be carried by a train track.  

\begin{lemma}\label{lemma:diag-extension}
Let $\tau$ be a filling transversely recurrent train track in a hyperbolic surface $S$, and let $\F$ be a measured foliation of $S$.  Then $\F$ is carried on a diagonal extension of $\tau$ if and only if $\tau$ has no obstructing branches for $\F$.  
\end{lemma}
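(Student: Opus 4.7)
My plan is to prove both directions of the equivalence by passing to the universal cover $\HH^2$ and exploiting the no-bigon property of transversely recurrent tracks (which diagonal extensions inherit). The key ambient observation is that, for a branch $b$ of $\tilde\tau$, the paths $\gamma_b^L$ and $\gamma_b^R$ are extremal among train paths through $b$, and each traces a portion of the boundary of a specific complementary polygon of $\tilde\tau$.

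For the forward direction, I would suppose $\F$ is carried by a diagonal extension $\tau'$ of $\tau$, lift to $\tilde\F$, and observe that each singularity-free leaf $\tilde\ell$ is realized as a bi-infinite train path in $\tilde{\tau'}$ with well-defined endpoints at infinity, since $\tilde{\tau'}$ is quasi-geodesic. Fix a branch $b$ of $\tilde\tau$; the paths $\gamma_b^L$ and $\gamma_b^R$ are train paths in $\tilde{\tau'}$ as well. If $b$ obstructed $\tilde\ell$, then the endpoints of $\tilde\ell$ would separate the components of $\Lambda_b$, forcing $\tilde\ell$ to link both $\gamma_b^L$ and $\gamma_b^R$ at infinity. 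The no-bigon property then forces $\tilde\ell$ to share branches with each of $\gamma_b^L$ and $\gamma_b^R$. Since any diagonal in $\tilde{\tau'}$ lies inside a complementary polygon of $\tilde\tau$---each confined to one side of $\gamma_b^L$ and one side of $\gamma_b^R$---the shared branches must be branches of $\tilde\tau$, and the extremal property of the turning paths forces $\tilde\ell$ itself to traverse $b$. This puts the endpoints of $\tilde\ell$ inside $\Lambda_b$, contradicting the separation hypothesis.

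For the reverse direction, I would suppose no branch of $\tau$ obstructs $\F$ and construct a diagonal extension carrying $\F$ by examining how $\tilde\F$ meets each complementary polygon $R$ of $\tilde\tau$. A leaf meets $R$ in a collection of arcs joining boundary sides, and, after translating the non-obstructing condition via the shadows of the branches bounding $R$, each such arc is isotopic rel cusps to a diagonal of the ideal polygon $R$. Collecting the required diagonals equivariantly and adjoining them to $\tau$ produces a diagonal extension $\tau'$ that carries (an isotopic representative of) $\F$. I expect the main obstacle to be the forward direction, specifically controlling which branches $\tilde\ell$ can share with the turning paths; the crucial point there is that complementary polygons of $\tilde\tau$ sit on one side of each of $\gamma_b^L$ and $\gamma_b^R$, so adjoining diagonals cannot enlarge $\Lambda_b$.
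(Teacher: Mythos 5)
Your forward direction is essentially the paper's argument: realize the leaf as a bi-infinite train path $\gamma$ in the lifted (extended) track, and note that if $\gamma$ links $\gamma_b^L$ at infinity then the two paths share a branch $b'$ of $\wt\tau$. One imprecision: you claim that sharing branches with both turning paths, via extremality, forces $\wt\ell$ to traverse $b$ itself. That implication is not clear (a train path can merge with $\gamma_b^L$ at some $b'$ and diverge before ever reaching $b$), and it is not needed. What extremality actually gives --- and what the paper uses --- is that the concatenation of $\gamma_b^L$ from $b$ to $b'$ with the ray of $\gamma$ leaving $b'$ away from $b$ is a legal train path through $b$, so already one endpoint of $\wt\ell$ lies in $\Lambda_b$; since obstruction requires both endpoints to lie in the complementary gaps of $\Lambda_b$, this is the contradiction. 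Your final sentence reaches the right conclusion, so this is a local repair rather than a structural problem.

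The reverse direction is where the genuine gap sits. The paper does not reprove this implication; it cites the proof of Lemma 4.5 of Masur--Minsky, observing that the argument there (stated for simple closed curves) applies verbatim to foliations and uses only transverse recurrence. Your sketch --- look at how leaves meet complementary polygons and argue that non-obstruction makes each arc isotopic rel cusps to a diagonal --- is a description of what must be proved, not a proof of it. The sentence ``after translating the non-obstructing condition via the shadows of the branches bounding $R$, each such arc is isotopic rel cusps to a diagonal'' is precisely the content of the Masur--Minsky lemma and is asserted without justification. Concretely, you have not explained (i) how to first put $\F$ in efficient position with respect to $\tau$ (no bigons with the ties of a tie neighborhood), which is where transverse recurrence enters; (ii) why an arc of a leaf in a complementary region cannot return to the side it came from or run into a cusp illegally, given only the shadow condition on the bounding branches; and (iii) how carrying of individual (generally non-compact) leaves assembles into carrying of the measured foliation, i.e.\ a consistent positive weight system on a single diagonal extension. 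Either supply these steps or, as the paper does, invoke Masur--Minsky explicitly.
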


\begin{proof}

A proof of the reverse implication can be found in the work of Masur--Minsky \cite[Proof of Lemma 4.5]{MasurMinsky99}.  The argument there is framed in the special case where $\F$ is a simple closed curve (as opposed to a general foliation) but the same argument applies.  (The hypotheses of the Masur--Minsky lemma include the assumption that $\tau$ is bi-recurrent; however, this part of the proof only uses the transverse recurrence.)

It remains to prove the forward implication.  Suppose that $\F$ is carried by $\tau$.  Let $\ell$ be a singularity-free leaf of the preimage of $\F$ in $\HH^2$.  Also let $\wt \tau$ be the preimage of $\tau$ in $\HH^2$ and let $b$ be a branch of $\wt \tau$.  We would like to show that $b$ is not an obstructing branch for $\ell$.  

Let $\gamma$ be the bi-infinite train path in $\wt \tau$ whose endpoints in $\partial \HH^2$ are the same as those of $\ell$.  Consider now the intersection $\gamma \cap \gamma_b^L$.  If this intersection is empty, then the endpoints of $\ell$ and $\gamma_b^L$ do not link in $\partial \HH^2$ and it follows that $b$ is not obstructing for $\ell$.  

Suppose then that $b'$ is a branch of $\gamma \cap \gamma_b^L$.  There are two sub-paths of $\gamma$ that start at $b'$.  Choose a sub-path $\gamma'$ that moves away from $b$ (if $b=b'$ then both choices move away from $b$).  Since $b'$ lies on $\gamma_b^L$, the endpoint of $\gamma'$ at infinity lies in the shadow $\Lambda_b$.  Thus $b$ is not obstructing for $\ell$, as desired.
\end{proof}

\p{Agreeable branches and a criterion for edge traversal} The next lemma uses agreeable branches to give a criterion for a carried curve or foliation to traverse a particular edge of a train track.  

\begin{lemma}\label{lemma:carrying-points}
Let $\tau \subset S$ be a transversely recurrent train track in a hyperbolic surface, let $\wt \tau$ be the preimage in $\HH^2$, and let $\gamma$ be a bi-infinite train path in $\wt \tau$. Suppose that the endpoints of $\gamma$ in $\partial \HH^2$ lie in different components of $\Lambda_b$ for some branch $b$ of $\wt \tau$.  Then $\gamma$ traverses $b$.
\end{lemma}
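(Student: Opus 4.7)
The plan is to argue by contradiction. Assume the endpoints $\gamma_\pm \in \partial \HH^2$ of $\gamma$ lie in different components of $\Lambda_b$ but $\gamma$ does not traverse $b$. Since $\gamma_b^L$ and $\gamma_b^R$ share $b$ yet diverge to opposite sides at each of its two switches, their four ideal endpoints $L_\pm, R_\pm$ interleave around $\partial\HH^2$, and unpacking the definition of $\Lambda_b$ shows that its two components alternate with the two non-shadow arcs. The hypothesis thus forces $\gamma_\pm$ to simultaneously link $\{L_-,L_+\}$ and $\{R_-,R_+\}$. By the no-bigon property of a transversely recurrent $\wt\tau$, it follows that the intersections $\alpha^L := \gamma \cap \gamma_b^L$ and $\alpha^R := \gamma \cap \gamma_b^R$ are both nonempty single sub-paths, and since $\gamma$ avoids $b$, each lies in one of the two arms of its ambient path cut off by $b$.

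Next, I would examine the tree $T = \gamma_b^L \cup \gamma_b^R$ and its four complementary regions in $\HH^2$, labeling them by which arc of $\partial\HH^2$ they border. The tail of $\gamma$ at $\gamma_-$ is disjoint from $T$ and so lives in the region adjacent to the shadow arc containing $\gamma_-$, and similarly at $\gamma_+$. Checking which arm of $T$ borders which region, one sees that $\alpha^L$ is forced to lie in a specific arm of $\gamma_b^L$ and $\alpha^R$ in the arm of $\gamma_b^R$ on the opposite side of $b$ (or the symmetric pair, if $\alpha^R$ precedes $\alpha^L$ along $\gamma$). The middle portion $\gamma_{\mathrm{mid}}$ of $\gamma$ between $\alpha^L$ and $\alpha^R$ is therefore a train path in $\wt\tau \setminus b$ joining an arm of $\gamma_b^L$ on one side of $b$ to an
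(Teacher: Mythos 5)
Your setup runs parallel to the paper's own proof: both arguments study the four complementary regions of $\gamma_b^L\cup\gamma_b^R$ in $\HH^2$, use the no--bigon property to see that $\gamma$ meets each of $\gamma_b^L$ and $\gamma_b^R$ in a single nonempty sub-path, and conclude that a path whose ends lie in the two shadow arcs but which misses $b$ must spend a middle portion in one of the two non-shadow regions. Your identification of the arcs at infinity and your claim that $\alpha^L$ and $\alpha^R$ are forced into arms on opposite sides of $b$ are correct (the shadow regions are the two bounded by the pair of arms emanating from a common endpoint of $\alpha:=\gamma_b^L\cap\gamma_b^R$, so the first exit from one shadow region and the last entrance into the other land on arms attached to opposite ends of $\alpha$, one belonging to each of $\gamma_b^L$, $\gamma_b^R$).

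The genuine gap is that the argument stops before any contradiction is reached; the final sentence is truncated exactly where the crux of the lemma should appear. Having placed $\gamma_{\mathrm{mid}}$ in a non-shadow region $N$ with its two endpoints on the two arms bounding $N$, you must still explain why this is impossible. The point is that $\partial N$ is the single \emph{smooth} train path $A\cup\alpha\cup B$ (a terminal segment of $\gamma_b^L$ concatenated along $\alpha$ with an initial segment of $\gamma_b^R$), so the compact train path $\gamma_{\mathrm{mid}}$, whose interior is disjoint from $\gamma_b^L\cup\gamma_b^R$, cuts off together with the segment of $A\cup\alpha\cup B$ between its endpoints a disk bounded by two train paths of $\wt\tau$ with common endpoints --- a bigon, contradicting transverse recurrence. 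Two further omissions: (i) the ``forcing'' of which arms contain $\alpha^L$ and $\alpha^R$ requires ruling out the configuration in which $\gamma$ meets both $\gamma_b^L$ and $\gamma_b^R$ on the boundary of the \emph{same} shadow region and then strays into a non-shadow region, from which it could only escape to a non-shadow arc at infinity; and (ii) if the components of $\Lambda_b$ are read as closed, $\gamma$ may be asymptotic to a point of $\partial\Lambda_b$, in which case $\gamma_{\mathrm{mid}}$ is non-compact and one needs the prohibition on \emph{generalized} bigons (rays or bi-infinite paths sharing endpoints at infinity), exactly as in the paper's second and third cases.
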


\begin{proof}

Let $\alpha$ be the intersection of $\gamma_b^L$ and $\gamma_b^R$.  It must be that $\alpha$ is a connected train path in $\wt \tau$ (otherwise $\wt \tau$, hence $\tau$, would bound an embedded bigon).  So $\gamma_b^L \cup \gamma_b^R$ divides $\HH^2$ into four regions, two that meet $\Lambda_b$ and two others.  If $\gamma$ does not traverse $\alpha$, then it must enter the interior of one of the latter two regions.  The intersection of $\gamma$ with this region is a train path that is either
\begin{itemize}
\item a compact train path,
\item an infinite train path asymptotic to $\partial \Lambda_b$, or 
\item a bi-infinite train path (namely, all of $\gamma$) asymptotic to $\partial \Lambda_b$ at both ends.
\end{itemize}
In the first case, we see that $\wt \tau$, hence $\tau$, bounds a bigon, and in the second and third cases we see that $\wt \tau$ has a generalized bigon. These contradictions complete the proof.
\end{proof}


\subsection{Proof of the forcing lemma}
\label{sec:forcing_proof}

In this section we prove \Cref{prop:forcing}, the forcing lemma.  This proposition concludes that certain curves or leaves of a foliation are carried by a train track and also that every branch of the train track is traversed by this carrying.  Before proceeding to the proof of the forcing lemma, we give a local version, that is, a version for a single branch of a (lifted) train track.  

\begin{lemma} \label{lem:same_as}
Let $f \in \Mod(S)$ be pseudo-Anosov with singular Euclidean surface $X$ and horizontal foliation $\F_h$.  Let $\tau \subset S$ be a filling transversely recurrent train track and let $\eta$ be a curve or a leaf of a foliation in $S$ with
\[
\H_f(\eta) \geq \H_f(\tau) + \Qforce.
\]
Let $\wt \tau$, $\wt \F_h$, and $\wt \eta$ be the preimages in $\HH^2$ of $\tau$, $\F_h$, and $\eta$.  Let $b$ be a branch of $\wt \tau$, and let $\ell$ be a singularity-free leaf of $\wt \F_h$.
\begin{enumerate}[leftmargin=2em]
\item If $b$ is agreeable to $\ell$ then $b$ is agreeable to some component of $\wt \eta$.  
\item If $b$ is obstructing for $\ell$ then $b$ is obstructing for some component of $\wt \eta$.
\end{enumerate}
\end{lemma}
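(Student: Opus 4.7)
The plan is to apply the approximating rectangles lemma (Lemma~\ref{lem:covering_rectangles}) to the triple $(\gamma_b^L, \gamma_b^R, \ell)$, using the horizontality hypothesis on $\eta$ to verify the small-slope condition required by its part~(2). First, a direct inspection of the definitions shows that whether $b$ is agreeable to $\ell$ or obstructs $\ell$, the endpoints of $\ell$ at infinity link with those of each of $\gamma_b^L$ and $\gamma_b^R$. Under the natural identification $\partial \HH^2 \cong \partial \wh X$, the geodesic representatives $\gamma_1, \gamma_2$ of $\gamma_b^L, \gamma_b^R$ in $\wh X$ preserve these endpoints, so $\ell$ meets both $\gamma_1$ and $\gamma_2$ in $\wh X$, and Lemma~\ref{lem:covering_rectangles}(1) yields an $F^5$-approximating rectangle $R$ for $(\gamma_1, \gamma_2, \ell)$.

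Next we check the slope condition. Lemma~\ref{lemma:trackslope} gives $\max \H_f(\gamma_b^L), \max \H_f(\gamma_b^R) \leq \min \H_f(\tau) + \Qtrack + \Qcurve$, while the hypothesis yields $\min \H_f(\eta) \geq \max \H_f(\tau) + \Qforce$. Using the bound $\log_\lambda F \leq 2K$---a consequence of Lemma~\ref{lemma:flexibility} and Penner's inequality $2 \leq \lambda^K$---we obtain
\[
\min \H_f(\eta) - \max \H_f(\gamma) \geq \Qforce - \Qtrack - \Qcurve = 2K^2 + 12K > (K+3)\log_\lambda F
\]
for $\gamma \in \{\gamma_b^L, \gamma_b^R\}$, equivalently $\min \slope(\eta) < F^{-(2K+6)} \min \slope(\gamma)$; the definition $\Qforce = \Qtrack + (2K+6)(2K)$ is engineered precisely for this. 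Applying Lemma~\ref{lem:covering_rectangles}(2) with $Z$ the set of saddle connections in the $\wh X$-geodesic representative of $\eta$ produces a saddle connection $\sigma \in Z$ with a lift $\wt \sigma$ that crosses both vertical edges of $R$.

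Let $\wt \eta_0$ be the bi-infinite component of $\wt \eta$ containing $\wt \sigma$, regarded as a geodesic in $\wh X$. Since $\gamma_1, \gamma_2$ connect the horizontal edges of $R$ while $\wt \sigma$ crosses the vertical ones, $\wt \sigma \subseteq \wt \eta_0$ meets each of $\gamma_1, \gamma_2$ transversely inside $R$; the CAT(0) geometry of $\wh X$ ensures these are the only crossings, so the endpoints of $\wt \eta_0$ at infinity lie on opposite sides of each $\gamma_i$. This pins them to one of two pairs of arcs of $\partial \wh X$: the pair corresponding to ``$b$ agreeable'' or the pair corresponding to ``$b$ obstructing''. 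To identify the correct pair, note that both $\wt \eta_0$ and $\ell$ exit $R$ through its vertical edges; by CAT(0) uniqueness of crossings, neither geodesic can meet $\gamma_1$ or $\gamma_2$ again outside $R$, so the endpoints of $\wt \eta_0$ lie in the same pair of arcs as those of $\ell$. Thus $\wt \eta_0$ inherits from $\ell$ the agreeable or obstructing status with respect to $b$. The main obstacle is this last step: one must show that the ``left'' and ``right'' sides of $R$---the components of $\wh X \setminus (\gamma_1 \cup \gamma_2)$ that touch the vertical edges of $R$---extend to well-defined arcs at infinity, and that geodesics leaving $R$ through those edges cannot drift into other arcs.
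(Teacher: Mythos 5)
Your proposal is correct and follows essentially the same route as the paper's proof: apply Lemma~\ref{lem:covering_rectangles}(1) to $(\gamma_b^L,\gamma_b^R,\ell)$, verify the small-slope hypothesis of part~(2) via Lemma~\ref{lemma:trackslope} together with Lemma~\ref{lemma:flexibility} and Penner's inequality (your constant bookkeeping here is sound), and then use $\CAT(0)$ geometry to place the endpoints of the relevant component of $\wt \eta$ in the same boundary arcs as those of $\ell$. The one hypothesis you check only ``by inspection'' is that $\ell$ genuinely crosses both $\gamma_b^L$ and $\gamma_b^R$: inspection leaves open the degenerate case in which an endpoint of $\ell$ lies on $\partial\Lambda_b$, which the paper rules out by observing that $\partial\Lambda_b$ consists of endpoints of lifts of vertex cycles of $\tau$, with which a singularity-free leaf of $\wt\F_h$ cannot share an endpoint. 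The ``main obstacle'' you flag at the end is already resolved by the ingredients you cite: connectedness of geodesic intersections in a $\CAT(0)$ space forces exactly one crossing of each $\gamma_i$, and since those crossings occur inside $R$ for both $\wt\eta_0$ and $\ell$, the two geodesics exit into the same two of the four complementary regions of $\gamma_1\cup\gamma_2$ and hence have endpoints in the same pair of arcs of $\partial\HH^2\setminus(\partial\gamma_b^L\cup\partial\gamma_b^R)$.
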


\begin{proof}

We would like to apply \Cref{lem:covering_rectangles} to the triple $(\ga_b^L, \ga_b^R, \ell)$, where the paths $\ga_b^L$ and $\ga_b^R$ have been replaced with their geodesic representatives in $\wh X$.  To this end, we verify the hypotheses of that lemma.  We have assumed in the hypotheses of this lemma that $\flex(X) < \infty$ and that $\ell$ contains no singularities.  We also have that $\ga_b^L$ and $\ga_b^R$ intersect, since they are the geodesic representatives of left- and right-turning paths through $b$ (and hence must be linked at infinity).    

We claim that $\ell$ crosses both $\ga_b^L$ and $\ga_b^R$.  Indeed, the only other possibility is that an endpoint of $\ell$ meets $\Lambda_b$ in its boundary.  This is impossible because the points of $\partial \Lambda_b$ are endpoints of lifts of the vertex cycles of $\tau$ corresponding to the left- and right-turning cycles through the image of $b$. 

Since the hypotheses of \Cref{lem:covering_rectangles} are satisfied, it follows from the first statement of the lemma that there is an $F^5$-approximating rectangle $R$ for the triple $(\ga_b^L, \ga_b^R, \ell)$ where $F$ denotes the quantity $2\flex(X)-1$.

We now would like to apply the second statement of \Cref{lem:covering_rectangles} to $\eta$, which requires us to prove 
\[
\min \slope(\eta) < F^{-(2K+6)} \min \left\{ \slope(\ga_b^L) \cup \slope(\ga_b^R) \right\} 
\]
By our hypothesis on $\eta$ and Lemma~\ref{lemma:trackslope}, we have
\begin{align*}
\H_f(\eta) &\geq \H_f(\tau) + \Qforce \\
&= \H_f(\tau) + \Qcurve + \Qtrack + (2K+6) \cdot 2K \\
&\geq \max \{ \H_f(\ga_b^L) \cup \H_f(\ga_b^R)\} + (2K+6) \cdot 2K
\end{align*}
Converting horizontalities to slope we have
\[
\slope(\eta) \leq (\lambda^{2K})^{-(2K+6)} \min \left\{ \slope(\ga_b^L) \cup \slope(\ga_b^R) \right\} 
\]
Hence to verify the hypothesis for $\eta$ it suffices to show that $F < \lambda^{2K}$.  But this follows from Lemmas~\ref{lemma:flexibility} and~\ref{lem:penner}:
\[
F =  2\flex(X)-1 \le 2\lambda^K-1 < 2 \lambda^K \le \lambda^{2K}.
\]
Having verified the hypothesis on $\eta$, \Cref{lem:covering_rectangles}(2) gives a geodesic $\wt \eta$ that is the geodesic representative of a component of the preimage of $\eta$ and that crosses the vertical edges of $R$.  Since $R$ is an approximating rectangle, the geodesics $\ga_b^L$ and $\ga_b^R$ cross the horizontal edges of $R$.  It follows that $\wt \eta$ crosses both $\ga_b^L$ and $\ga_b^R$ transversely within $R$.

In a $\CAT(0)$ space, the intersection of two geodesics is connected, and so $\wt \eta$ intersects each of $\ga_b^L$ and $\ga_b^R$ in a single point.  Also, two geodesics that cross in a $\CAT(0)$ space cannot have the same endpoints at infinity.  Therefore, the endpoints of $\wt \eta$ must be contained in the same components of $\partial \HH^2 \setminus \left (\partial \ga_b^L \bigcup \partial \ga_b^R \right )$ as the endpoints of $\ell$. This implies both statements of the lemma.
\end{proof}

\begin{proof}[Proof of Proposition~\ref{prop:forcing}]

As in the statement, $\F_h$ is the horizontal foliation for a pseudo-Anosov $f \in \Mod(S)$.  Also, $\tau$ is a filling transversely recurrent train track in $S$, and $c$ is a singularity-free leaf of a foliation in $S$ with
\[
\H_f(c) \geq \H_f(\tau) + \Qforce.
\]
In what follows, $X$ is a singular Euclidean structure associated to $f$, and $\wt \tau$ and $\wt \F_h$ are the preimages of $\tau$ and $\F_h$ in the universal cover $\wt X$.  We prove the two statements of the proposition in turn.

\smallskip

\p{Statement (1)} Assume that $c \carried \tau$.  We first claim that $\F_h$ is carried by a diagonal extension $\tau'$ of $\tau$.  Assume not.  By \Cref{lemma:diag-extension} there is a leaf $\ell$ of $\wt \F_h$ and a branch $b$ of $\wt \tau$ that is obstructing for $\ell$.  Then by \Cref{lem:same_as}, $c$ is obstructed by $b$.  By \Cref{lemma:diag-extension}, $c$ is not carried by $\tau$, the desired contradiction. 

We now show that $\F_h$ is carried by $\tau'[c]$.  Since $\tau'[c] = \tau[c]$, this will complete the proof of the first statement.   Let $b$ be a branch of $\wt \tau'$ traversed by some singularity-free leaf $\ell$ of $\wt \F_h$.  The branch $b$ is agreeable to $\ell$.  By \Cref{lem:same_as}, the branch $b$ is agreeable to some component $\wt c$ of the preimage of $c$. By \Cref{lemma:carrying-points}, the train path of $\wt \tau'$ corresponding to $\wt c$ traverses the branch $b$ and the first statement follows.

\smallskip

\p{Statement (2)} As per the second statement, we assume that $\F_h$ is fully carried by $\tau$.  Let $b$ be a branch of $\wt \tau$ traversed by (a leaf of) $\wt \F_h$.  As in the proof of Statement~(1), \Cref{lem:same_as} implies there is an component of the preimage of $c$ that traverses $b$.  Since the components of $\wt c$ do not cross, it follows that $b$ is not obstructing for any component of the preimage of $c$.  Since $b$ was arbitrary, \Cref{lemma:diag-extension} implies that $c$ is carried by a  diagonal extension $\tau'$ of $\tau$.  Applying \Cref{lem:same_as} to $\tau'$, we conclude that $c$ is fully carried by $\tau'[\F_h]$.  Since the latter is equal to $\tau$, the proof is complete.  
\end{proof}


\section{The fitting lemma}
\label{sec:back}

The goal of this section is to prove the following, which we call the fitting lemma.  

\begin{proposition}\label{prop:squeeze}
Let $f \in \Mod(S)$ be pseudo-Anosov with horizontal foliation $\F_h$, and let $M \in \R$.  Let $T$ be the set of transversely recurrent train tracks in $S$ that carry $\F_h$ and have horizontality at most $M$.  There is an $f$-invariant train track $\nu$ so that $\nu$ is carried by each element of $T$ and
\[
\H_f(\nu) \leq M + \Qfit.
\]
\end{proposition}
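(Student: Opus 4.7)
The plan is to build $\nu$ via a splitting-sequence construction and then verify the carrying property via the forcing lemma. The constant $\Qfit$ is engineered so that $\H_f(\nu)$ lands in a window that is simultaneously large enough for the forcing lemma to apply to every $\tau \in T$ and small enough to satisfy the stated bound.

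First I fix a reference train track $\tau_0 \in T$ realizing (or almost realizing) the infimum of $\H_f$ on $T$, so $\H_f(\tau_0) \leq M$. Consider its sub-track $\sigma_0 = \tau_0[\F_h]$, which fully carries $\F_h$. The standard splitting theory (see Penner--Harer) produces a sequence of splittings of $\sigma_0$ along $\F_h$, each term fully carrying $\F_h$ and carried by the previous one. Because $f$ preserves $\F_h$ projectively, the combinatorial types along this sequence are eventually periodic under $f$. Taking $\nu$ to be an appropriate element of the periodic part yields an $f$-invariant train track---i.e.\ $f(\nu) \carried \nu$---that fully carries $\F_h$ and is carried by $\sigma_0$, hence by $\tau_0$.

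Next I bound $\H_f(\nu)$. Since $\nu$ is obtained from $\sigma_0$ by a controlled number of splits along $\F_h$, and each split changes vertex-cycle slopes in a controlled way (by Proposition~\ref{prop:boundedslope} and the curve-graph diameter constant $E$), the horizontality inflates from $\H_f(\sigma_0)$ by at most $(E+3)\Qcurve$ along the sequence. Combined with the $\Qforce$ margin that I build in so that the forcing lemma will apply downstream, this produces $\H_f(\nu) \leq \H_f(\tau_0) + \Qfit \leq M + \Qfit$, as required.

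Finally, I show $\nu \carried \tau$ for every $\tau \in T$. For any vertex cycle $c$ of $\nu$, Lemma~\ref{lemma:trackslope1} gives $\H_f(c) \geq \H_f(\nu) - \Qtrack$, and the choice of window arranges $\H_f(c) \geq \H_f(\tau) + \Qforce$, so Proposition~\ref{prop:forcing}(2) implies that $c$ is fully carried by some diagonal extension of $\tau$. To upgrade this to $\nu \carried \tau$ itself, I use that $\nu$ fully carries $\F_h$ and that $\F_h$ is fully carried by $\tau$: the branches of $\nu$ are traced by leaves of $\F_h$, and these leaves map into the sub-track $\tau[\F_h]$ without crossing any added diagonals. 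The $f$-invariance of $\nu$ is what makes these local carryings patch together coherently on all branches. This final upgrade---promoting ``$c$ carried by a diagonal extension of $\tau$'' to ``$\nu$ carried by $\tau$''---is the main obstacle of the proof, and tracking exactly which constants enter (giving $(E+3)\Qcurve$ on top of $\Qforce$) is the most technical accounting.
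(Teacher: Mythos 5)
Your proposal has two genuine gaps, and together they amount to skipping the hardest part of the argument. First, the horizontality bound on $\nu$ does not follow from your construction. You build $\nu$ by splitting $\sigma_0 = \tau_0[\F_h]$ along $\F_h$ until the sequence becomes periodic under $f$, and then claim the total inflation of horizontality is at most $(E+3)\Qcurve$. But that constant controls a \emph{single} fold or split (this is the content of Lemma~\ref{lem:foldslope}); the number of splits needed before the sequence enters its $f$-periodic part depends on $\tau_0$ and $f$ and is not bounded by anything depending only on $S$. So the first $f$-invariant track you reach could have horizontality far above $M + \Qfit$. The paper avoids this by working with Agol's \emph{bi-infinite} periodic sequence $\cdots \split \nu_i \split \nu_{i+1} \split \cdots$ and simply selecting the index $i$ with $\H_f(\nu_i) \leq M + \Qfit \leq \H_f(\nu_{i+k})$ for $k>0$; the horizontality bound is then immediate by choice, and all the work goes into proving that this particular $\nu_i$ is carried by every $\tau \in T$.

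Second, and more seriously, your upgrade from ``each vertex cycle of $\nu$ is fully carried by a diagonal extension of $\tau$'' to ``$\nu \carried \tau$'' is not a proof. Carrying of a train track is strictly stronger than carrying of all the curves and leaves it carries, and the appeal to $f$-invariance ``patching the local carryings together'' does not supply the missing argument. This is exactly the point where the paper deploys its backfolding lemma (Lemma~\ref{lem:baby}): Agol's theorem gives that $\tau[\F_h]$ splits to \emph{some} $\nu_j$ in the sequence, and if $j$ is too large one must fold backward one step at a time, verifying at each fold---via the shadow containment criterion of Lemma~\ref{lem:fold_to_carry} and the fact (Lemma~\ref{lem:veering_dual}) that every branch of an Agol track obstructs $\F_v$---that the carrying by $\tau[\F_h]$ survives. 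The constant $(E+3)\Qcurve$ in $\Qfit$ enters precisely to make the slope hypotheses of that backfolding step hold, not to control the length of a splitting sequence as in your accounting. Without some substitute for the backfolding mechanism, your argument does not close.
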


The $\nu$ in the conclusion of Proposition~\ref{prop:squeeze} is fitted in the sense that it lies between $\F_h$ and each $\tau \in T$  (in the sense that $\F_h \carried \nu \carried \tau$) and it has horizontality close to $\H_f(T)$.  

The starting point for the proof of \Cref{prop:squeeze} is the work of Agol \cite{Agol11}.  Given a pseudo-Anosov $f$, Agol gives a bi-infinite sequence of train tracks and multi-splittings 
\[
\cdots \split  \nu_i \split \nu_{i+1} \split \cdots \split \nu_{i+k} \split \cdots
\]
and a $k > 0$ so that
\[
f(\nu_i) = \nu_{i+k}
\]
for all $i$; where by a multi-splitting we mean a collection of simultaneous train track splits along one or more large branches.  In particular, each $\nu_i$ is $f$-invariant and the $\H_f(\nu_i)$ tend to $\pm \infty$ as $i \to \pm \infty$.  Also, each $\nu_i$ fully carries the unstable foliation for $f$, and the set of large branches split at each step is precisely the set of branches of largest weight with respect to the unstable foliation.  The train track $\nu$ in the conclusion of \Cref{prop:squeeze} will be chosen from among the $\nu_i$ in Agol's sequence.

Agol proves that if $\tau$ is a train track carrying the horizontal foliation for $f$, then $\tau$ splits to some $\nu_i$ in the sequence \cite[Theorem 3.5]{Agol11}.  The basic idea of the proof of \Cref{prop:squeeze} is to use this fact to find a $\nu_i$ that all the $\tau_i$ split to, and then to show that we can move backwards along the sequence of $\nu_i$ until the slope condition is also satisfied.  

We divide the proof into two steps.  In Section~\ref{sec:backfolding} we prove \Cref{lem:baby}, the backfolding lemma, which gives conditions under which the fold of a train track carried by a train track $\tau$ is again carried by $\tau$; this is the main technical lemma of the section.  Then in Section~\ref{sec:agol} we focus specifically on the Agol train tracks described above, and use them to prove \Cref{prop:squeeze}.  


\subsection{The backfolding lemma}
\label{sec:backfolding}

Before proceeding to the statement and the proof of the backfolding lemma, we introduce the following criterion for when a single fold does not does not disturb the carrying relation.  In the statement, we say that a shadow $\Lambda_b$ is contained in a shadow $\Lambda_a$ by a component-respecting inclusion if each component of $\Lambda_a$ contains one component of $\Lambda_b$.  

\begin{lemma}\label{lem:fold_to_carry}
Let $S$ be a hyperbolic surface and let $\sigma$ and $\tau$ be transversely recurrent train tracks in $S$ with $\sigma \prec \tau$.  Let $\sigma \fold \sigma'$ be a single fold creating a new large branch $b'$.  Let $\wt \sigma'$ and $\wt \tau$ be the preimages of $\sigma'$ and $\tau$ in $\HH^2$, and let $\wt b' \subseteq \wt \sigma'$ be a lift of $b'$ to $\HH^2$. Suppose there is a branch $\tilde a$ of $\wt \tau$ with
\[
\Lambda_{\wt b'} \subseteq \Lambda_{\wt a}
\]
with a component-respecting inclusion.  Then $\sigma'$ is carried by $\tau$.
\end{lemma}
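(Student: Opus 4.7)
The plan is to construct an explicit carrying map $\phi' : \sigma' \to \tau$ witnessing $\sigma' \carried \tau$ by modifying the carrying map $\phi : \sigma \to \tau$ provided by the hypothesis $\sigma \carried \tau$.  The fold $\sigma \fold \sigma'$ identifies initial segments of two branches $b_1, b_2$ of $\sigma$ meeting tangentially at a switch $s$ to form the new large branch $b'$ of $\sigma'$; outside these identified segments, $\sigma$ and $\sigma'$ coincide.  The question thus reduces to whether $\phi$ factors through the fold map $\sigma \to \sigma'$, i.e., whether $\phi(b_1)$ and $\phi(b_2)$ agree as train paths in $\tau$ along the identified segment corresponding to $b'$.

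Passing to the universal cover and fixing compatible lifts $\wt b_1, \wt b_2$ sharing a lifted switch $\wt s$ and folding to the given $\wt b'$, we see that $\wt \phi(\wt b_1)$ and $\wt \phi(\wt b_2)$ are train paths in $\wt \tau$ starting at $\wt \phi(\wt s)$ and tangent to one another there, so they initially coincide along at least one branch.  I would use the shadow hypothesis and Lemma~\ref{lemma:carrying-points} to show that they in fact agree along the full length of $\wt b'$.  Given any bi-infinite train path $\gamma$ in $\wt \sigma'$ through $\wt b'$, its two endpoints at infinity lie in the distinct components of $\Lambda_{\wt b'}$; by the component-respecting inclusion, they therefore lie in distinct components of $\Lambda_{\wt a}$.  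The two rays of $\gamma$ extending outside $\wt b'$ lie in $\wt \sigma$ (since the fold only affects a neighborhood of $\wt b'$), so their $\wt \phi$-images are rays in $\wt \tau$ terminating at these same endpoints at infinity.  Joining these two rays across $\wt b'$ by either $\wt \phi(\wt b_1)$ or $\wt \phi(\wt b_2)$ gives a bi-infinite train path in $\wt \tau$ which, by Lemma~\ref{lemma:carrying-points}, must traverse $\wt a$.

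If $\wt \phi(\wt b_1)$ and $\wt \phi(\wt b_2)$ diverged anywhere along $\wt b'$, these two joinings would yield two distinct bi-infinite train paths in $\wt \tau$ sharing both endpoints at infinity, hence bounding a generalized bigon in $\wt \tau$.  This is ruled out by transverse recurrence of $\tau$, as recalled in Section~\ref{sec:forcing_prelim}.  Thus $\wt \phi(\wt b_1)$ and $\wt \phi(\wt b_2)$ coincide along $\wt b'$, so $\wt \phi$ descends through the fold map to a deck-equivariant map $\wt \phi' : \wt \sigma' \to \wt \tau$; this descends further to the desired carrying map $\phi' : \sigma' \to \tau$, establishing $\sigma' \carried \tau$.

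The main obstacle is the divergence-to-bigon step.  One must verify that a genuine divergence of $\wt \phi(\wt b_1)$ and $\wt \phi(\wt b_2)$ within $\wt b'$ really does close up into a bigon, rather than producing two train paths that split apart and never rejoin.  The role of $\wt a$ is crucial here: the shadow hypothesis together with Lemma~\ref{lemma:carrying-points} pins the middle portion of each completed bi-infinite train path in $\wt \tau$ to the single branch $\wt a$, forcing the two completions to recombine at $\wt a$ and therefore close into the required bigon with the initial divergence.
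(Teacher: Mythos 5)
Your argument rests on a model of the fold that is not the one the lemma concerns, and the step you yourself flag as the main obstacle is exactly where the proof breaks. A train-track fold (the reverse of a split) does not identify initial segments of two branches $b_1,b_2$ at a common switch; it combines three branches $e$, $f$, $g$ of $\sigma$, with $f$ the small branch, into the single large branch $b'$, and the obstruction to the carrying map $\sigma\prec\tau$ factoring through $\sigma\fold\sigma'$ lives at the two intermediate switches of $e\cdot f\cdot g$, not in a disagreement of two branch-images emanating from one point. The paper's proof begins by isolating, via a tie neighborhood $N$ of $\tau$, the unique configuration in which factoring fails: no tie of $N$ meets all three of $e$, $f$, $g$, so $f$ maps homeomorphically onto a branch $h$ of $\tau$ while $e$ and $g$ peel off from the two ends of $h$ in incompatible directions. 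Your reduction never reaches this configuration, so the remainder of the argument is aimed at the wrong target.

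Even granting your reduction, the divergence-to-bigon step fails as written. The branches $\wt b_1$ and $\wt b_2$ terminate at different switches of $\wt\sigma$, so $\wt\phi(\wt b_1)$ and $\wt\phi(\wt b_2)$ end at different points of $\wt\tau$ and cannot both be concatenated with the same terminal ray; the two completions you would need therefore have distinct forward endpoints at infinity and do not bound a generalized bigon in the sense used in Section~\ref{sec:forcing_prelim}. The proposed patch --- that both completions traverse $\wt a$ and hence ``recombine'' --- would require you to exhibit two legal bi-infinite train paths of $\wt\tau$ sharing an initial ray, each with endpoints in the two components of $\Lambda_{\wt a}$, and then to prove that two such paths which share an initial segment and both traverse $\wt a$ bound an honest (forbidden) bigon; neither step is supplied, and the second is not automatic. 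The paper's mechanism is different in kind: it notes that $\Lambda_{\wt e}\cup\Lambda_{\wt f}\cup\Lambda_{\wt g}\subseteq\Lambda_{\wt b'}\subseteq\Lambda_{\wt a}$ with component-respecting inclusions, applies Lemma~\ref{lemma:carrying-points} to the path $\gamma_{\wt f}^L$ to produce a forward train path in $\wt\tau$ from $\wt h$ to $\wt a$, and then combines the monotonicity of shadows along train paths with the observation that $\Lambda_{\wt h}$ contains neither $\Lambda_{\wt e}$ nor $\Lambda_{\wt g}$ to contradict the containment. No bigon appears anywhere in that argument.
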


\begin{proof}

Let $e$, $f$, and $g$ be the branches that are folded together in $\sigma$ to produce the branch $b'$ of $\sigma'$, where $f$ is the small branch.  There are two cases: we can either choose the carrying map $\sigma \to \tau$ so that a small neighborhood of $f$ in $\sigma$ maps homeomorphically or we cannot (here we are referring only to the carrying map, not the map to a tie neighborhood).  In the latter case $\sigma \prec \tau$ factors through $\sigma \mapsto \sigma'$, contrary to assumption.  So we may assume that a small neighborhood of $f$ does map homeomorphically; say that the branch in the image is $h$. 

One way to visualize the statements in the preceding paragraph is to draw a tie neighborhood $N$ of $\tau$ and to consider $\sigma \subset N$ so that collapsing along the fibers give a carrying map $\sigma \prec \tau$.  If some train track tie of $N$ passes through all three branches $e$, $f$, and $g$, the carrying map $\sigma \prec \tau$ factors through $\sigma \mapsto \sigma'$. The only other possibility is that $e$, $f$, and $g$ are configured in $N$ as in \Cref{fig:sigma_in_N}.  

Let $\wt e$, $\wt f$, and $\wt g$ be preimages of $e$, $f$, and $g$ in $\widetilde \sigma$ that are folded to create $\wt b'$.  Because of the folding relation, we have
\[
\Lambda_{\wt e} \cup \Lambda_{\wt f} \cup \Lambda_{\wt g} \subseteq \Lambda_{\wt b'} 
\]
by component-respecting inclusions. By assumption we have an edge $\tilde a$ of $\wt \tau$ with $\Lambda_{\wt b'} \subseteq \Lambda_{\wt a}$ and hence $\Lambda_{\wt a}$ contains $\Lambda_{\wt e}$, $\Lambda_{\wt f}$, and $\Lambda_{\wt g}$.  Let $\wt h$ be the branch of $\wt \tau$ that is the homeomorphic image of $\wt f$.  

\begin{figure}[hbt!]
\labellist
\pinlabel  {$\wt e$} [ ] at 50 220
\pinlabel  {$\wt f$} [ ] at 155 205
\pinlabel  {$\wt g$} [ ] at 570 195
\endlabellist  
 \includegraphics[width=.6667\textwidth]{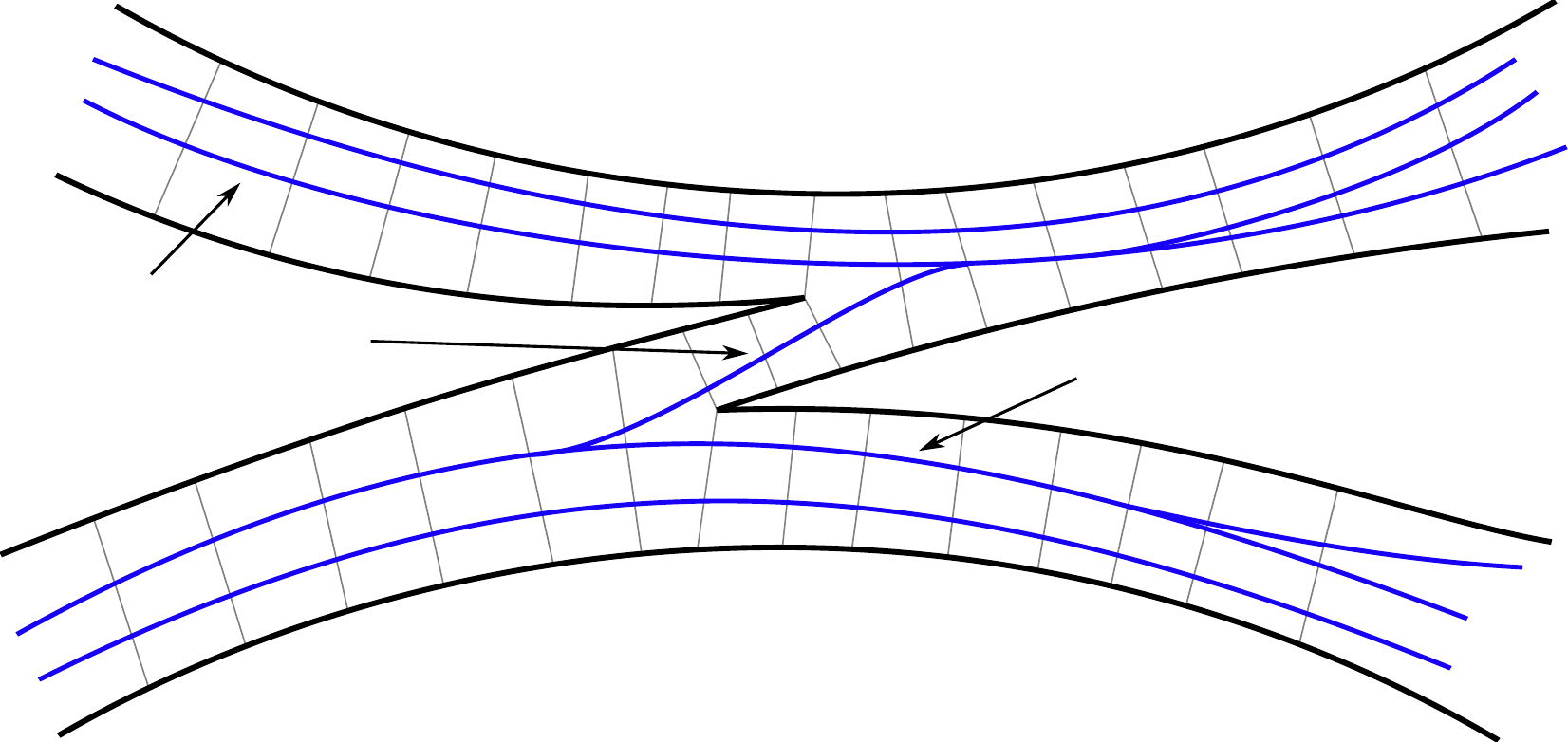}
\caption{The configuration of $\sigma \subset N$ if $\sigma \prec \tau$ does not factor through $\sigma \mapsto \sigma'$}
\label{fig:sigma_in_N}
\end{figure}

By the above (component-respecting) inclusions, the path 
\[
\gamma = \gamma_{\wt f}^L
\]
has endpoints in both components of $\Lambda_{\wt a}$.  By Lemma~\ref{lemma:carrying-points}, the image of $\gamma$ in $\wt \tau$ passes through $\wt a$.  In particular, $\wt h$ and the branch $\wt a$ are connected by a train path in $\wt \tau$ (a sub-path of $\gamma$). 

Finally, observe from \Cref{fig:sigma_in_N} that $\Lambda_{\wt h}$ contains neither $\Lambda_{\wt e}$ nor $\Lambda_{\wt g}$.
 Since moving along a train path does not increase the shadow in the forward direction, we must have that one of the inclusions $\Lambda_{\wt e} \subseteq \Lambda_{\wt a}$ or $\Lambda_{\wt g} \subseteq \Lambda_{\wt a}$ is false.  This is the desired contradiction.
\end{proof}

We require one more lemma about slopes before proceeding to the backfolding lemma.  Recall that $E=14$ and $\Qcurve(S) = 2K^2$.  

\begin{lemma}
\label{lem:foldslope}
Let $f \in \Mod(S)$ be pseudo-Anosov, let $\sigma \fold \sigma'$ be a single fold of transversely recurrent train tracks, and let $b' \subseteq \sigma'$ be the large branch created in the fold.  Then
\[
\min \H_f(\sigma) - (E+3)\Qcurve \leq \H_f(\gamma_{b'}^{L/R}) \leq \max \H_f(\sigma) + (E+3)\Qcurve.
\]
\end{lemma}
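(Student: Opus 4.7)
The plan is to apply \Cref{lemma:trackslope} to $\sigma'$ (which is transversely recurrent by hypothesis) at the branch $b'$, and then to compare the horizontality ranges of $\sigma$ and $\sigma'$. The $L$ and $R$ cases are symmetric, so I focus on $\gamma_{b'}^L$. Direct application of \Cref{lemma:trackslope} to $\sigma'$ at $b'$ yields
\[
\min \H_f(\sigma') - (E+1)\Qcurve \;\leq\; \H_f(\gamma_{b'}^L) \;\leq\; \max \H_f(\sigma') + (E+1)\Qcurve,
\]
using $\Qtrack + \Qcurve = (E+1)\Qcurve$. The lemma will then follow once I establish that
\[
\min \H_f(\sigma) - 2\Qcurve \;\leq\; \min \H_f(\sigma') \quad\text{and}\quad \max \H_f(\sigma') \;\leq\; \max \H_f(\sigma) + 2\Qcurve,
\]
for the combined slack is exactly $(E+1)\Qcurve + 2\Qcurve = (E+3)\Qcurve$.

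The main geometric content is the second display above. To establish it, it suffices to show that every vertex cycle of $\sigma'$ lies at distance at most $2$ from some vertex cycle of $\sigma$ in $\C(S)$, since each unit of curve graph distance corresponds to at most $\Qcurve$ of horizontality deviation via \Cref{prop:boundedslope}. Every vertex cycle of a transversely recurrent track has the form $\beta_b^{L/R}$ for some branch $b$. For a branch $b$ of $\sigma'$ other than $b'$, the branch $b$ belongs to $\sigma$ as well, and the left-/right-turning rules in $\sigma$ and $\sigma'$ agree at every switch outside the fold region; hence $\beta_b^{L/R}$ in $\sigma'$ is (generically identical to, and in all cases close to) $\beta_b^{L/R}$ in $\sigma$. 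For $b = b'$, the cycle $\beta_{b'}^L$ unfolds to a simple closed curve in $S$ that passes through the concatenation $efg$; this curve is at $\C(S)$-distance at most $2$ from the vertex cycle $\beta_f^L$ of $\sigma$, with one potential hop contributed by the interior fold switch at each end of $f$.

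The hard part will be the combinatorial case analysis of cycles through the fold region. In the generic situation, where the interior switches of the fold are degree-$2$, the left-turning rule in $\sigma$ automatically carries $\beta_f^L$ through $e$ and $g$, making it match the unfolded $\beta_{b'}^L$ exactly and giving zero distance. In non-generic configurations (additional branches adjacent to $f$ on either side), one must track how the left-turning choice at the interior switches of the fold can diverge from the path taken by the unfolded $\beta_{b'}^L$, and use the shadow machinery from \Cref{sec:forcing_shadows}, particularly the shadow inclusions established during the proof of \Cref{lem:fold_to_carry}, to exhibit explicit intermediate curves disjoint from both endpoints of the comparison. Finally, the same chain of comparisons works verbatim for $\gamma_{b'}^R$ by replacing ``left'' with ``right'' throughout.
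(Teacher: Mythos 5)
Your reduction to \Cref{lemma:trackslope} applied to $\sigma'$ is fine, and the arithmetic $(E+1)\Qcurve + 2\Qcurve = (E+3)\Qcurve$ is correct, but the step carrying all the content --- that every vertex cycle of $\sigma'$ lies within distance $2$ in $\C(S)$ of some vertex cycle of $\sigma$ --- is not established, and your route to it has two genuine gaps. First, you assert that every vertex cycle of a transversely recurrent track has the form $\beta_b^{L/R}$; the paper uses (and the cited literature supplies) only the converse, namely that every $\beta_b^{L/R}$ is a vertex cycle. A vertex cycle is just a vertex of the polytope $\P(V(\sigma'))$ and may turn left at one switch and right at another, so your enumeration need not exhaust $\H_f(\sigma')$. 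Second, even for turning cycles, the comparison across the fold is precisely the work you defer as ``the hard part'': a cycle $\beta_b^{L}$ based at a branch far from the fold can still enter the fold region, where the switches of $\sigma$ and $\sigma'$ differ, and be rerouted onto different branches; and for $\beta_{b'}^{L}$ versus $\beta_f^{L}$ you never produce the intermediate curve that would certify distance $2$. As written, the argument proves the lemma only modulo a quantitative vertex-cycle-stability statement for folds that is itself of comparable difficulty to the lemma.

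The paper's proof avoids all of this with one observation you are missing: if $b$ is the (small) branch of $\sigma$ along which the fold is performed, then, depending on the orientation of the fold, one of $\gamma_{b'}^{L}$, $\gamma_{b'}^{R}$ is homotopic (indeed, after an ambient isotopy, equal) to the corresponding path $\gamma_b^{L}$ or $\gamma_b^{R}$ of $\sigma$ --- folding does not change the extremal turn on that side. Since that path is disjoint from the vertex cycle $\beta_b^{L/R}$ of $\sigma$, its horizontality is controlled by $\H_f(\sigma)$ with slack only $\Qcurve$. The other turning path through $b'$ is then compared to the first one entirely inside $\sigma'$, using that $\beta_{b'}^{L}$ and $\beta_{b'}^{R}$ are vertex cycles of $\sigma'$ at distance at most $E$ in $\C(S)$, which costs the remaining $(E+2)\Qcurve$. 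No comparison between the vertex cycle sets of $\sigma$ and $\sigma'$ is ever required.
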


\begin{proof}

By symmetry, it suffices to prove the first inequality.  Let $b$ be the branch of $\sigma$ used to do the fold.  Depending upon the orientation of the fold, it is either the case that $\gamma_{b'}^L$ and $\gamma_b^L$ are homotopic or $\gamma_{b'}^R$ and $\gamma_b^R$ are (up to modifying the fold itself by an ambient isotopy, we can in fact make the two paths equal).  Say it is the former that are homotopic. As in the proof of Lemma~\ref{lemma:trackslope}, we have
\[
\H_f(\gamma_{b'}^L) = \H_f(\gamma_b^L) \geq \min \H_f(\sigma) - \Qcurve.
\]
Using the same reasoning and applying Lemma~\ref{lemma:trackslope}, we have that
\[
\H_f(\gamma_{b'}^R) \geq \H_f(\gamma_{b'}^L) - (E+2)\Qcurve \geq \min \H_f(\sigma) - (E+2)\Qcurve.
\]
Combining these gives the lemma.
\end{proof}

We now arrive at the backfolding lemma.

\begin{lemma}\label{lem:baby}
Let $f \in \Mod(S)$ be pseudo-Anosov with singular Euclidean surface $X$ and horizontal and vertical foliations $\F_h$ and $\F_v$.  Let $\tau$ be a transversely recurrent train track in $S$ that fully carries $\F_h$, and let $\sigma$ be a train track in $S$ with $\sigma \prec \tau$ and
\[
\H_f(\sigma) \geq \H_f(\tau) + \Qfit
\]
Further suppose that $\sigma \fold \sigma'$ is a single fold producing a large branch $b' \subseteq \sigma'$ and that $b'$ obstructs $\F_v$. Then $\sigma'$ is carried by $\tau$.
\end{lemma}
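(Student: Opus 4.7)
The plan is to apply \Cref{lem:fold_to_carry}, which reduces the task to producing, for one (hence every) lift $\wt b'$ of $b'$ to $\HH^2$, a branch $\tilde a$ of $\wt \tau$ with $\Lambda_{\wt b'} \subseteq \Lambda_{\wt a}$ via a component-respecting inclusion. First I would establish a horizontality gap for the turning paths through $b'$: since $\sigma \fold \sigma'$ is a single fold creating $b'$, \Cref{lem:foldslope} combined with the hypothesis $\H_f(\sigma) \geq \H_f(\tau) + \Qfit$ and the identity $\Qfit = \Qforce + (E+3)\Qcurve$ yields
\[
\H_f(\gamma_{b'}^L),\ \H_f(\gamma_{b'}^R) \;\geq\; \max \H_f(\tau) + \Qforce.
\]
By \Cref{lemma:trackslope}, this places the slopes of $\gamma_{\wt b'}^L$ and $\gamma_{\wt b'}^R$ well below those of every turning path attached to a branch of $\wt \tau$.

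Next I would exploit the hypothesis that $b'$ obstructs $\F_v$ to set up an approximating rectangle. Fix a singularity-free leaf $\tilde \ell_v$ of $\wt \F_v$ whose endpoints at infinity lie in different components of $\Lambda_{\wt b'}$. The geodesic representatives of $\gamma_{\wt b'}^L$ and $\gamma_{\wt b'}^R$ share the branch $\wt b'$ and so must cross in $\wh X$, so I may pick a singularity-free horizontal leaf $\tilde \ell_h$ of $\wt \F_h$ passing through a neighborhood of their crossing and transverse to both. Applying \Cref{lem:covering_rectangles}(\ref{cr1}) to the triple $(\gamma_{\wt b'}^L, \gamma_{\wt b'}^R, \tilde \ell_h)$ produces an $F^5$-approximating rectangle $R \subseteq \wh X$ whose horizontal edges are crossed by $\gamma_{\wt b'}^L$ and $\gamma_{\wt b'}^R$, with $\tilde \ell_v$ crossing both of these paths inside~$R$.

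Because $\F_h$ is fully carried by $\tau$, the horizontal leaf $\tilde \ell_h$ corresponds to a bi-infinite train path in $\wt \tau$ with the same endpoints at infinity; I would take $\tilde a$ to be a branch of this train path whose tie neighborhood meets~$R$. The heart of the proof is then to verify that $\Lambda_{\wt b'} \subseteq \Lambda_{\wt a}$ as a component-respecting inclusion. My plan is to use the horizontality gap of the first step together with \Cref{lem:covering_rectangles}(\ref{cr2}): suitable small-slope saddle connections supplied by the carrying $\F_h \prec \tau$ in a neighborhood of $\tilde a$ have lifts that are forced to cross the vertical edges of $R$, and combining this with the slope control on $\gamma_{\wt a}^L$ and $\gamma_{\wt a}^R$ from \Cref{lemma:trackslope} forces the four endpoints at infinity of the $\wt a$-paths to nest outside those of the $\wt b'$-paths in the correct cyclic order on $\partial \HH^2$. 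This cyclic-order statement is precisely the desired containment, and \Cref{lem:fold_to_carry} then delivers $\sigma' \prec \tau$.

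The principal obstacle is this final step: translating the metric/slope comparisons into the topological containment of shadows on $\partial \HH^2$. This is also where the particular numerical value $\Qfit = \Qforce + (E+3)\Qcurve$ is used, since any slack smaller than $(E+3)\Qcurve$ would be consumed by the slope fluctuations permitted by \Cref{lem:foldslope} and \Cref{lemma:trackslope}.
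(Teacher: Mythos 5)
Your opening moves match the paper's: reduce to \Cref{lem:fold_to_carry} and use \Cref{lem:foldslope} with $\Qfit = \Qforce + (E+3)\Qcurve$ to get $\H_f(\gamma_{b'}^{L/R}) \geq \H_f(\tau) + \Qforce$. But the core of the argument --- actually producing a branch $\wt a$ with $\Lambda_{\wt b'} \subseteq \Lambda_{\wt a}$ --- is missing, and your proposed candidate for $\wt a$ does not come with the property that makes the containment tractable. The paper's mechanism is: the horizontality gap plus \Cref{prop:forcing} (with $\tau$ fully carrying $\F_h$) shows that $\gamma_{b'}^L$ and $\gamma_{b'}^R$ are themselves \emph{carried by} $\tau$; since their lifts are linked at infinity, their carried train paths in $\wt\tau$ share a branch $\wt a$, and because both turning paths through $\wt b'$ pass through $\wt a$, one gets $\partial \Lambda_{\wt b'} \subseteq \Lambda_{\wt a}$ immediately. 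This leaves only two possibilities: the desired $\Lambda_{\wt b'} \subseteq \Lambda_{\wt a}$, or the bad alternative $\partial\Lambda_{\wt a} \subseteq \Lambda_{\wt b'}$. Your $\wt a$ --- a branch on the train path of $\wt\tau$ carrying a horizontal leaf near the crossing --- has no evident relation to the endpoints of $\gamma_{\wt b'}^{L/R}$, so you have no foothold for the containment, which is exactly the step you flag as the "principal obstacle."

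The hypothesis that $b'$ obstructs $\F_v$ is also deployed differently, and your version loses its force. In the paper it is used precisely to kill the bad alternative: a singularity-free \emph{vertical} leaf separating the components of $\Lambda_{\wt b'}$ feeds into the vertical version of \Cref{lem:covering_rectangles}, and the slope estimate $\max\slope(\gamma_{\wt a}^L) \geq F^{2K+6}\slope(\gamma_{b'}^{L/R})$ (derived from \Cref{prop:boundedslope}, \Cref{lem:foldslope}, and Penner's inequality) forces a translate of $\gamma_{\wt a}^L$ to separate the components of $\Lambda_{\wt b'}$; if $\partial\Lambda_{\wt a} \subseteq \Lambda_{\wt b'}$, this translate would have to cross $\gamma_{\wt a}^L$, contradicting simplicity of left-turning paths. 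You instead apply the rectangle lemma to a horizontal leaf and never convert the slope data into the cyclic-order statement. Finally, you omit the complete-versus-general case division: \Cref{prop:forcing} only yields carrying by a \emph{diagonal extension} of $\tau$, so the paper first assumes $\tau$ complete (no proper extensions) and then reduces the general case to it by observing that a diagonal branch is never the extreme turn, so $\Lambda_{\wt a}$ is unchanged. Without these three ingredients the proof does not close.
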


\begin{proof}

Our goal is apply Lemma~\ref{lem:fold_to_carry}.  As such, our main work is to verify the hypotheses of that lemma, with $\tau$, $\sigma$, $\sigma'$, and $b'$ in this lemma corresponding to the objects with the same names in the former.  For starters, since $f$ is pseudo-Anosov, we have that $S$ is hyperbolic, as required by Lemma~\ref{lem:fold_to_carry}.  

Let $\wt b'$ be a lift of $b'$ to $\HH^2$.  The only hypothesis of Lemma~\ref{lem:fold_to_carry} that it remains to verify is that there is a branch $\wt a$ in $\wt \tau$, the preimage of $\tau$ in $\HH^2$, with
\[
\Lambda_{\wt b'} \subseteq \Lambda_{\wt a}.
\]
We first address the case where $\tau$ is complete, meaning that $\tau$ has no proper extensions, and then we use this special case to derive the general case.  

\p{Complete case} We first claim that $\gamma_{b'}^L$ and $\gamma_{b'}^R$ are carried by $\tau$.  We will prove this using \Cref{prop:forcing}, and so we would like to bound the horizontalities of both paths.  First, the assumption $\H_f(\sigma) \geq \H_f(\tau) + \Qfit$ can be rephrased as $\min \H_f(\sigma) \geq \H_f(\tau) + \Qfit$.  Recall that $\Qfit = \Qforce + (E+3)\Qcurve$.  Using Lemma~\ref{lem:foldslope} and the rephrasing of the assumption we have
\begin{align*}
\H_f(\gamma_{b'}^{L/R}) &\geq \min \H_f(\sigma) - (E+3)\Qcurve \\
&= \min \H_f(\sigma) - (\Qfit - \Qforce) \\
&\geq \H_f(\tau) + \Qforce.
\end{align*}
By \Cref{prop:forcing}, the assumption that $\tau$ fully carries $\F_h$ and the assumption that $\tau$ is complete, the claim now follows.  

By the previous claim the paths $\gamma_{\wt b'}^L$ and $\gamma_{\wt b'}^R$ in $\HH^2$ are carried by $\wt \tau$.  Also, these two paths are linked at infinity in $\HH^2$.  Thus, there is a branch $\wt a$ in $\wt \tau$ contained in both paths.  We will show that this $\wt a$ satisfies the desired containment $\Lambda_{\wt b'} \subseteq \Lambda_{\wt a}$.  

Since $\gamma_{\wt b'}^L$ and $\gamma_{\wt b'}^R$ both pass through $\wt a$, we have
\[
\partial \Lambda_{\wt b'} \subseteq \Lambda_{\wt a}.
\]
From this containment, there are two possibilities for $\Lambda_{\wt b'}$: either $\Lambda_{\wt b'} \subseteq \Lambda_{\wt a}$ (which is what we want to show) or $\partial \Lambda_{\wt a} \subseteq \Lambda_{\wt b'}$.  As such, it remains to rule out the latter.

By assumption there is a leaf $\ell$ of $\F_v$ that separates the components of $\Lambda_{\wt b'}$. Any such $\ell$ must cross both $\gamma_{\wt b'}^L$ and $\gamma_{\wt b'}^R$.  By (the unstated vertical version of) \Cref{lem:covering_rectangles} there is a maximal rectangle $R$ so that  $\ell$ meets $R$ in a segment, so that the geodesic representative of $\gamma_{\wt b'}^L$ in $\wh X$ crosses the vertical edges of $R$, and so that every geodesic $\gamma$ in $X$ with
\[
\max \slope(\gamma) > F^{2K+6} \slope(\gamma_{\wt b'}^i) \text{ for } i \in \{L,R\}
\]
there is a component $\wt \gamma$ of the preimage of $\gamma$ the crosses the horizontal edges of $R$.  Here $F = 2 \flex(X)-1$, as usual.

We next claim that $\gamma_{\wt a}^L$ satisfies the criterion
\[
\max \slope(\gamma_{\wt a}^L) > F^{2K+6} \slope(\gamma_{\wt b'}^i) \text{ for } i \in \{L,R\}.
\]
First, by \Cref{prop:boundedslope}, the assumption on $\H_f(\sigma)$, \Cref{lem:foldslope}, the definition of $E$, and the fact that $\Qfit \geq 0$, we have
\begin{align*}
\min \H_f(\gamma_{\wt a}^L) &\leq \max \H_f(\tau) + \Qcurve \\
&\leq \min \H_f(\sigma) + \Qcurve -  \Qfit \\ 
&\leq \H_f(\gamma_{b'}^{L/R})  + (E+4)\Qcurve -  \Qfit  \\
&= \H_f(\gamma_{b'}^{L/R})  + 18\,\Qcurve - \Qfit \\
& \leq \H_f(\gamma_{b'}^{L/R})  + 18\,\Qcurve.
\end{align*}
We convert to slopes and use Penner's inequality $\lambda^{2K} > F$ (Lemma~\ref{lem:penner}):
\begin{align*}
\max \slope(\gamma_{\wt a}^L) &\geq (\lambda^{2K})^{18K} \slope(\gamma_{b'}^{L/R}) \\
&\geq F^{18K} \slope(\gamma_{b'}^{L/R}). 
\end{align*}
Since $18K > 2K+6$, the claim follows.  (This argument does not use $\H_f(\sigma) \geq \H_f(\tau) + \Qfit$, but the weaker assumption that $\H_f(\sigma) \geq \H_f(\tau) + (E+2)\Qcurve - 12K$.)

Let us replace $\gamma_{\wt a}^L$, $\gamma_{\wt b}^L$, and $\gamma_{\wt b}$ with their geodesic representatives in $\wh X$.  By the previous claim, the description of $R$ from \Cref{lem:covering_rectangles}, and the CAT(0) geometry of $\wh X$ (specifically that geodesics in $\wh X$ intersect in a connected set), there is a translate $\gamma'$ of $\gamma = \gamma_{\wt a}^L$ that crosses both $\gamma_{\wt b}^L$ and $\gamma_{\wt b}^R$ each in exactly one point.  It follows that $\gamma'$ also separates the components of $\Lambda_{\wt b'}$. However, under our assumption $\gamma$ has its boundary in $\Lambda_{\wt b'}$ and so $\gamma$ and $\gamma'$ must cross. This contradicts the fact that left turning paths are simple and we are done.

\p{General case} Suppose that $\sigma' \split \sigma$ is a left split, so that $\gamma_b^L$ is automatically carried by $\tau$. Now by \Cref{prop:forcing} and the assumption that $\tau$ fully carries $\F_h$, we have that $\gamma_{\wt b}^R$ is carried by $\wt \tau'$, where $\tau'$ is some diagonal extension of $\tau$. Let $\wt a$ be a common branch of the train paths $\gamma_{\wt b}^{L/R}$ as carried by $\wt \tau'$. As $\gamma_{\wt b}^L \prec \wt \tau$, we must have that $\wt a$ is contained in $\wt \tau$. Note that since a diagonal of a diagonal extension is never the left/right-most turn, we have that $\Lambda_{\wt a}$ is independent of whether $\wt a$ is considered as a branch of $\wt \tau$ or $\wt \tau'$. By the special case of the lemma, we have $\Lambda_{\wt b} \subseteq \Lambda_{\wt a}$ and we are done.
\end{proof}


\subsection{Proof of the fitting lemma}
\label{sec:agol}

Before proving \Cref{prop:squeeze} we have a technical lemma about Agol's train tracks, Lemma~\ref{lem:veering_dual}.  This lemma will use a fact from the work of Gu\'eritaud, Minsky, and the third-named author, which we now explain.  

\p{Gu\'eritaud and transversality} Using the work of Gu\'eritaud, Minsky and the third author observed that each Agol train track $\nu_i$ can be realized as a smoothing of the dual graph to a triangulation of the surface by rectangle-spanning saddle connections.  Moreover, the large half-branch in each triangle meets the tallest side.  See Figure~\ref{fig:agol} for an illustration of the local picture in one such triangle.  It follows that $\nu_i$ can be chosen to be transverse to the vertical foliation $\F_v$ for the corresponding pseudo-Anosov mapping class.  

\begin{figure}[htbp]
\begin{center}
\includegraphics[width = .2 \textwidth]{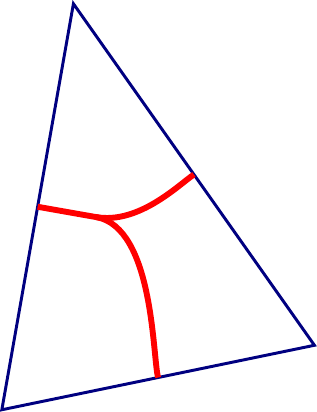}
\caption{A piece of an Agol train track inside a triangle of a triangulation by rectangle-spanning saddle connections}
\label{fig:agol}
\end{center}
\end{figure}

\begin{lemma}\label{lem:veering_dual}
Let $f \in \Mod(S)$ be pseudo-Anosov with vertical foliation $\F_v$, and let $\nu$ be an Agol train track for $f$.  Then 
every branch of $\nu$ obstructs $\F_v$. 
\end{lemma}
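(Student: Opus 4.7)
The plan is to use directly the Agol--Gu\'eritaud realization recorded just before the lemma, which allows us to take $\nu$ globally transverse to $\F_v$.  Fix a branch $b$ of $\nu$ and, since $\F_v$ is a foliation near every interior point of $b$, pick a singularity-free leaf $\ell$ of $\F_v$ crossing $b$ in a single point.  Lifting to $\HH^2$ gives a leaf $\wt\ell \subset \widetilde{\F_v}$ crossing a chosen lift $\wt b \subset \wt\nu$ of $b$, and it will suffice to show that the two endpoints of $\wt\ell$ in $\partial\HH^2$ separate the two components of the shadow $\Lambda_{\wt b}$.

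The crux of the argument is that since $\wt\nu$ is transverse to $\widetilde{\F_v}$, every train path in $\wt\nu$ is transverse to $\widetilde{\F_v}$, and consequently meets any leaf of $\widetilde{\F_v}$ in at most one point (the transverse measure $\mu_v$ is strictly monotone along any such train path).  Applying this to $\gamma_{\wt b}^L$ and $\gamma_{\wt b}^R$, each of which contains $\wt b$, we conclude that both paths meet $\wt\ell$ only at the single point $\wt b \cap \wt\ell$.  Hence $\wt\ell$ cuts each of these bi-infinite paths into two rays in opposite components of $\HH^2 \setminus \wt\ell$; moreover the two rays extending $\wt b$ in its forward direction land in the same forward half-plane, while the two rays extending $\wt b$ backward land in the opposite half-plane.

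Label the forward- and backward-endpoints at infinity of $\gamma_{\wt b}^L$ and $\gamma_{\wt b}^R$ as $a_L, c_L$ and $a_R, c_R$, respectively.  The previous step shows that the two endpoints of $\wt\ell$ separate $\{a_L, a_R\}$ from $\{c_L, c_R\}$ on $\partial\HH^2$.  Since $\gamma_{\wt b}^L$ and $\gamma_{\wt b}^R$ diverge at the switches of $\wt b$ in opposite left/right directions, these four points appear in the alternating cyclic order $a_L, c_R, c_L, a_R$; by the definition of the shadow, the two components of $\Lambda_{\wt b}$ are then the short arcs $(a_R, a_L)_{\text{ccw}}$ and $(c_R, c_L)_{\text{ccw}}$, with complementary arcs $(a_L, c_R)_{\text{ccw}}$ and $(c_L, a_R)_{\text{ccw}}$.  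The earlier step asserts precisely that the two endpoints of $\wt\ell$ lie one in each complementary arc, which is the definition of $\wt\ell$ separating the components of $\Lambda_{\wt b}$.  Hence $b$ obstructs $\F_v$, as claimed.

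The only delicate point I anticipate is the bookkeeping needed to verify the alternating cyclic order of the four endpoints, which reduces to the standard fact that left- and right-turning extensions of an oriented branch peel off in opposite directions at each switch of $\wt b$; once this is in place, the lemma is a short deduction from the definitions.
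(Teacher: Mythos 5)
Your overall strategy is the same as the paper's: use the Gu\'eritaud realization to make $\nu$ transverse to $\F_v$, show that each of $\gamma_{\wt b}^L$ and $\gamma_{\wt b}^R$ meets the lifted leaf $\wt\ell$ exactly once (at the point of $\wt b$), and conclude that the endpoints of $\wt\ell$ separate the two components of $\Lambda_{\wt b}$. The endgame (the cyclic-order bookkeeping) is fine, if more elaborate than necessary.

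The gap is at what you yourself call the crux: the assertion that a train path transverse to $\wt{\F_v}$ meets each nonsingular leaf at most once ``because the transverse measure $\mu_v$ is strictly monotone along any such train path.'' The transverse measure is unsigned, so monotonicity of the accumulated measure along the path says nothing by itself; to run your argument you need a \emph{signed} height function that is both strictly monotone along the path and constant on leaves. Such a function requires a globally consistent coorientation of $\F_v$ (equivalently, writing $\mu_v=|\omega|$ for a closed $1$-form and invoking path-independence of $\int\omega$), and this is exactly what fails in general: $\F_v$ has odd-pronged singularities, near which it is non-orientable, so the signed transverse measure is not well defined as a function on $\wt X$, and a priori a transversal could return to the same leaf by looping around singularities. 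Ruling this out is the real content of the step, and it is where the paper spends essentially its entire proof: if a train path met a leaf twice, an innermost such configuration would bound a disk $D$ foliated by $\wt{\F_v}$ with boundary a transversal arc plus a leaf segment; doubling $D$ across the leaf segment and then across its boundary produces a foliated sphere all of whose singularities have negative index, contradicting Gauss--Bonnet. You need to either supply this index argument or restrict to a setting (orientable $\F_v$, or a disk containing no singularities) where your monotonicity claim can be made rigorous; as written, the parenthetical justification assumes what must be proved.
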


\begin{proof}

Fix a singular Euclidean structure on $S$ associated to $f$, and let $\wt \nu$ be the preimage of $\nu$ in the corresponding completed universal cover $\wh X$.  Let $\wt \F_v$ be the induced vertical foliation of $\wh X$.  As above, we may assume that $\wt \nu$ is transverse to $\wt \F_v$.

We claim that a train path in $\wt \nu$ meets a nonsingular leaf of $\wt \F_v$ at most once.  Indeed, suppose that $\ell$ is a nonsingular leaf of $\wt \F_v$ that meets some train path of $\wt \nu$ in more than one point. There is an embedded (innermost) disk $D$ in $\HH^2$ bounded by a train path $p$ in $\wt \nu$ and a segment of $\ell$, and so that the interior of $D$ does not meet $\wt \nu$. The disk $D$ is foliated by (the restriction of) $\wt \F_v$.  If we double $D$ across the segment of $\ell$, we obtain foliated disk where the leaves are transverse to the boundary.  Doubling this resulting disk along its boundary, we obtain a foliated sphere where each singularity corresponds to a singularity in the interior of $D$.  Each such singularity has negative index, contradicting the Gauss--Bonnet formula.  The claim follows.

Now, let $b$ be a branch of $\wt \nu$, and let $\ell$ be a nonsingular leaf of $\F_v$ that passes through $b$.  By the claim, $\ell$ meets $\gamma_b^L$ and $\gamma_b^R$ exactly once each.  Thus the endpoints of $\ell$ separate the components of $\Lambda_b$ and so $b$ obstructs $\ell$, hence $\F_v$, as required.
\end{proof}

\begin{proof}[Proof of \Cref{prop:squeeze}]

As above, the $\H_f(\nu_i)$ tend to $\pm \infty$ as $i \to \pm \infty$.  Thus there is an $i \in \Z$ so that $\H_f(\nu_i) \leq M + \Qfit$ and $\H_f(\nu_{i+k}) \geq M + \Qfit$ for all $k > 0$.  To prove the proposition, it suffices to show that $\nu_i$ is carried by all elements of $T$.

So let $\tau \in T$.  As above, this means that $\tau$ is a transversely recurrent train track in $S$ that carries $\F_h$ and has $\H_f(\tau) \leq M$.  Let $\tau'$ denote $\tau[\F_h]$, the sub-train track of $\tau$ fully carrying $\F_h$.  We will show that $\nu_i \carried \tau'$.  

By the work of Agol discussed at the start of the section, $\tau'$ splits to some $\nu_j$ in the Agol sequence.  Assume that $j$ is the smallest integer with $\nu_j \carried \tau'$.  If $j \leq i$ we are done, because then $\nu_i \carried \nu_j \carried \tau'$.  So we assume $j > i$.  By our assumption in the first paragraph of the proof, $\H_f(\nu_j) \geq M + \Qfit$.  \Cref{lem:veering_dual} implies that every branch of $\nu_j$ obstructs the vertical foliation $\F_v$ for $f$.  

Lemma~\ref{lem:baby} then implies that each individual fold of the multifold $\nu_j \mapsto \nu_{j-1}$ produces a train track carried by $\tau'$.  Since these individual folds are far apart from each other in the sense that the large branches they create are disjoint, this further implies that $\nu_{j-1}$ is carried by $\tau'$, contradicting the minimality of $j$.  Thus, $j \leq i$ and so $\nu_i \carried \nu_j \carried \tau'$, as desired. 
\end{proof}


\section{Proof of Theorem~\ref{thm:main}}
\label{sec:proof}

Before proving our main technical theorem, we require several technical lemmas: a criterion for carrying, a construction of a train track neighborhood of a curve, an upper bound on the radius of a Dehn--Thurston cell structure, an upper bound for the radius of the set of vertex cycles in a Dehn--Thurston train track, and a linear algebraic lemma about projective sinks.

\p{Another carrying criterion} The following lemma gives a criterion for the extension operation to preserve the carrying relation between train tracks.  

\begin{lemma}
\label{lem:still_carry}
Suppose that $\nu \prec \tau$, where $\nu$ and $\tau$ are large train tracks in a surface. Let $\wt \nu$ be a diagonal extension of $\nu$ that fully carries a measured foliation $y$ that is also carried by $\tau$. Then a carrying map $\nu \to \tau$ extends to a carrying map $\wt \nu \to \tau$.
\end{lemma}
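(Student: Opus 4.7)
The plan is to realize everything inside a tie neighborhood of $\tau$ and to use $d$ to draw in each extra diagonal of $\wt \nu$.

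First I would fix a tie neighborhood $N(\tau)$ of $\tau$, realize the given carrying $\nu \carried \tau$ as an embedding $\iota_\nu \colon \nu \hookrightarrow N(\tau)$ transverse to the ties, and realize the carrying $d \carried \tau$ as an embedding $\iota_d \colon d \hookrightarrow N(\tau)$ transverse to the ties. Since carrying maps are well-defined up to isotopy through carryings, I can choose $\iota_d$ so that on each branch of $\nu$ traversed by $d$, the image of $d$ runs parallel to $\iota_\nu(\nu)$ along the same ties.

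To extend $\iota_\nu$ to an embedding of $\wt \nu$ into $N(\tau)$, I need to add, for each diagonal $e$ of $\wt \nu$, an arc inside $N(\tau)$ transverse to the ties, with endpoints at $\iota_\nu(\partial e)$, with tangent directions dictated by the smooth structure of $\wt \nu$, and homotopic rel endpoints in $S$ to $e$. Since $\wt \nu$ fully carries $d$, the curve $d$ traverses every diagonal; for each $e$ I would take a sub-arc $d_e \subset d$ realizing one such traversal and declare the image of $e$ to be $\iota_d(d_e)$. This arc automatically satisfies the first three requirements: it lies in $N(\tau)$ transverse to ties because $d$ does; its endpoints match $\iota_\nu(\partial e)$ by the compatibility arranged between $\iota_\nu$ and $\iota_d$; and its tangent directions match those of $\wt \nu$ at $\partial e$ because $d$ smoothly transitions from a branch of $\nu$ onto $e$ and back. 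The fourth requirement---that $\iota_d(d_e)$ is homotopic rel endpoints in $S$ to $e$---follows because $d$, being fully carried by $\wt \nu$, is homotopic in $S$ to its train-path representative in $\wt \nu$, so $d_e$ is in particular homotopic rel endpoints to $e$.

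The hard part will be verifying the tangent-direction and endpoint compatibilities rigorously. These hinge on the smoothness of $d$ inside both $\wt \nu$ and $\tau$ and on choosing the embeddings $\iota_\nu$ and $\iota_d$ to agree on the $\nu$-branches traversed by $d$. One minor subtlety is that $d$ may traverse a diagonal more than once, giving different candidate sub-arcs; but any two such sub-arcs are homotopic rel endpoints inside $S$, so any one choice suffices.
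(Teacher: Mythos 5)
There is a genuine gap at the step you yourself flag as ``the hard part,'' and it is not a technicality: the compatibility between $\iota_d$ and $\iota_\nu$ on the $\nu$-portions of $d$ is essentially equivalent to the conclusion of the lemma, so it cannot be obtained for free. The fact that the carrying $d \prec \tau$ is well defined up to isotopy through carryings tells you that $\iota_d$ is essentially unique; it does \emph{not} tell you that $\iota_d$, restricted to the sub-arcs of $d$ that run along branches of $\nu$, agrees with the composite realization obtained from $d \prec \wt\nu$ followed by $\iota_\nu$ on the $\nu$-part. A priori, the direct carrying of $d$ by $\tau$ could route those sub-arcs through $N(\tau)$ along different train paths, with the discrepancy absorbed by the portions of $d$ corresponding to the diagonals. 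Indeed, in a hypothetical configuration where a diagonal $e$ of $\wt\nu$ could \emph{not} be realized in $N(\tau)$ transverse to the ties with endpoints and tangencies at $\iota_\nu(\partial e)$, the curve $d$ would still be carried by $\tau$ --- its realization would simply position the adjacent $\nu$-portions differently from $\iota_\nu$. Your construction must rule this out, and the appeal to well-definedness of each carrying separately does not do so; once the compatibility is granted, the rest of your argument (reading off each diagonal as a sub-arc $d_e$ of $\iota_d(d)$) is circular in the sense that the granted statement already contains the conclusion.

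The way to close the gap is essentially the paper's argument. First use the standard fact that a carrying map $\nu \to \tau$ carries any diagonal extension $\wt\nu$ of $\nu$ onto a diagonal extension $\wt\tau$ of $\tau$ (diagonals of complementary polygons of $\nu$ map to concatenations of ties-transverse arcs and diagonals of complementary polygons of $\tau$); take $\wt\tau$ minimal. Then invoke the uniqueness of the train path of the \emph{closed curve} $d$ in $\wt\tau$: since $d \prec \tau \subseteq \wt\tau$, that unique train path avoids every new diagonal of $\wt\tau$; but since $d$ is \emph{fully} carried by $\wt\nu$ and $\wt\nu \prec \wt\tau$ with $\wt\tau$ minimal, the induced train path would traverse each new diagonal. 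Hence $\wt\tau = \tau$, which is exactly the compatibility you need. Note that uniqueness of train paths is only available for the closed curve $d$ in a fixed ambient track ($\wt\tau$), not for the individual $\nu$-arcs of $d$, which is why the detour through $\wt\tau$ is needed rather than arguing branch by branch as you do.
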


\begin{proof}

A carrying map from $\nu$ to $\tau$ takes $\wt \nu$ to a diagonal extension $\wt \tau$ of $\tau$ (which we assume to be a minimal such extension).  Since $y$ is carried by $\tau$ and $\wt \nu$ and since the carrying position of $y$ in $\tau$ is unique, we have $\wt \tau = \tau$, as desired. 
\end{proof}

\begin{figure}[htbp]
\begin{center}
\includegraphics[width = .5 \textwidth]{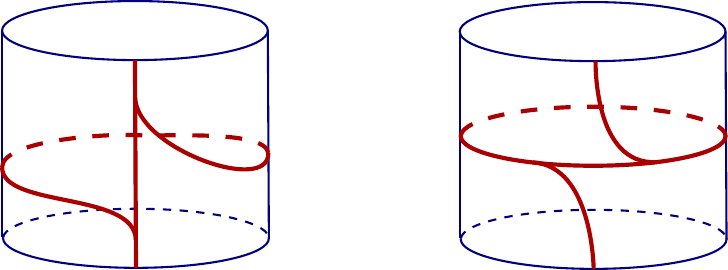}
\caption{Two model train tracks on an annulus carrying a common path connecting the boundaries}
\label{fig:dt}
\end{center}
\end{figure}

\p{A train track neighborhood} We require the following standard construction of an open neighborhood of a point in $\MF(S)$.  

\begin{lemma}
\label{lem:ttnbd}
Suppose $x \in \MF(S)$ is fully carried by a filling train track $\sigma$, which is in turn carried by a train track $\tau$. Let $\sigma_1,\dots,\sigma_k$ be the set of diagonal extensions of $\sigma$ that are carried by $\tau$. Then the union
\[
V(\sigma_1) \cup \cdots \cup V(\sigma_k)
\]
contains an open neighborhood of $x$ in $V(\tau)$.
\end{lemma}


\begin{proof}

The train track $\sigma$ can be pinched along complementary components to produce a complete track $\wt \sigma$ with $\sigma \prec \wt \sigma$ and so that each curve fully carried by $\sigma$ is fully carried by $\wt \sigma$; see \cite[Proposition 1.4.9]{PennerHarer92} where it is said that $\wt \sigma$ arises from $\sigma$ by ``trivial collapses along admissible arcs.''

Say that $\sigma_1,\dots,\sigma_\ell$ are the diagonal extensions of $\sigma$ (note $\ell \geq k$). The set $V(\wt \sigma)$ is equal to the union $V(\sigma_1) \cup \cdots \cup V(\sigma_\ell)$ (see the ``improvement'' following Theorem 5.12 in the book by Casson and Bleiler \cite{CassonBleilerNotes}). The lemma follows. (Alternatively, the construction of the pinched complete track $\wt \sigma$ above can be done within a tie neighborhood of $\tau$, yielding the same conclusion.)
\end{proof}

%
%
%
%

\p{The radius for a Dehn--Thurston cell structure} The following lemma justifies our choice of $D(S)=1$ in the introduction.  Again, see the book by Penner--Harer for background on Dehn--Thurston train tracks \cite[{\S}2.6]{PennerHarer92}.  

\begin{lemma}
\label{lem:rad}
Let $S$ be a surface, let $Y$ be a Dehn--Thurston cell structure on $\PMF(S)$, and let $c$ be a curve of the underlying pants decomposition.  Then the radius of $Y$ with respect to $c$ is 1.  
\end{lemma}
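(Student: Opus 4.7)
The plan is to prove the two inequalities $\mathrm{rad}(Y,c)\leq 1$ and $\mathrm{rad}(Y,c)\geq 1$ separately. Recall from the setup of Dehn--Thurston train tracks that a top-dimensional cell $V(\tau)$ of $Y$ is specified by a choice of annular model (of four) on each annulus and a pair-of-pants model on each complementary pair of pants. Thus $W_c$ consists precisely of those $V(\tau)$ for which the annular model on the annulus $A$ around $c$ contains a closed train path isotopic to $c$ (so that $c$ is carried by $\tau$).

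For the lower bound $\mathrm{rad}(Y,c) \geq 1$, it suffices to exhibit a single top-dimensional cell $V(\tau)$ not in $W_c$. I would use any Dehn--Thurston train track $\tau$ whose annular model on $A$ is one of the ``twisting'' models that has no closed train path isotopic to $c$; then $c\notin V(\tau)$, so $V(\tau)\notin W_c$, and the distance from this vertex to $W_c$ is at least $1$.

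For the upper bound $\mathrm{rad}(Y,c) \leq 1$, I would show that every top-dimensional cell $V(\tau)$ is adjacent in $\G_Y$ to some cell of $W_c$. Given $\tau$, form $\tau'$ by replacing only the annular model on $A$ by one that carries $c$, while leaving all other annular and pair-of-pants models unchanged; then $V(\tau') \in W_c$. The claim is that $V(\tau)\cap V(\tau') \neq \emptyset$, which means $V(\tau)$ and $V(\tau')$ meet in a common face and hence are connected by an edge of $\G_Y$. To see this, I would use the observation illustrated in Figure~\ref{fig:dt}: any two Dehn--Thurston annular models share a common subtrack of transversal arcs joining the two boundary components of $A$. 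A measured foliation whose restriction to $A$ is supported on these shared transversal arcs and whose restriction to $S\setminus A$ is any positive weighting of the common train track $\tau\cap(S\setminus A) = \tau'\cap(S\setminus A)$ is simultaneously carried by $\tau$ and $\tau'$, and so represents a point of $V(\tau)\cap V(\tau')$.

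The main obstacle is to verify that such a common measured foliation genuinely exists, i.e., that the switch conditions at $\partial A$ can be simultaneously satisfied for $\tau$ and $\tau'$ while the weight on the winding branch is set to zero. This reduces to the observation that on the pair-of-pants side of $\partial A$ the branches (and hence the switch equations they impose) are identical for $\tau$ and $\tau'$, and the transversal arcs of the two annular models meet $\partial A$ at the same locations and glue compatibly to these branches. Consequently, starting from any positive weighting of $\tau$, one may set the weight on the winding branch of the annular model to zero and obtain a point lying in the common face $V(\tau)\cap V(\tau')$, completing the proof.
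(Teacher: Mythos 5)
Your overall strategy is the paper's: for each cell $V(\tau)$ not containing $c$, swap the annular model on the annulus $A$ around $c$ for one that carries the core, and exhibit a common carried lamination so that $V(\tau)$ and the new cell intersect. But the claim you lean on to justify the swap --- that \emph{any} two Dehn--Thurston annular models share a common homotopy class of transversal arc joining the boundary components of $A$ --- is false, and it is precisely the point the paper's proof is careful about. Of the four annular models, two carry the core and two do not, and a given non-core-carrying model $\tau_A$ shares a common arc class (rel $\partial A$) with exactly \emph{one} of the two core-carrying models; in the other, the transversal winds the opposite way around the core, so no arc is carried by both. If you replace $\tau_A$ by the wrong core-carrying model, your candidate foliation is not carried by $\tau'$ and the adjacency argument breaks. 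The fix is to specify the choice as in Figure~\ref{fig:dt}: take the unique core-carrying model admitting a common carried arc class $\alpha$ with $\tau_A$.

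There is a second, smaller gap in how you produce a point of $V(\tau)\cap V(\tau')$. You propose to start from a positive weighting of $\tau$ and ``set the weight on the winding branch to zero,'' but the switch conditions are linear equations, and zeroing one coordinate of a solution does not in general yield another solution; making this work would require unwinding the annular switch conditions explicitly. The paper sidesteps all weight bookkeeping: it takes a vertex cycle of $\tau$ that passes through $A$ exactly once, notes that its intersection with $A$ is an arc in the common class $\alpha$, and concludes that this single curve is carried by both $\tau$ and $\tau'$ because the two tracks agree outside $A$. Adopting that device (or at least a concrete common carried curve rather than a generic weighting) would close this part of your argument.
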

\begin{proof}

There are two types of train tracks in the given collection of Dehn--Thurston train tracks: (1) those that contain $c_1$ as a vertex cycle and (2) those that do not carry $c_1$.  To prove the lemma we must show that each train track of the second type carries a curve in common with some train track of the first type.  

So let $\tau$ be of the second type.  This means that the model train track $\tau_A$ for $\tau$ in the annulus $A$ around $c_1$ does not carry $c_1$.  Of the two model tracks on $A$ that carry $c_1$, there is exactly one with the property that there is a homotopy class $\alpha$ of paths in $A$ (relative to $\partial A$) carried both by this model and $\tau_A$; see Figure~\ref{fig:dt}.  Let $\tau'$ be the train track obtained from $\tau$ by changing the model on $A$ to this model.   

Let $c$ be a curve in $S$ that is carried by $\tau$ and so that each component of the intersection with $A$ is parallel to $\alpha$ (relative to $\partial A$); for instance, there is a vertex cycle for $\tau$ that passes through $A$ once and satisfies this property.  Then $c$ is also carried by $\tau'$ since $\tau$ agrees with $\tau'$ outside $A$.  This completes the proof.  
\end{proof}

\p{Sinks and eigenvalues} Finally, we give a lemma that allows us to conclude the second statement of Theorem~\ref{thm:main} from the first.  As in the introduction, a vertex ray for an integral cone in an integral cone complex $Y$ is a vertex of the corresponding polyhedron in $\P(Y)$.  

\begin{lemma}
\label{lem:specrad}
Suppose that $Y$ is an integral cone complex and that the group $G$ acts on $Y$ with property $Q$.  Let $f : Y \to Y$ be a projectively source-sink integral cone complex map with sink $v$, associated eigenvalue $\lambda$, and extended dynamical map given by $(V_i,D)$.  Suppose that $v$ does not lie on a rational ray in $V_i$.  Then the largest real eigenvalue of $D$ is equal to $\lambda$ and any corresponding positive eigenvector represents $v$.  
\end{lemma}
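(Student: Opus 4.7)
The plan is to split the statement into two parts: \textbf{(i)} $\lambda$ is a real eigenvalue of $D$ with eigenvector $v$; and \textbf{(ii)} no real eigenvalue of $D$ exceeds $\lambda$, and any positive eigenvector for $\lambda$ represents $v$. Part (i) is a short calculation: by induction on $k$, the sink lies in every subspace $W_k$ of the sink-package algorithm. The base case $v \in W_0$ holds since $v \in V_i \cap V_j$; for the inductive step, $v = \lambda^{-1} Av \in A(W_{k-1}) \cap W_{k-1} = W_k$, using that $W_{k-1}$ is a linear subspace containing $v$. Hence $v \in W$, the projection $B$ fixes $v$, and $Dv = ABv = Av = \lambda v$.

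For (ii), the plan is to combine projective source-sink dynamics with property Q to produce an open set of starting vectors whose $A$-iterates converge projectively to $v$, and then run a Perron-type spectral argument. Since $v$ does not lie on a rational ray in $V_i$, it sits in the interior of a top-dimensional cell of the subdivision $Y_f$ of $V_i$, so there is a full-dimensional open neighborhood $U_0 \subseteq V_i$ of $v$ on which $f$ agrees with the linear map $A$. Projective source-sink dynamics, backed by property Q's guarantee that vertex rays are drawn into a PL-eigenregion at $v$, implies that every $x \in U_0 \setminus \{v'\}$ (with $v'$ the source) satisfies $A^k x / \|A^k x\| \to v/\|v\|$. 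Decomposing such an $x$ along the generalized eigenspaces of $A$, this projective convergence forces $\lambda$ to strictly dominate in modulus every other eigenvalue of $A$ with nonzero coefficient in $x$; since $x$ ranges over a full-dimensional open set of $\mathbb{R}^d$, $\lambda$ must strictly dominate the entire spectrum of $A$, and its eigenspace must be one-dimensional and spanned by $v$. Because $D$ and $A|_W$ share the same nonzero spectrum, $\lambda$ is the largest real eigenvalue of $D$, and any positive $\lambda$-eigenvector of $D$ is a positive multiple of $v$.

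The main obstacle I anticipate is the careful spectral bookkeeping in the presence of possible complex eigenvalue pairs of modulus $\lambda$ or Jordan blocks at $\lambda$: each of these would cause projective iteration to rotate or drift rather than converge to a single direction, and ruling them out requires combining the openness of $U_0$ with the strict (rather than merely semi-stable) projective attraction supplied by source-sink. This is also where the hypothesis that $v$ is not on a rational ray is used: without it, $v$ could be forced onto a rational, lower-dimensional spectral subspace where the open-set argument would lose its dimensional bite.
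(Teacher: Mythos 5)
Your part (i) --- the induction showing $v \in W_k$ for all $k$, hence $Bv = v$ and $Dv = Av = \lambda v$ --- is correct and fills in a step the paper only asserts. The problem is in part (ii), where there are two genuine gaps.

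First, ``$v$ does not lie on a rational ray'' does not imply that $v$ lies in the interior of a top-dimensional cell of $Y_f$. The faces of the cells are cut out by integer linear equations, i.e.\ they lie in rational hyperplanes, but a rational hyperplane contains many irrational rays (e.g.\ the ray through $(1,\sqrt{2},0)$ lies in $\{z=0\}$). So your full-dimensional neighborhood $U_0$ of $v$ on which $f$ is given by a single matrix $A$ need not exist. The hypothesis only buys the weaker conclusion that $v$ is not a vertex ray (extreme ray) of the eigenregion $U$, and that is all the paper uses it for.

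Second, and more seriously, the claim that $A^k x/\|A^k x\| \to v/\|v\|$ for all $x$ in a full-dimensional open set does not follow from projective source-sink dynamics. Source-sink dynamics controls $f^k(x)$, and $f^k(x) = A^k x$ only as long as the orbit stays in a region where $f$ is given by $A$. For a generic $x$ near $v$, already $f(x)$ lies in $V_j$, where $f$ may be represented by a different matrix, so $f^2(x) \neq A^2 x$ in general. The maximal region on which $f$-iteration agrees with $A$-iteration is the invariant cone $W \cap V_i$ from the sink-package algorithm, which can have positive codimension, so your Perron argument has no full-dimensional set of test vectors to run on. The paper's proof is built precisely to dodge this: given a hypothetical real eigenvalue $\mu$ of $D$ with eigenvector $w$, it passes to the $D$-invariant plane $W' = \operatorname{span}(v,w) \subseteq W$, uses the irrationality of $v$ only to guarantee that $W' \cap U$ contains a two-dimensional cone, and then chooses a small subcone $C'$ around the ray of $v$ inside that plane with $f(C') \subseteq C'$. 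On $C'$ one genuinely has $f^k = D^k$, so source-sink convergence becomes a statement about iterates of the $2\times 2$ restriction of $D$ and forces $\mu < \lambda$. To repair your proof you would need to make the same move: restrict to an $f$-invariant cone inside $W$ (or inside a $D$-invariant plane through $v$) before invoking the dynamics.
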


\begin{proof}[Proof of Lemma~\ref{lem:specrad}]

As in the extended dynamical matrix algorithm, the extended dynamical matrix $D$ induces a linear map of a subspace $W$ of $\R^d$, and moreover $v$ lies in $W$.  Suppose that $w$ is an eigenvector of $D$ with real eigenvalue $\mu > 1$.  Then $D$ acts on the subspace $W'$ of $W$ spanned by $v$ and $w$.  Also, by the description of $D$ from the extended dynamical matrix algorithm, $w$ lies in the subspace of $\R^d$ spanned by $V_i$.  

Let $U \subseteq V_i$ be the PL-eigenregion used to determine $D$.  Since $v$ is not rational, it does not lie in a vertex ray of $U$, the intersection $W' \cap U$ contains a two-dimensional cone $C$ (for the same reason that the intersection of a line with a polytope is either a vertex or contains an open interval).   

The actions of $f$ and $D$ agree on $C$.  Since $f$ acts with projective source-sink dynamics on $Y$, we can choose a smaller cone $C'$ around the span of $v$ with the property that $f(C')$ is properly contained in $C'$.  It follows that $\mu < \lambda$, as desired.  The lemma follows.  
\end{proof}

\begin{proof}[Proof of Theorem~\ref{thm:main}] 

We begin with the first statement.  Let $\{\tau_i\}$ be a finite set of complete, transversely recurrent train tracks giving the integral cone complex structure $Y$ on $\MF(S)$, and say that $c$ is a vertex cycle of $\tau_0$.  Let $C$ be a top-dimensional linear cone for the $f$-action on $Y$ that contains $f^Q(c)$.  

We will show that $C$ shares an open subset with a Nielsen--Thurston eigenregion for $f$, namely, the region $V(\wt \nu)$ below, and that both are mapped by $f$ into the same $V(\tau_j)$.  From this it follows that the two linear actions must be equal.  In particular, there is a single $f$-linear piece containing both. This linear piece is still a Nielsen--Thurston eigenregion. The first statement of the theorem follows from this. 

To fix notation, suppose that $C \subseteq V(\tau_i)$ and that $f$ maps $C$ linearly into $V(\tau_j)$ (while our proof applies in all cases, the proof simplifies in the case $i=j$, which is the generic case; we leave this simplification to the reader).  

We claim that both $V(\tau_i)$ and $V(\tau_j)$ contain $\F_h$.   Using the basic properties of horizontality, the definition of $Q$, and the definition of $\Qdt$ with Proposition~\ref{prop:boundedslope}, we have: 
\begin{align*}
\H_f(f^Q(c)) &=  \H_f(c) + Q \\
&= \H_f(c) + \Qforce + \Qfit + \Qdt + 1 \\
&\geq \H_f(\tau_i) + \Qforce + \Qfit + 1,
\end{align*}
and similarly for $\tau_j$.  As $\Qforce + \Qfit + 1 > \Qforce$, the claim now follows from Proposition~\ref{prop:forcing}.

Let
\[
M = \max  \H_f(\tau_i) \cup \H_f(\tau_j).
\]
By the previous claim, \Cref{prop:squeeze} now gives an $f$-invariant train track $\nu$ that fully carries $\F_h$, that is carried by both $\tau_i$ and $\tau_j$, and that satisfies 
\[
\H_f(\nu) \leq M + \Qfit.
\]
We now claim that
\[
\H_f(f^Q(c)) > \H_f(\nu) + \Qforce.
\]
Indeed, let $\tau$ be the element of $\{\tau_i,\tau_j\}$ realizing $M$.  Using the inequality obtained in the last claim, the definition of $M$, and the definition of $\nu$ we have
\begin{align*}
\H_f(f^Q(c)) &\geq \max \H_f(\tau) + \Qforce + \Qfit +1 \\
&= M + \Qforce + \Qfit +1 \\ 
&\geq \H_f(\nu) + \Qforce +1,
\end{align*}
and the claim follows.

Combining the previous claim with \Cref{prop:forcing}, we obtain that $f^Q(c)$ is fully carried by some diagonal extension $\nu_1$ of $\nu$. As $f^Q(c)$ and $\nu$ are each carried by $\tau_i$, it follows from \Cref{lem:still_carry} that $\nu_1$ is also carried by $\tau_i$.  It then follows from Lemma~\ref{lem:ttnbd} that there is a diagonal extension $\wt \nu$ of $\nu_1$ that is carried by $\tau_i$ and so that the interiors of $V(\wt \nu)$ and $C$ have nonempty intersection ($\wt \nu$ is also an extension of $\nu$, hence the notation).  Let $y$ be a point in this intersection. This means in particular that $\wt \nu$ fully carries $y$. As above, our goal is to show $V(\wt \nu)$ is a Nielsen--Thurston eigenregion.

To this end, we claim $f(\wt \nu) \prec \tau_j$. First, we have $\nu \prec \tau_j$ so $f(\nu) \prec \tau_j$. Also, since $f(C)$ lies in $V(\tau_j)$ we have $f(y) \prec \tau_j$. Since $\wt \nu$ fully carries $y$, $f(\wt \nu)$ fully carries $f(y)$, and so Lemma~\ref{lem:still_carry} implies that $\tau_j$ carries the unique diagonal extension of $f(\nu)$ that fully carries $f(y)$. But this unique extension must be $f(\wt \nu)$, since $\wt \nu$ is the unique extension of $\nu$ fully carrying $y$ and since $f$ is (represented by) a homeomorphism.  

We may now conclude that $V(\wt \nu)$ is a Nielsen--Thurston eigenregion for $f$.  Indeed, $V(\wt \nu)$ contains $\F_h$  since $V(\nu)$ does, and the last claim implies that $f$ is linear on $V(\wt \nu)$.  As discussed, this completes the proof of the first statement.

The second statement follows from the first statement and Lemma~\ref{lem:specrad}.  Indeed, for the $\Mod(S)$-action on the Dehn--Thurston integral cone complex, the foliation $\F_h$ automatically satisfies the condition of not lying in a vertex ray.  The reason is that in constructing the linear pieces for $f$ as in the basic computational algorithm, the subdivisions of the cells of $Y$ are made using rational planes.  It follows that the vertices of the resulting subdivision of $\P(Y)$ are rational, which means they correspond to multicurves in $S$.  On the other hand, the unstable foliation for a pseudo-Anosov mapping class does not correspond to a multicurve.  This completes the proof.
\end{proof}

\bibliographystyle{plain}
\bibliography{ntc_pA}

\end{document}